\documentclass[11pt]{amsart}
\usepackage{lmodern}
\usepackage{amsmath, amsthm, amssymb, amsfonts}
\usepackage[normalem]{ulem}
\usepackage{hyperref}
\usepackage{enumitem} 

\usepackage{verbatim} 
\usepackage{longtable}

\usepackage{mathtools}

\usepackage{tikz}
\usetikzlibrary{decorations.pathmorphing}
\tikzset{snake it/.style={decorate, decoration=snake}}

\usepackage{caption}

\usepackage{tikz-cd}
\usetikzlibrary{arrows}

\theoremstyle{plain}
\newtheorem{thm}{Theorem}[section]
\newtheorem{cor}[thm]{Corollary}
\newtheorem{lem}[thm]{Lemma}
\newtheorem{prop}[thm]{Proposition}

\theoremstyle{definition}
\newtheorem{defn}[thm]{Definition}

\theoremstyle{remark}
\newtheorem{rmk}[thm]{Remark}

\newcommand{\BA}{{\mathbb{A}}}

\newcommand{\BC}{{\mathbb{C}}}

\newcommand{\BH}{{\mathbb{H}}}

\newcommand{\BP}{{\mathbb{P}}}
\newcommand{\BQ}{{\mathbb{Q}}}

\newcommand{\BZ}{{\mathbb{Z}}}

\newcommand{\CC}{{\mathcal C}}

\newcommand{\CI}{{\mathcal I}}

\newcommand{\CN}{{\mathcal N}}
\newcommand{\CO}{{\mathcal O}}
\newcommand{\CP}{{\mathcal P}}
\newcommand{\CQ}{{\mathcal Q}}

\newcommand{\Fd}{{\mathfrak{d}}}

\newcommand{\Ff}{{\mathfrak{f}}}
\newcommand{\Fg}{{\mathfrak{g}}}
\newcommand{\Fh}{{\mathfrak{h}}}

\newcommand{\Fk}{{\mathfrak{k}}}

\newcommand{\Fn}{{\mathfrak{n}}}

\newcommand{\Fp}{{\mathfrak{p}}}
\newcommand{\Fq}{{\mathfrak{q}}}

\newcommand{\ch}{{\mathrm{ch}}}

\DeclareFontFamily{OT1}{rsfs}{}
\DeclareFontShape{OT1}{rsfs}{n}{it}{<-> rsfs10}{}
\DeclareMathAlphabet{\curly}{OT1}{rsfs}{n}{it}

\newcommand\Id{\operatorname{Id}}

\usepackage{tikz}
\usepackage{lmodern}
\usetikzlibrary{decorations.pathmorphing}

\title[Multiplicativity of perverse filtrations]{Multiplicativity of perverse filtration for Hilbert schemes of fibered surfaces, II}
\author{Zili Zhang}
\address{University of Michigan, Department of Mathematics}
\email{ziliz@umich.edu}
\date{\today}
\begin{document}
\begin{abstract}
Let $S\to C$ be a smooth quasi-projective surface properly fibered onto a smooth curve. We prove that the multiplicativity of the perverse filtration on $H^*(S^{[n]},\BQ)$ associated with the natural map $S^{[n]}\to C^{(n)}$ implies that $S\to C$ is an elliptic fibration. The converse is also true when $S\to C$ is a Hitchin-type elliptic fibration.
\end{abstract}

\maketitle
\tableofcontents

\section{Introduction}
Let $f:X\to Y$ be a proper morphism between smooth complex algebraic varieties. There is an increasing filtration,
\[
P_0H^d(X,\BQ)\subset P_1H^d(X,\BQ)\subset\cdots \subset H^d(X,\BQ),
\]
called the perverse filtration associated with $f$. It is defined by truncating the derived pushforward $Rf_*\BQ_X$ by the perverse $t$-structure on $D^b_c(Y)$, the bounded derived category of constructible sheaves on $Y$. See Section 4.1 for detailed discussions. The perverse filtration is said to be {\it multiplicative} if the cup product satisfies
\[
P_kH^d(X,\BQ)\times P_{k'}H^{d'}(X,\BQ)\xrightarrow{\cup} P_{k+k'}H^{d+d'}(X,\BQ)
\]
for any $k,k',d,d'\ge0$. The purpose of this paper is to study the multiplicativity of the perverse filtration associated with the natural projection from Hilbert schemes of points on fibered surfaces to certain naturally defined bases.

\subsection{Motivation from the P=W conjecture}
Our motivation of studying the multiplicativity of perverse filtration arises from the $P=W$ conjecture. Let $C$ be a smooth projective curve of genus at least $2$. There are two moduli spaces which are attached to the curve $C$ and an integer $n$. They are Simpson's Dolbeault and Betti moduli spaces. The Doubeault moduli space $M_D$ parametrizes degree $0$ stable Higgs bundles of rank $n$ on $C$, and the Betti moduli space $M_B$ is the corresponding character variety. In \cite{S}, Simpson constructed a diffeomorphism between $M_D$ and $M_B$, called the nonabelian Hodge correspondence or the Simpson's correspondence. A remarkable prediction, suggested by de Cataldo, Hausel, and Migliorini asserts that under the identification 
\[
H^*(M_D,\BQ)=H^*(M_B,\BQ),
\]
induced by the pullback of the Simpson's diffeomorphism, the perverse filtration on $M_D$ associated with the Hitchin fibration matches the halved Hodge-theoretic weight filtration on $M_B$, i.e.
\[
P_kH^*(M_D,\BQ)=W_{2k}H^*(M_B,\BQ)=W_{2k+1}H^*(M_B,\BQ),~~k\ge0.
\]
Such a phenomenon is referred to as the ``$P=W$ conjecture''. It was proved in \cite{dCHM} in the case of $n=2$ and $g\ge 2$, and recently in \cite{dCMS} for arbitrary $n$ and $g=2$. Since the Hodge-theoretic weight filtration is always multiplicative, the multiplicativity of the perverse filtration associated with the Hitchin map is strong evidence to support the $P=W$ conjecture. In fact, it is proved that the $P=W$ conjecture is equivalent to the existence of such a multiplicativity, \cite[Theorem 0.6]{dCMS}.

There is a parabolic version of the Simpson's correspondence and hence a parabolic version of the $P=W$ conjecture. The parabolic $P=W$ conjecture is proved for five families of parabolic moduli spaces indexed by affine Dynkin diagrams and the rank $n$ in \cite{SZ} and \cite{Z}. In this setting, each Dolbeault moduli space $M_D$ is of the form  $S^{[n]}$, a Hilbert scheme of $n$ points on smooth elliptically fibered surface $f:S\to\BA^1$. The corresponding Hitchin maps $\pi_n:S^{[n]}\to\BA^n$ are constructed as the composition of the Hilbert-Chow morphism $S^{[n]}\to S^{(n)}$ and the natural projection $S^{(n)}\to (\BA^1)^{(n)}$, where $S^{(n)}$ denotes the $n$-th symmetric product of $S$. In fact, the multiplicativity of perverse filtration argument in the proof works in the following generality.

\begin{thm}{\cite[Theorem 4.18]{Z}}
Let $f:S\to C$ be a surjective morphism from a smooth projective surface with numerically trivial canonical bundle to a smooth projective curve. Then the perverse filtration associated with the morphism $S^{[n]}\to C^{(n)}$ is multiplicative. 
\end{thm}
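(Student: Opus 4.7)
The plan is to combine the Nakajima--Grojnowski description of the total cohomology $\BH:=\bigoplus_{n\ge 0}H^*(S^{[n]},\BQ)$ as an irreducible Fock representation of the Heisenberg algebra modeled on $H^*(S,\BQ)$ with Lehn's formula for the cup product on Hilbert schemes. The essential simplification afforded by the hypothesis is that, by the Enriques--Kodaira classification, a smooth projective surface with numerically trivial canonical bundle is abelian, K3, Enriques, or bielliptic, and in all four cases $K_S$ is torsion in $\Pic(S)$, so $c_1(K_S)=0$ in $H^2(S,\BQ)$. The correction terms in Lehn's formula that are linear in $c_1(K_S)$ therefore drop out rationally, and the cup product is expressible purely in terms of normal-ordered Nakajima operators and diagonal-restriction maps.

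The first main step is to identify the perverse filtration from $\pi_n$ in Nakajima-theoretic terms. By adjunction $K_S\equiv 0$ forces the general fiber of $f$ to be an elliptic curve, so $f:S\to C$ is an elliptic fibration, and the decomposition theorem yields $Rf_*\BQ_S[1]\simeq\bigoplus_i\CP_i[-i]$, assigning every $\alpha\in H^*(S,\BQ)$ a perverse degree $p(\alpha)$. The perverse filtration on $H^*(S^{(n)},\BQ)$ associated with $f^{(n)}:S^{(n)}\to C^{(n)}$ is the $\mathfrak{S}_n$-invariant part of the symmetric tensor filtration on $H^*(S,\BQ)^{\otimes n}$, by the formal identification $R(f^{(n)})_*\BQ\simeq\mathrm{Sym}^n(Rf_*\BQ)$. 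The Hilbert--Chow morphism $\mu:S^{[n]}\to S^{(n)}$ is semismall, so $R\mu_*\BQ_{S^{[n]}}[\dim S^{[n]}]$ is a perverse sheaf that splits as a sum of intersection cohomology sheaves supported on the symmetric-product strata $S^{(\lambda)}$ indexed by partitions $\lambda=(i_1,\dots,i_r)$ of $n$, with the $\lambda$-stratum contributing a perverse shift $\sum_j(i_j-1)$. Composing these two identifications, I would show that $P_kH^*(S^{[n]},\BQ)$ is spanned by Nakajima monomials $\mathfrak{q}_{i_1}(\alpha_1)\cdots\mathfrak{q}_{i_r}(\alpha_r)|0\rangle$ with $\sum_j\bigl(p(\alpha_j)+i_j-1\bigr)\le k$.

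With this combinatorial description in hand, multiplicativity reduces to verifying that cup product operators on $\BH$ preserve the Nakajima filtration additively. Here I would apply Lehn's theorem, which realizes cup product by any tautological class on $S^{[n]}$ as an explicit polynomial in Nakajima operators whose only corrections are linear in $c_1(K_S)$; numerical triviality of $K_S$ forces those corrections to vanish rationally, leaving a normal-ordered Nakajima expression in which every factor shifts perverse degree by exactly the amount built into the definition above. Since tautological classes generate $H^*(S^{[n]},\BQ)$ as a ring, checking the claim on generators suffices. The principal obstacle is the final degree-bookkeeping: one must verify that every operator appearing in the collapsed Lehn formula, including the Virasoro-type derivations obtained from iterated commutators of Nakajima operators, shifts the perverse degree additively as predicted. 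This ultimately reduces to a compatibility between $p$ and the intersection pairing on $S$, which is essentially automatic for an elliptic fibration of a surface because the relevant filtration on $H^*(S,\BQ)$ has only three nontrivial steps.
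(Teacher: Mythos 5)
Your route is the same one the paper (and its predecessor \cite{Z}, where this theorem is actually proved) takes: identify the perverse filtration on $H^*(S^{[n]},\BQ)$ with the Nakajima-monomial description coming from the symmetric products (Theorem \ref{505}, Proposition \ref{4.1}), then use Lehn's formula, noting that numerical triviality gives $c_1(K_S)=0$ in $H^2(S,\BQ)$ so the correction term $\binom{n}{2}\Fq_n(K_S\alpha)$ in $[\partial,\Fq_n(\alpha)]$ drops out, and reduce to degree bookkeeping on the tautological generators of Li--Qin--Wang. The problem is that the step you defer as ``final degree-bookkeeping'' is precisely where the mathematical content lies, and your justification for it is not valid. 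Filtration-level upper bounds on generators do not imply multiplicativity of the filtration: a class of perversity $m$ may only be expressible as a sum of tautological monomials each of formal degree larger than $m$ (with cancellation), and then bounding each summand's product with $x$ gives nothing; the paper flags exactly this obstruction in the remark after Theorem \ref{pie}. The remedy used here is to fix a splitting $G_\bullet H^*(S,\BQ)$ of the perverse filtration satisfying the G-decomposition axioms (duality with the compactly supported side) and being \emph{strongly multiplicative}, and then to prove exact purity statements for the operators: $L_n(\alpha)$ of degree $(n,d+2n,k+n)$, $\partial$ of degree $(0,2,1)$, and cupping with $\alpha^{[n]}_l$ of degree $(0,d+2l-4,k+l-2)$ (Propositions \ref{4.3}, \ref{4.7}, \ref{5.1}, \ref{5.2}), from which strong multiplicativity on $S^{[n]}$ and hence multiplicativity of the filtration follow (Theorem \ref{6.2}).

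Your claim that this compatibility is ``essentially automatic'' because the filtration on $H^*(S,\BQ)$ has only three steps is false in nearby generality: already the exact degree shift of $L_n(\alpha)$ requires the chosen splitting of $H^*(S,\BQ)$ to be strongly multiplicative (one must control the classes $\beta^i\alpha$ appearing in $\Delta_+(\alpha)$ with respect to the splitting, not just the filtration), and Theorem \ref{star} of this paper shows that strongly multiplicative splittings can fail to exist for surface fibrations (Hitchin-type fibrations whose central fiber has two or more components of positive geometric genus). What saves the projective case with $K_S$ numerically trivial is a separate argument for the existence of such a splitting of $H^*(S,\BQ)$ --- for instance $H^1(S,\BQ)=0$ for elliptic K3 and Enriques surfaces, and isotriviality for abelian and bielliptic elliptic fibrations --- an ingredient your proposal neither identifies nor supplies. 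With that input made explicit (and with the duality axiom used to choose dual adapted bases, which is what actually underlies your appeal to ``compatibility with the intersection pairing''), your outline does match the paper's argument; without it, the proof is incomplete at its decisive step.
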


It is natural to ask whether the same result holds for surfaces with non-trivial canonical bundle. In this paper, we will show how the multiplicativity of the perverse decomposition on the Hilbert scheme $H^*(S^{[n]},\BQ)$ is governed by the geometry of the fibered surface.

\subsection{Hilbert schemes of Hitchin-type fibrations}
Let $f:S\to \BA^1$ be a proper surjective morphism from a connected smooth quasi-projective surface to the affine line, such that the restriction map 
\[
H^*(S,\BQ)\to H^*(f^{-1}(0),\BQ)
\]
is an isomorphism. Such a morphism is called a \emph{Hitchin-type fibration}. The perverse filtration associated with  $f$ is a 2-step filtration
\[
P_0H^*(S,\BQ)\subset P_1H^*(S,\BQ)\subset P_2H^*(S,\BQ)=H^*(S,\BQ),
\]
which is always multiplicative, \cite[Proposition 4.17]{Z}. A decomposition $G_\bullet H^*(S,\BQ)$ which splits the filtration $P_\bullet H^*(S,\BQ)$, \emph{i.e.}
\[
P_kH^*(S,\BQ)=\bigoplus_{i=0}^k G_iH^*(S,\BQ),~~k=0,1,2,
\]
canonically induces a decomposition
\[
H^*(S^{[n]},\BQ)=\bigoplus_{i=0}^{2n}G_iH^*(S^{[n]},\BQ)
\]
which splits the perverse filtration associated with the natural map $\pi_n:S^{[n]}\to \BA^n$. A splitting $G_\bullet H^*(S^{[n]},\BQ)$ is called \emph{strongly multiplicative} if
\[
G_kH^d(S^{[n]},\BQ)\times G_{k'}H^{d'}(S^{[n]},\BQ)\xrightarrow{\cup}G_{k+k'}H^{d+d'}(S^{[n]},\BQ)
\]
for any $d,d',k,k'\ge0$. We see from the definition that the perverse filtration associated with $\pi_n$ is multiplicative if it admits a strongly multiplicative splitting. For a given perverse filtration,  we are free to choose a good splitting adapted to the problem we would like to solve. This idea is frequently used in the study of perverse filtrations related to the $P=W$ conjecture; see \cite[Section 3]{dCMS}, \cite[Section 3]{SZ}. Our main theorem is:

\begin{thm}[Theorem \ref{4.5*}]\label{0.0}
Let $n\ge2$. Let $f:S\to\BA^1$ be a Hitchin-type fibration. Let $\pi_n:S^{[n]}\to \BA^n$ be the induced map. Then the following are equivalent.
\begin{enumerate}
    \item The perverse filtration associated with $\pi_n$ is multiplicative.
    \item The perverse filtration associated with $\pi_n$ admits a strongly multiplicative splitting.
    \item The canonical class $K_S\in P_1H^2(S,\BQ)$.
    \item $K_S\in G_1H^2(S,\BQ)$ in any splitting of $P_\bullet H^*(S,\BQ)$.
    \item $f:S\to \BA^1$ is an elliptic fibration.
\end{enumerate}
\end{thm}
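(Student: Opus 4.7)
The plan is to close the cycle $(2) \Rightarrow (1) \Rightarrow (3) \Rightarrow (5) \Rightarrow (2)$ and separately establish $(3) \Leftrightarrow (4)$. The implications $(2) \Rightarrow (1)$ and $(4) \Rightarrow (3)$ are immediate from the definitions.

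For $(3) \Leftrightarrow (4)$: for a Hitchin-type fibration, ${}^p\CH^{-1}(Rf_*\BQ_S[2]) \simeq \BQ_{\BA^1}[1]$ since the general fiber is connected, so $P_0 H^2(S, \BQ)$ is built from $H^{\geq 1}(\BA^1, \BQ) = 0$ and hence vanishes. Thus in any splitting a class in $P_1 H^2$ automatically lies in $G_1 H^2$. For $(3) \Leftrightarrow (5)$: the graded piece $P_2 H^2(S)/P_1 H^2(S)$ is captured by the global sections of $R^2 f_* \BQ$ on the smooth locus $\BA^1_{\mathrm{sm}} \subset \BA^1$, and the image of $\alpha \in H^2(S)$ in this quotient is recorded by the intersection number $\alpha \cdot F$ with a general fiber $F$. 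The adjunction formula yields $K_S \cdot F = 2g(F) - 2$, which vanishes iff $g(F) = 1$, i.e., iff $f$ is elliptic.

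For $(5) \Rightarrow (2)$: we construct a splitting by declaring $\Fq_{i_1}(\alpha_1) \cdots \Fq_{i_l}(\alpha_l)|0\rangle$, for $\alpha_j \in G_{k_j} H^*(S)$, to lie in $G_{\sum_j (k_j + i_j - 1)} H^*(S^{[n]}, \BQ)$. The compatibility of this grading with the perverse filtration for $\pi_n$ is the standard correspondence between the Nakajima description of $H^*(S^{[n]})$ and the perverse decomposition. Strong multiplicativity is then verified by inspecting Lehn's formula for the cup product on Hilbert schemes, whose only structurally relevant extra inputs are the tangent classes $c_1(TS) = -K_S$ and $c_2(TS)$. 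Once $(5) \Rightarrow (3) \Rightarrow (4)$ supplies $K_S \in G_1 H^2(S)$, the adaptation of \cite[Theorem 4.18]{Z} to the quasi-projective setting runs through.

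The main technical step is $(1) \Rightarrow (3)$, which we approach on $S^{[2]}$. The half-boundary $\delta_2 = \tfrac12 [E] \in H^2(S^{[2]})$ has Nakajima expression, up to a nonzero scalar, equal to $\Fq_2(1)|0\rangle$, of perverse degree $0 + (2-1) = 1$. By Lehn's derivation formula, the cup square $\delta_2 \cdot \delta_2 \in H^4(S^{[2]}, \BQ)$ expands in the Nakajima basis with a nonzero term proportional to $\Fq_2(K_S)|0\rangle$, whose perverse degree is one greater than that of $K_S$. If multiplicativity holds then $\delta_2^2 \in P_2 H^4(S^{[2]})$, and extracting this Nakajima component forces $K_S \in P_1 H^2(S)$. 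For $n \geq 3$ we either pull back the $n=2$ conclusion along the natural incidence maps $S^{[2]} \to S^{[n]}$ obtained by adjoining $n-2$ reduced points in a fixed generic fiber, or iterate Lehn's derivation calculation directly on $S^{[n]}$. The main obstacle will be identifying the coefficient of $\Fq_2(K_S)|0\rangle$ in $\delta_2^2$ with sufficient precision to isolate it from the Heisenberg-type correction terms, and ensuring that the Nakajima-indexed perverse degrees exactly match the perverse filtration associated to $\pi_n$ in the quasi-projective Hitchin-type setting.
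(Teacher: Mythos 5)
Your overall architecture coincides with the paper's: the same cycle (elliptic $\Rightarrow$ strongly multiplicative splitting $\Rightarrow$ multiplicative $\Rightarrow$ $K_S\in P_1$ $\Rightarrow$ elliptic), the same observation that $P_0H^2(S,\BQ)=0$ gives (3)$\Leftrightarrow$(4), and the same key mechanism for (1)$\Rightarrow$(3): the self-intersection of the boundary class expanded via the commutator $[\partial,\Fq_n(\alpha)]=nL_n(\alpha)+\binom{n}{2}\Fq_n(K_S\alpha)$. The ``obstacle'' you flag there is resolved exactly as in the paper: the Virasoro terms $L_1(1),L_2(1)$ have perverse degree computable with no multiplicativity hypothesis and land in $P_2H^4(S^{[n]})$, the $\Fq_2(K_S)$ term sits in a different summand of the G\"ottsche--Soergel decomposition and can be isolated, and the identification of Nakajima-indexed degrees with the perverse filtration of $\pi_n$ is \cite[Corollary 4.14]{Z}. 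Do drop the incidence-map variant for $n\ge3$: there is no morphism $S^{[2]}\to S^{[n]}$ of the required kind on all of $S^{[2]}$, and multiplicativity for $\pi_n$ does not formally descend to $\pi_2$; your alternative (running the boundary computation directly on $S^{[n]}$, one $n$ at a time) is the correct route and is what the paper does.

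The genuine gap is in (5)$\Rightarrow$(2). Declaring $\Fq_{i_1}(\alpha_1)\cdots\Fq_{i_l}(\alpha_l)\mathbf{1}$ to have degree $\sum_j(k_j+i_j-1)$ presupposes a splitting $G_\bullet H^*(S,\BQ)$ of the perverse filtration on the surface itself, and the machinery that makes ``Lehn's formula plus $K_S\in G_1H^2(S)$'' yield strong multiplicativity upstairs (the degree of $L_n(\alpha)$ for general $\alpha$, hence the purity of the tautological classes $\alpha^{[n]}_l$) requires that this surface-level splitting be \emph{strongly multiplicative}. For a Hitchin-type fibration this is not automatic: all of $H^1(S)$ has $G$-degree $1$, so one must choose the one-dimensional complement $G_2H^2(S)$ of $P_1H^2(S)$ so that it contains the entire image of the cup product $H^1\times H^1\to H^2$; such a choice exists if and only if $\dim\mathrm{Im}\,c\le 1$ and $\mathrm{Im}\,c\cap P_1H^2(S)=0$, which the paper translates (via the identification $P_1H^2(S)=\mathrm{Im}\,cl$, Zariski's lemma, and passage to normalizations of the fiber components) into the condition that $f^{-1}(0)$ has at most one irreducible component of positive geometric genus. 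Ellipticity guarantees this (all Kodaira fiber components are rational except possibly one), and only then does the induction to $S^{[n]}$ run. Your appeal to ``the adaptation of \cite[Theorem 4.18]{Z}'' skips this step; in \cite{Z} the surfaces are projective with numerically trivial canonical class, and the quasi-projective Hitchin-type case genuinely needs the extra argument, since for a general Hitchin-type fibration the two-step filtration admits no strongly multiplicative splitting at all.
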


When $n=1$, the equivalent conditions break into three groups. Condition (1) is always true, \cite[Proposition 4.17]{Z}. For condition (2), we find the following topological condition to detect the existence of a strongly multiplicative splitting.

\begin{thm}[Theorem \ref{star}]
Let $f:S\to \BA^1$ be a Hitchin-type fibration. Then the perverse filtration associated with $f$ admits a strongly multiplicative splitting if and only if  $f^{-1}(0)$ has at most 1 irreducible components of positive geometric genus.
\end{thm}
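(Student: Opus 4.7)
The plan is to translate the equivalence into concrete statements about the projective central fiber $F_0 = f^{-1}(0)$ via the Hitchin-type identification $H^*(S,\BQ) \cong H^*(F_0,\BQ)$, and then to carry out a dimension count on the cup-product image in $H^2$. Write $F_0 = \bigcup_{i=1}^v C_i$ for the irreducible decomposition, with geometric genera $g_i$, and set $s = \#\{i : g_i > 0\}$.

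I would begin by describing $P_\bullet H^*(S)$ explicitly. Applying the decomposition theorem to $f$ and using the Hitchin-type hypothesis, one obtains ${}^p\CH^{\pm 1}(Rf_*\BQ_S[2]) \cong \BQ_{\BA^1}[1]$, while ${}^p\CH^0$ is the IC-extension of $R^1 f_*\BQ_S|_{\BA^1\setminus\{0\}}$ together with a skyscraper summand $\BQ_0^{v-1}$ capturing the extra components of $F_0$. Combined with $H^{>2}(S) = 0$, this yields $P_0 = \BQ\cdot 1 \subset H^0$, $P_1 H^1 = H^1(S)$, and $P_1 H^2 \subset H^2(F_0)$ equal to the $(v-1)$-dimensional kernel of the degree functional $\int\colon H^2(F_0) \to \BQ$ (pairing with the fundamental class of $F_0$, sending each $[C_i]$ to $1$); in particular every $[C_i]$ lies outside $P_1 H^2$, and $\mathrm{gr}^P_2 H^2$ is $1$-dimensional. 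Next I would compute the cup product image $V := H^1(F_0)\cup H^1(F_0) \subset H^2(F_0)$ using the mixed Hodge structure on $F_0$: since $H^2(F_0)$ is pure of weight $2$ while $H^1(F_0)$ carries weights $0$ and $1$, only products of pure classes can be nonzero; these factor through $\bigoplus_i H^1(\widetilde C_i)$ via the normalization $\nu\colon \widetilde F_0 \to F_0$, cross-terms between different components vanish by support, and on each $\widetilde C_i$ the cup product surjects onto $H^2(\widetilde C_i) = \BQ$ iff $g_i > 0$. Pushing forward by $\nu_*$, one obtains $V = \bigoplus_{g_i > 0}\BQ[C_i]$, of dimension exactly $s$.

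Both directions then follow by dimension count. For $(\Rightarrow)$, any splitting satisfies $G_1 \cap H^1 = H^1$ while $G_2 \cap H^2$ is a $1$-dimensional complement of $P_1 H^2$; strong multiplicativity $G_1\cdot G_1 \subset G_2$ forces the $s$-dimensional $V$ to embed into this $1$-dimensional space, so $s \le 1$. For $(\Leftarrow)$, when $s = 1$ take $G_2 \cap H^2 := V = \BQ[C_{i_0}]$, which is a valid complement of $P_1 H^2$ since $[C_{i_0}] \notin P_1 H^2$; when $s = 0$ any splitting suffices. All other containments $G_i\cdot G_j \subset G_{i+j}$ hold automatically since the relevant targets sit in $H^{>2}(S) = 0$.

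The main technical obstacle is the first step: pinning down $P_1 H^2$ as $\ker(\int)$ and verifying that no component class $[C_i]$ lies inside it. This amounts to matching the ${}^p\CH^1 \cong \BQ_{\BA^1}[1]$ summand with the "generic fiber $H^2$" contribution to $R^2 f_*\BQ_S$; under the Hitchin-type isomorphism $\mathrm{gr}^P_2 H^2$ is then detected by pairing against the fundamental class of a fiber (equivalently, by intersection with a section of $f$), while $P_1 H^2$ is exactly the kernel of that pairing. Since each $[C_i] \in H^2(F_0)$ pairs nontrivially with the class of any section through a point of $C_i$, it sits outside $P_1 H^2$, which is the key geometric input the whole argument hinges on.
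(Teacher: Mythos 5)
Your overall strategy is essentially the paper's: identify $H^*(S,\BQ)$ with $H^*(f^{-1}(0),\BQ)$, note that since $H^{\ge3}(S)=0$ and $G_1H^1=H^1(S)$ in any splitting, strong multiplicativity reduces to fitting the image $V$ of $H^1\cup H^1$ into a one-dimensional complement of $P_1H^2$, and compute $V=\bigoplus_{g_i>0}\BQ\,[C_i]$ through the normalization. Your weight-theoretic computation of $V$ replaces the paper's Lemma \ref{mv} (restriction to the normalization is an isomorphism on $H^2$ and surjective on $H^1$, proved via intersection cohomology of curves), and both routes give Proposition \ref{snc}; note you do implicitly need the surjectivity of $H^1(F_0)\to H^1(\widetilde C_i)$ to conclude that each $[C_i]$ with $g_i>0$ is actually hit. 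The real divergence is how $P_1H^2$ is pinned down: the paper uses $P_1H^2(S)=\mathrm{Im}\bigl(H^2_c(S)\to H^2(S)\bigr)$ and Zariski's lemma for the intersection matrix of the central fiber (Proposition \ref{cl}), while you restrict to a generic fiber.

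That route is legitimate, but your explicit description of the functional is off when the central fiber is non-reduced: the kernel description of $P_1H^2$ comes from pairing with the cycle-theoretic fiber $\sum_i b_i[C_i]$, so the component class $[C_i]\in H^2(F_0)$ is sent to the multiplicity $b_i$, not to $1$; equivalently, as in Proposition \ref{cl}, $P_1H^2=\{\sum_ic_i[C_i]\mid\sum_ib_ic_i=0\}$. Likewise the aside that $\mathrm{gr}^P_2H^2$ is detected ``by intersection with a section of $f$'' is not available in general: a section need not exist, and a section class lives in Borel--Moore homology, which pairs with $H^2_c(S)$ rather than with $H^2(S)$, so this is not the right justification for the key input. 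Fortunately, the only two consequences your argument actually uses --- that $P_1H^2$ has codimension one in $H^2$, and that every $[C_i]$ lies outside $P_1H^2$ --- survive with the corrected weights because $b_i>0$ for all $i$; with that repair (or by quoting the paper's Zariski-lemma computation), both directions of your dimension count go through and the proof is sound.
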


The conditions (3), (4), and (5) are still equivalent for $n=1$. Moreover, the equivalence between (3) and (5) holds for general fibered surfaces.

\begin{prop}[Proposition \ref{pf2}]
Let $f:S\to C$ be a proper surjective map from a quasi-projective surface to a smooth curve. Then $K_S\in P_1H^2(S,\BQ)$ if and only if $f$ is an elliptic fibration.
\end{prop}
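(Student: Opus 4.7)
The plan is to reduce both implications to the computation of the image of $K_S$ in the top graded piece $P_2/P_1$ of the three-step perverse filtration on $H^2(S,\BQ)$, and then to evaluate that image via the adjunction formula on a general fiber.

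By the BBD decomposition theorem applied to the proper surjective map $f:S\to C$ of relative dimension one,
\[
Rf_*\BQ_S[1]\;\cong\;\bigoplus_{i=0}^{2}{}^p\CH^i(Rf_*\BQ_S[1])[-i],
\]
supported in three perverse degrees, producing the filtration $P_0\subset P_1\subset P_2=H^2(S,\BQ)$ and the identification $P_2/P_1\cong H^{-1}(C,{}^p\CH^2(Rf_*\BQ_S[1]))$. Let $F$ be a general smooth fiber of $f$ and let $g$ denote its genus. On the smooth locus $C^\circ\subset C$ of $f$, the generic stalk of ${}^p\CH^2$ is $\BQ_{C^\circ}[1]$, whose contribution to $P_2/P_1$ is $H^{-1}(C,\BQ_C[1])=H^0(C,\BQ)\cong\BQ$, while any skyscraper summands in ${}^p\CH^2$ have vanishing $H^{-1}$ and do not contribute. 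Hence $P_2/P_1\cong\BQ$.

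The next step is to identify the quotient map $H^2(S,\BQ)\twoheadrightarrow P_2/P_1\cong\BQ$ with the restriction-to-fiber map $\alpha\mapsto\alpha\cdot F$, up to a nonzero scalar. Indeed, by proper base change, the restriction of ${}^p\tau_{\le 1}Rf_*\BQ_S[1]$ to a smooth point $t\in C^\circ$ yields the subcomplex of $H^*(F_t,\BQ)$ consisting of $H^0$ and $H^1$, omitting $H^2(F_t)$; consequently $P_1H^2(S,\BQ)\subset\ker(H^2(S,\BQ)\to H^2(F,\BQ))$. The induced map $P_2/P_1\to H^2(F,\BQ)\cong\BQ$ is nonzero (pair any $f$-ample class with $F$), hence an isomorphism. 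Now since distinct fibers of $f$ are disjoint one has $F\cdot F=0$, so by adjunction $K_S|_F=K_F$, giving
\[
K_S\cdot F\;=\;\deg K_F\;=\;2g-2.
\]
Therefore $K_S\in P_1H^2(S,\BQ)$ if and only if $g=1$, i.e., if and only if $f$ is an elliptic fibration.

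The main technical obstacle is the identification of the projection onto $P_2/P_1$ with the fiber-restriction map, which rests on the compatibility of perverse truncation with proper base change at the smooth points of $f$. A subsidiary point is the treatment of a potentially disconnected generic fiber: this is resolved by first applying Stein factorization to replace $f$ with a map having connected fibers over a finite cover of $C$, after which the same argument applies.
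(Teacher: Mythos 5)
Your argument is correct and follows essentially the same route as the paper: both use the decomposition theorem together with proper base change to show that $P_1H^2(S,\BQ)$ is precisely the kernel of restriction to a general fiber $F$ (equivalently, that $\mathrm{Gr}_2^PH^2(S,\BQ)\cong\BQ$ is detected by degree on $F$), and then the adjunction formula with the trivial normal bundle $\CN_{F/S}$ to get $K_S\cdot F=2g-2$. The only cosmetic differences are that you treat both implications uniformly and add the Stein-factorization remark for disconnected fibers, whereas the paper argues the two directions separately, quoting the explicit decomposition of $Rf_*\BQ_S[1]$ from de Cataldo--Migliorini.
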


For general fibered surfaces $f:S\to C$ and $n\ge 2$, we have the following necessary condition for the multiplicativity of the perverse filtration associated with $\pi_n$, \emph{i.e.} (1)$\Rightarrow$(5), which is also sufficient for $n=2$.

\begin{thm}[Theorem \ref{pie}, \ref{n=2}]
Let $f:S\to C$ be a proper surjective morphism from a smooth quasi-projective to a smooth curve. Let $\pi_n:S^{[n]}\to C^{(n)}$ be the induced morphism. Then
\begin{enumerate}
    \item The perverse filtration associated with $\pi_1$ is always multiplicative.
    \item Let $n\ge 2$. If the perverse filtration associated with $\pi_n$ is multiplicative, then $f$ is an elliptic fibration. The converse is true for $n=2$.
\end{enumerate}
\end{thm}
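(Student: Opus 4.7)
\emph{Part (1).} Since $\pi_1=f:S\to C$ has relative dimension $1$, the perverse filtration on $H^*(S,\BQ)$ has at most the three steps $P_0\subset P_1\subset P_2=H^*(S,\BQ)$. For a filtration of this length, multiplicativity reduces to an elementary check, carried out in \cite[Proposition 4.17]{Z} in the Hitchin-type case; the argument there extends with little change to an arbitrary smooth curve base $C$, so I would simply cite it and record the extension.

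\emph{Part (2), necessity.} Suppose $\pi_n$ has multiplicative perverse filtration for some $n\ge 2$. By Proposition \ref{pf2} it suffices to prove $K_S\in P_1 H^2(S,\BQ)$. The plan is to localize on $C$. For each closed point $c\in C$, I would choose a contractible analytic neighborhood $U\ni c$; then $f^{-1}(U)\to U$ is (analytically) a Hitchin-type fibration, and the restricted map $\pi_n^U:f^{-1}(U)^{[n]}\to U^{(n)}$ inherits multiplicativity from $\pi_n$, since perverse truncation functors commute with restriction to the analytic open $U^{(n)}\subset C^{(n)}$ and cup product is compatible with pullback. Theorem \ref{0.0} $(1)\Rightarrow(5)$ then forces $f$ to be elliptic in a neighborhood of $c$; as $c$ is arbitrary, $f$ is globally an elliptic fibration.

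\emph{Part (2), converse at $n=2$.} Assume $f$ is an elliptic fibration. The analogous localization argument, combined with Theorem \ref{0.0} $(5)\Rightarrow(1)$, produces multiplicativity of each local $\pi_2^U$. To globalize, I would use the standard description $S^{[2]}=\operatorname{Bl}_\Delta S^{(2)}$ and the resulting decomposition
\[
H^*(S^{[2]},\BQ)=\operatorname{Sym}^2 H^*(S,\BQ)\ \oplus\ \delta\cdot H^{*-2}(S,\BQ),
\]
where $\delta$ is the class of the exceptional divisor. Multiplicativity on the $\operatorname{Sym}^2$-summand follows from Part (1) by applying the K\"unneth formula to $f\times f:S\times S\to C\times C$ and descending to $C^{(2)}$. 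The $\delta$-contributions are controlled by the standard blow-up intersection formulas, which express powers of $\delta$ in terms of $K_S$; the hypothesis $K_S\in P_1 H^2(S)$ (Proposition \ref{pf2}) then places these terms in the correct step of $P_\bullet$, assembling to the global multiplicativity of $\pi_2$.

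\emph{Main obstacle.} The central technical point, in both directions of Part (2), is the clean passage between global multiplicativity of $\pi_n$ and local multiplicativity on analytic opens of $C^{(n)}$, which is what allows Theorem \ref{0.0} to be leveraged. I expect this to follow from the decomposition theorem applied to $R\pi_{n*}\BQ$ together with restriction compatibility of the induced algebra structure on the $E_\infty$-page of the perverse Leray spectral sequence, but the analytic/algebraic interplay requires care to set up rigorously. The restriction to $n=2$ in the converse is essential to the proposed approach because the blow-up presentation of $S^{[n]}$ is special to $n=2$; for $n\ge 3$ a substantially finer bookkeeping of tautological classes and their perverse levels would be required, and this is where the proposed proof naturally stops short.
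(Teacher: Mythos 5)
There is a genuine gap, and it sits exactly at the step you flag as the ``main obstacle.'' Your plan transfers multiplicativity between $\pi_n$ and its restrictions over small analytic opens $U\subset C$, but neither direction of that transfer is available. Multiplicativity is a statement about cup products of \emph{all} classes on the total space, and the restriction map $H^*(S^{[n]},\BQ)\to H^*\bigl((f^{-1}U)^{[n]},\BQ\bigr)$ is in general far from surjective, so global multiplicativity says nothing about products of purely local classes (and a class of global perversity $k$ may drop perversity after restriction); conversely, the perverse filtration is defined through hypercohomology of truncations of $R\pi_{n*}\BQ$ and is not a condition that can be checked on an open cover, so local multiplicativity does not glue. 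On top of this, invoking Theorem \ref{0.0} is circular relative to the paper's logic: in the proof of Theorem \ref{4.5*} the implication ``multiplicativity of $\pi_n$ $\Rightarrow$ elliptic'' is \emph{obtained from} Theorem \ref{pie}.(2) (via Proposition \ref{pf}), i.e.\ from the very statement you are asked to prove, so it cannot be used as input here.

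The paper's actual argument for necessity is global and avoids all of this: one writes the boundary divisor as $\partial S^{[n]}=(\Fq_1(1))^{n-2}\Fq_2(1)\mathbf{1}$, computes $\partial S^{[n]}\cdot\partial S^{[n]}=-2\,\partial\,(\Fq_1(1))^{n-2}\Fq_2(1)\mathbf{1}$, and expands using Lehn's formula $[\partial,\Fq_m(\alpha)]=mL_m(\alpha)+\binom{m}{2}\Fq_m(K_S\alpha)$. The Virasoro terms lie in $P_2H^4(S^{[n]},\BQ)$ by Proposition \ref{7.5}, so multiplicativity forces $(\Fq_1(1))^{n-2}\Fq_2(K_S)\mathbf{1}\in P_2H^4$, and Theorem \ref{505} identifies its perversity as $\Fp^f(K_S)+1$; hence $\Fp^f(K_S)\le 1$, and Proposition \ref{pf2} gives ellipticity. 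For the converse at $n=2$, your blow-up sketch does point toward the paper's Section~4.4, but the substance is the explicit check of the three product types on $\mathrm{Bl}_\Delta(S\times S)$ under the identification $(\pi^*,j_*\pi_E^*)$ of the perverse filtrations, in particular the formula $j_*(\pi_E^*(\gamma\delta)\cdot\xi)=\pi^*i_*(\gamma\delta)+j_*\pi_E^*(\gamma\delta\,K_\Delta)$ coming from $c_1(\CQ)=\xi-\pi_E^*K_\Delta$, where ellipticity enters only through $\Fp^f(K_\Delta)\le 1$; no localization or appeal to the Hitchin-type theorem is used or needed. (Minor point: your reading of \cite[Proposition 4.17]{Z} as restricted to the Hitchin-type case is off --- it is stated for arbitrary surjective maps from a smooth quasi-projective surface to a smooth curve, so Part (1) is an immediate citation.)
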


For $n=1$ and $n=2$, we perform a direct computation using the explicit geometry of the fibrations and the intersection theory to study the perverse filtrations and their splittings. The canonical class $K_S$ enters the picture when we study the self-intersection of the boundary class $\partial S^{[2]}$, which does not appear when $n=1$. For general $n$, we develop a systematic framework called $G$-decomposition to study the cohomology ring $H^*(S^{[n]},\BQ)$. The $G$-decomposition captures certain important properties of the perverse decompositions associated with Hilbert schemes of points developed in \cite{SZ} and \cite{Z}. Besides the perverse filtrations, the graded pieces of the Hodge-theoretic weight filtrations under mild conditions or the Hodge filtration for smooth projective varieties also form $G$-decompositions.

\subsection{G-decomposition}
Let $X$ be a smooth quasi-projective variety of dimension $n$. Let $F$ be a field of characteristic 0. A \emph{G-decomposition of length $m$ with coefficient $F$} is a pair of decompositions 
\[
H^*(X,F)=\bigoplus_{i=0}^mG_i H^*(X,F) \textrm{ and } H^*_c(X,F)=\bigoplus_{i=0}^mG'_i H^*_c(X,F)
\] 
such that the following properties hold.
\begin{enumerate}
    \item $1\in G_0H^0(X,F)$.
    \item The forgetful map $H^*_c(X,F)\to H^*(X,F)$ is compatible with the decompositions $G$ and $G'$, \emph{i.e.} $G'_kH^*_c(X,F)\to G_kH^*(X,F)$, and
    \item The decompositions $G$ and $G'$ are dual to each other with respect to the Poincar\'e paring, \emph{i.e.} $G_kH^d(X,F)\times G'_{k'}H^{d'}_c(X,F)\to F$ is a perfect paring if $k+k'=m$ and $d+d'=2n$, and is 0 otherwise.
\end{enumerate}

Since $G'$ is uniquely determined by $G$ by (2), we will simply say a $G$-decomposition $G_\bullet H^*(X,F)$ without mentioning $G'$ when no confusion arises. For the purpose of this paper, we fix the length $m=\dim X$ and the coefficient $F=\BQ$ of $G$-decompositions. Note that when $S$ is a smooth quasi-projective surface, a $G$-decomposition on $H^*(S,\BQ)$ canonically induces a $G$-decomposition on $H^*(S^{[n]},\BQ)$. We have:

\begin{thm}[Theorem \ref{6.1}, \ref{6.2}]\label{1.2}
Let $n\ge2$. Let $S$ be a smooth quasi-projective surface equipped with a $G$-decomposition  $G_\bullet H^*(S,\BQ)$. If the induced $G$-decomposition on $H^*(S^{[n]},\BQ)$ is strongly multiplicative, then $K\in G_1H^2(S,\BQ)$. Suppose that the $G$-decomposition on $H^*(S,\BQ)$ is strongly multiplicative, then the converse is also true.
\end{thm}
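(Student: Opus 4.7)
The strategy in both directions uses the Nakajima-Lehn description of $H^*(S^{[n]},\BQ)$ together with the natural rule assigning to a Nakajima monomial $q_{k_1}(\alpha_1)\cdots q_{k_r}(\alpha_r)v_\emptyset$ the induced $G$-degree $\sum_j(\deg_G(\alpha_j)+(k_j-1))$. Under this rule the boundary class $\delta\in H^2(S^{[n]},\BQ)$, which up to a nonzero scalar equals $q_2(1)q_1(1)^{n-2}v_\emptyset$, lies in $G_1H^2(S^{[n]},\BQ)$.

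For the forward implication, assume the induced decomposition is strongly multiplicative, so $\delta^2\in G_2H^4(S^{[n]},\BQ)$. I would expand $\delta^2$ via Lehn's commutation formula for the derivation $\Fd$ representing cup product with $\delta$, which schematically reads
\[
[\Fd,q_k(\alpha)]=\tfrac{k(k-1)}{2}\,q_k(K_S\cdot\alpha)+\sum_{\substack{i+j=k\\ i,j>0}}\tfrac{ij}{2}\,q_iq_j(\alpha).
\]
The resulting expansion of $\delta^2$ contains a nonzero multiple of $q_2(K_S)q_1(1)^{n-2}v_\emptyset$. Since Nakajima monomials are linearly independent and the induced decomposition is homogeneous on each such monomial, this term has $G$-degree $\deg_G(K_S)+1$. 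The containment $\delta^2\in G_2$ forces $\deg_G(K_S)+1\le 2$, i.e.\ the $G_{\ge 2}$-component of $K_S$ vanishes. An auxiliary product, pairing $\delta$ against a class detecting the $G_0$-component of $H^2(S,\BQ)$, rules out any $G_0$-contribution and yields $K_S\in G_1H^2(S,\BQ)$.

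For the converse, suppose $K_S\in G_1H^2(S,\BQ)$ and $G_\bullet H^*(S,\BQ)$ is strongly multiplicative. I would verify strong multiplicativity on $H^*(S^{[n]},\BQ)$ by expanding arbitrary cup products of Nakajima monomials through iterated applications of Lehn's formula and tracking $G$-degrees term by term. The two types of correction terms cooperate with the grading: a splitting $q_k\mapsto q_iq_j$ with $i+j=k$ preserves the total shift, since $(i-1)+(j-1)+1=k-1$, with the extra $+1$ coming from the collision; and a substitution $\alpha\mapsto K_S\cdot\alpha$ in a $q_k(\alpha)$ factor raises the $G$-degree by $\deg_G(K_S)=1$, precisely the $+1$ built into the commutator. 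Strong multiplicativity of $G_\bullet H^*(S,\BQ)$ ensures that each factor $K_S\cdot\alpha$ lives in the expected $G$-graded piece.

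The main obstacle is the combinatorial bookkeeping in the converse direction. Iterating Lehn's formula produces, beyond first-order $K_S$-corrections, higher-order terms involving products of Chern classes of $S$, notably $c_2(S)$. For a surface this remains tractable: $H^4(S,\BQ)$ lies in $G_2$ by the Poincar\'e duality axiom of a $G$-decomposition paired with $1\in G_0H^0(S,\BQ)$, so $c_2(S)\in G_2H^4(S,\BQ)$ comes for free, and the hypothesis $K_S\in G_1H^2(S,\BQ)$ is indeed the only additional input needed beyond strong multiplicativity on $S$.
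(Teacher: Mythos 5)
Your forward direction is essentially the paper's own argument: write $\partial S^{[n]}\cdot\partial S^{[n]}=-2\,\partial\,\Fq_1(1)^{n-2}\Fq_2(1)\mathbf{1}$, expand with $[\partial,\Fq_k(\alpha)]=kL_k(\alpha)+\binom{k}{2}\Fq_k(K_S\alpha)$, and read off the term $\Fq_1(1)^{n-2}\Fq_2(K_S)\mathbf{1}$. One correction, though: your closing ``auxiliary product'' is both unspecified and unnecessary. Strong multiplicativity is a statement about the \emph{decomposition}, so $\delta^2$ must lie in exactly $G_2H^4(S^{[n]},\BQ)$, not merely in $G_{\le 2}$. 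Writing $K_S=K_0+K_1+K_2$ with $K_i\in G_iH^2(S,\BQ)$, the term $\Fq_1(1)^{n-2}\Fq_2(K_i)\mathbf{1}$ is pure of $G$-degree $i+1$, while the $L_1(1)$- and $L_2(1)$-terms are pure of $G$-degree $2$; since the Nakajima monomials involved are linearly independent, the components in $G_1H^4$ and $G_3H^4$ must each vanish, killing $K_0$ and $K_2$ simultaneously. So the same expansion already gives $K_S\in G_1H^2(S,\BQ)$; if your argument genuinely needed a separate pairing to exclude $K_0$, that step as stated would be the weak point.

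The converse as written has a genuine gap. Lehn's boundary commutator (your derivation formula for cup product with $\delta$) only computes products with the boundary class; ``expanding arbitrary cup products of Nakajima monomials through iterated applications of Lehn's formula'' is not a procedure that determines the ring structure of $H^*(S^{[n]},\BQ)$. What is needed — and what the paper uses — are two further inputs: (i) the Li--Qin--Wang theorem that $H^*(S^{[n]},\BQ)$ is generated as a ring by the tautological classes $\alpha^{[n]}_l$, and (ii) Lehn's formula $[\alpha^{[\bullet]},\Fq_1(y)]=\exp(\mathrm{ad}\,\partial)\Fq_1(\alpha y)$, which, by induction on conformal weight and cohomological degree and using the purity of $\Fq$, $L$, and $\partial$ (the last two requiring strong multiplicativity on $S$ and $K_S\in G_1$), shows that cupping with $\alpha^{[n]}_l$ is an operator of pure tridegree $(0,\deg\alpha+2l-4,\Fg(\alpha)+l-2)$; strong multiplicativity of $G_\bullet H^*(S^{[n]},\BQ)$ then follows because every pure class is a polynomial in tautological classes. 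Without naming these ingredients your expansion has nothing to iterate on a general product. There is also a bookkeeping slip in your degree count: in the splitting (Virasoro) term the class is inserted through the coproduct, i.e.\ a diagonal class of $G$-degree $2$, so that term raises the $G$-degree by $(i-1)+(j-1)+2-(k-1)=+1$, exactly like the $K_S\alpha$-term; as written, your two correction terms carry shifts differing by one, which would contradict the purity of $[\partial,\Fq_k(\alpha)]$ that the whole argument rests on. Your observation that $c_2(S)$-type corrections are harmless because $H^4(S,\BQ)=G_2H^4(S,\BQ)$ is correct and matches the paper's treatment of $\mathrm{td}_2(S)$.
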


Our approach to prove Theorem \ref{1.2} is to understand how the Heisenberg algebra and Virasoro algebra act on the $G$-decompositions. We show that they are pure with respect to the $G$-decompositions, i.e. the image of any direct summand is contained in a single direct summand. As a consequence, we are able to determine the direct summands where the tautological classes locate in the $G$-decomposition. We list the properties of various operators here, following the notations in Section 2: 

\begin{thm}[Proposition \ref{4.2}, \ref{4.3}, \ref{4.7}, \ref{5.1}]\label{1.3}
Let $S$ be a quasi-projective surface equipped with a G-decomposition $G_\bullet H^*(S,\BQ)$. Then $G_\bullet$ induces a $G$-decomposition on $H^*(S^{[n]},\BQ)$ for all $n\ge 1$.
Therefore, the Fock space $\BH=\oplus_nH^*(S^{[n]},\BQ)$ is a trigraded vector space
\[
\BH=\bigoplus_{n,d,k} G_kH^d(S^{[n]},\BQ)
\]
with grading $(n,d,k)$. Let $\alpha\in G_kH^d(S,\BQ)$ if $n\ge 0$, and $G'_kH^d_c(S,\BQ)$ if $n<0$. We have
\begin{enumerate}
    \item The Nakajima operator $\Fq_n(\alpha)$ is of degree $(n,d+2n-2,k+n-1)$.
    \item The Virasoro operator $L_n(1)$ is of degree $(n,2n,n)$.
\end{enumerate}
Suppose that $G_\bullet H^*(S,\BQ)$ is strongly multiplicative.
\begin{enumerate}
    \item[(3)] The Virasoro operator $L_n(\alpha)$ is of degree $(n,d+2n,k+n)$. 
\end{enumerate}
Suppose further that $K_S\in G_1H^2(S,\BQ)$.
\begin{enumerate}
    \item[(4)] The boundary operator $\partial$ is of degree $(0,2,1)$.
    \item[(5)] The ``cupping with $\alpha^{[n]}_l$" operator is of degree $(0,d+2l-4,k+l-2)$. 
\end{enumerate}
\end{thm}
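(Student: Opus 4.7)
The plan is to exploit the Fock-space description $\BH=\bigoplus_n H^*(S^{[n]},\BQ)$, in which every operator appearing in the theorem admits an explicit expression in Nakajima creation and annihilation operators; once so expanded, each operator's trigrading follows by tracking the trigrading of its building blocks. I would organize the five claims by increasing strength of the hypothesis: (1) is essentially by definition, (2) invokes the Poincaré duality axiom of a $G$-decomposition, (3) additionally needs strong multiplicativity of $G$ on $S$, and (4)--(5) further need $K_S\in G_1H^2(S,\BQ)$.

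For (1), the induced $G$-decomposition on $H^*(S^{[n]},\BQ)$ is defined precisely so that a Nakajima monomial $\Fq_{n_1}(\alpha_1)\cdots\Fq_{n_r}(\alpha_r)|0\rangle$ lies in the graded piece of $G$-degree $\sum_i(k_i+n_i-1)$ and cohomological degree $\sum_i(d_i+2n_i-2)$, so the claim is a restatement of this definition on single generators. For (2), I would use the standard Virasoro formula
\[
L_n(1)=\tfrac{1}{2}\sum_{k+l=n}{:}\Fq_k\Fq_l{:}(\tau_*1),
\]
where $\tau\colon S\hookrightarrow S\times S$ is the diagonal, and expand $\tau_*1=[\Delta_S]=\sum_i e_i\otimes e^i$ in a $G$-homogeneous basis $\{e_i\}\subset H^*(S,\BQ)$ with Poincaré-dual basis $\{e^i\}\subset H^*_c(S,\BQ)$. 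The duality axiom of a $G$-decomposition forces $e^i\in G'_{2-a_i}H^{4-d_i}_c(S,\BQ)$ whenever $e_i\in G_{a_i}H^{d_i}(S,\BQ)$, so applying (1) to each factor gives each summand the trigrading $(n,2n,n)$. For (3), the same bookkeeping applies after the projection formula $\tau_*\alpha=[\Delta_S]\cdot\pi_1^*\alpha=\sum_i(\alpha\cup e_i)\otimes e^i$; strong multiplicativity of $G$ on $S$ upgrades $\alpha\cup e_i$ to an element of $G_{k+a_i}H^{d+d_i}(S,\BQ)$, and the trigrading adds up to $(n,d+2n,k+n)$.

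For (4), I would invoke Lehn's boundary formula, which writes the boundary operator as a bilinear combination of Nakajima operators with inputs built from $1$ and $K_S$, together with terms of the form $L_m(1)$. Under $K_S\in G_1H^2(S,\BQ)$, claim (1) shows each $K_S$-indexed Nakajima pair has trigrading $(0,2,1)$, while (2) handles the Virasoro terms; summing gives (4). For (5), I would use the Lehn--Li--Qin--Wang recursive description of $\alpha^{[n]}_l$ as an iterated commutator built from $\partial$, the Nakajima operators $\Fq_m(\bullet)$, and the Virasoro operators $L_m(\bullet)$, with $\alpha$- and $K_S$-inputs; induction on $l$ combined with (1)--(4) then yields the trigrading $(0,d+2l-4,k+l-2)$.

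The main obstacle lies in this last step: the Lehn--LQW recursion mixes every previously analyzed operator type, introduces a $K_S$-insertion at each boundary commutator, and generates lower-order Heisenberg and Virasoro contributions whose trigradings must be tracked carefully. The hypothesis $K_S\in G_1H^2(S,\BQ)$ is precisely what keeps the count balanced, since each additional $K_S$-factor contributes $+2$ to the cohomological degree and $+1$ to the $G$-degree, matching exactly the $(2,1)$-jump contributed by one application of $\partial$ in (4).
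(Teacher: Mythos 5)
Your treatment of item (1) is where the genuine gap lies. For $n>0$ the claim is indeed essentially definitional (it is the paper's Proposition \ref{4.1}, read off from the G\"ottsche--Soergel summands), but for $n<0$ it is not: the induced grading on $H^*(S^{[n]},\BQ)$ is described through creation monomials, so the degree of an annihilation operator $\Fq_{-m}(\beta)$, $\beta\in G'_kH^d_c(S,\BQ)$, must be computed by commuting it through a monomial $\Fq_{m_1}(\beta_1)\cdots\Fq_{m_s}(\beta_s)\mathbf{1}$ using $[\Fq_{n_1}(\alpha_1),\Fq_{n_2}(\alpha_2)]=n\,\delta_{n_1,-n_2}\int_S\alpha_1\alpha_2\cdot\Id_\BH$ and $\Fq_{-m}(\beta)\mathbf{1}=0$. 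The resulting terms survive only when $\int_S\beta\beta_i\neq0$, and the asserted $G$-degree shift $k+n-1$ holds precisely because the duality axiom of a $G$-decomposition kills this pairing off complementary bidegrees ($\deg\beta+\deg\beta_i=4$, $\Fg(\beta)+\Fg(\beta_i)=2$); this is exactly the nontrivial half of the paper's Proposition \ref{4.2}. The omission is load-bearing, not cosmetic: your proofs of (2) and (3) expand $L_n(\alpha)=\tfrac12\sum_{m\in\BZ}\Fq_m\Fq_{n-m}(\cdots)$ and therefore invoke (1) for negative modes for infinitely many $m$.

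The remaining items are workable but differ from, or under-specify, the paper's route. For (4) the paper never uses a closed formula for $\partial$; it only uses the quasi-projective commutator identity $[\partial,\Fq_n(\alpha)]=nL_n(\alpha)+\binom{n}{2}\Fq_n(K_S\alpha)$ of Theorem \ref{2.4} and computes $\partial$ on the Nakajima basis via $\partial\,\mathbf{1}=0$. If you instead invoke Lehn's explicit boundary formula you must justify it in the quasi-projective setting (compact supports in $\tau_{3*}1$), and note that the cubic diagonal term also needs strong multiplicativity to make its insertions pure --- the theorem's hypothesis for (4) includes strong multiplicativity, which your sketch does not use there. For (5), ``induction on $l$'' is not the right induction: the paper inducts on the conformal weight and cohomological degree of the class $x$ being cupped, using $\BH=\partial\BH+\sum_\beta\Fq_1(\beta)\BH$, Lehn's relation $[\alpha^{[\bullet]},\Fq_1(y)]=\exp(\mathrm{ad}\,\partial)\Fq_1(\alpha y)$, the commutation of $\alpha^{[\bullet]}$ with $\partial$, and the degree of $(\mathrm{ad}\,\partial)^{l-2}\Fq_1(\alpha\beta)$ from Corollary \ref{4.8}. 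Keep in mind that what must be proved is the degree of the cup-product operator on arbitrary $x$, not merely purity of the class $\alpha^{[n]}_l$; the latter alone would make the intended application to strong multiplicativity of $G_\bullet H^*(S^{[n]},\BQ)$ circular. Your list of ingredients is right, but the induction scheme needs to run over $x$, not over $l$.
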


\subsection{Outline} The paper is organized as follows. In Section 2, we set up the notations for Hilbert schemes and recall the work of Lehn and Li-Qin-Wang on tautological classes and cup products of Hilbert schemes of points on smooth surfaces. We also define $G$-decompositions as a generalization of perverse decomposition, and recall some basic facts about $G$-decompositions on the cohomology of Hilbert schemes. In Section 3, we calculate the $G$-degree of Nakajima operators, Virasoro operators, the boundary operator, and ``cupping with tautological class'' operators. As a corollary, we determine the precise $G$-degree of tautological classes. We prove our main theorem by using Lehn's formula to calculate the $G$-degrees of the cup product. Section 4 contains our main application on multiplicativity of perverse decomposition.

\subsection{Acknowledgements}
I thank Mark de Cataldo, Shizhang Li, and Junliang Shen for helpful discussions.

\section{Hilbert schemes}
\subsection{Cup product}
In this section we set up notations and recall cup product formula following \cite{GS,L,LQW}. Let $X$ be a smooth quasi-projective surface. Let $X^{[n]}$ be the Hilbert scheme of $n$ points on $X$. We denote
\[
\BH=\bigoplus_{n=0}^\infty H^*(X^{[n]},\BQ).
\]
There is a distinguished element in $H^*(X^{[0]},\BQ)=\BQ$, denoted by $\mathbf{1}$.  Let $Z_{n,n+k}\subset X\times X^{[n]}\times X^{[n+k]}$ be the subvariety defined by
\[
Z_{n,n+k}=\left\{(x,\xi,\eta)\mid \CI_\eta\subset\CI_\xi,\,\CI_\xi/\CI_\eta\textrm{ is supported at }x\right\}.
\]
Let $p_1,p_2,p_3$ be the projections to the three factors. For $\alpha\in H^*(X)$, $k>0$ we define 
\begin{align*}
\Fq_k(\alpha): H^*(X^{[n]})&\to H^*(X^{[n+k]})\\
               \gamma&\mapsto p_{3*}(p_1^*\alpha\cdot p_2^*\gamma\cdot Z_{n,k+n})
\end{align*}
For $\beta\in H^*_c(X)$ and $k>0$, we define
\begin{align*}
\Fq_{-k}(\beta): H^*(X^{[n+k]})&\to H^*(X^{[n]})\\
            \gamma&\mapsto p_{2*}(p_1^*\beta\cdot p_3^*\gamma\cdot Z_{k,k+n}).
\end{align*}

When $k=0$, the Nakajima operators is identically 0. The Nakajima operators $\Fq_{k}$ satisfy the following identity.

\begin{thm}{\cite[Theorem 3.1]{N}}\label{nak}
For $n_1,n_2\in\BZ$, $\alpha_i\in H^*(X,\BQ)$ if $n_i>0$, and $\alpha_i\in H_c^*(X,\BQ)$ if $n_i<0$, then the following relation hold.
\[
[\Fq_{n_1}(\alpha_1),\Fq_{n_2}(\alpha_2)]=n\delta_{n_1,-n_2}\int_X{\alpha_1\alpha_2}\cdot\Id_\BH,
\]
where $[\cdot,\cdot]$ is the supercommutator and the $\delta$ is the Kronecker function.
\end{thm}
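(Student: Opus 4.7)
The plan is to follow Nakajima's original strategy: realize the commutator as a correspondence that must be supported on a small diagonal, and then extract the constant via an excess-intersection computation. The composition $\Fq_{n_1}(\alpha_1) \circ \Fq_{n_2}(\alpha_2)$, viewed on $H^*(X^{[m]})$, is a pushforward of $p_1^*\alpha_1 \cdot p_2^*\alpha_2$ against the fiber product of two nested incidence cycles coming from $Z$, landing on $X \times X \times X^{[m]} \times X^{[m+n_1+n_2]}$ before one integrates out the two copies of $X$. The commutator is the signed difference of the two orderings of this construction.

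First I would stratify the ambient product by whether the two insertion/removal points $x_1, x_2 \in X$ coincide. On the open stratum $\{x_1 \neq x_2\}$, a canonical swap of factors identifies the two orderings as isomorphic cycles, so the commutator is supported on the small-diagonal stratum $\{x_1 = x_2\}$. For $n_1, n_2$ of the same sign, this symmetry persists even on the diagonal (the operation collapses to adding, or removing, $|n_1|+|n_2|$ points at one location, independent of order), and the commutator vanishes. For opposite signs with $n_1 + n_2 \neq 0$, a codimension count shows the diagonal contribution sits in the wrong cohomological degree to survive pushforward to $X^{[m]} \times X^{[m+n_1+n_2]}$.

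This isolates the key case $n_1 = -n_2 > 0$, whose common absolute value we call $n$. Here the commutator gives a correspondence on $X^{[m]} \times X^{[m]}$ supported on the diagonal $\xi = \eta$, and I would compute its class by resolving the nested Hilbert schemes, identifying the excess normal bundle, and integrating $p_1^*\alpha_1 \cdot p_2^*\alpha_2$ against the corresponding top Chern class. The target formula $n \int_X \alpha_1 \alpha_2 \cdot \Id_\BH$ then emerges with the factor $n$ arising from the length of the cancelled quotient $\CI_\xi/\CI_\eta$ at a single point of $X$, i.e.\ the combinatorial count of ways to insert and immediately remove the same cluster of $n$ points.

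The main obstacle is precisely this excess-intersection step, because the nested Hilbert schemes are singular along the small diagonal and the normal-bundle analysis is delicate. Nakajima circumvents it by first checking the identity in the model case $X = \BC^2$, where $T^2$-equivariant localization on the partition-indexed fixed locus of $(\BC^2)^{[m]}$ pins down the universal constant. The answer then globalizes to any smooth quasi-projective surface by the locality of the defining correspondences under \'etale open inclusions, together with the integration-along-the-fibers that completes the calculation.
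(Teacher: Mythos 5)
The paper does not prove this statement at all: it is quoted from Nakajima \cite[Theorem 3.1]{N} (with the quasi-projective refinement handled as in \cite{N1}), so there is no internal argument to compare yours against; your sketch is an outline of the cited proof rather than an alternative to anything in this paper. As an outline it identifies the right skeleton (off-diagonal cancellation of the two orderings, reduction to the stratum $x_1=x_2$, isolation of the case $n_1=-n_2$, and a multiplicity computation there), but two steps are not actually established. First, your claim that for $n_1,n_2$ of the same sign ``the symmetry persists even on the diagonal'' is unjustified: at a common point $x$ the two orderings pass through different intermediate subschemes $\xi\subset\zeta\subset\eta$ (quotients of lengths $n_2$ then $n_1$, versus $n_1$ then $n_2$), and there is no canonical identification of these cycles with multiplicities. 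The correct argument is the same dimension estimate you invoke only in the mixed-sign case: the difference of the two compositions is supported in $\{(x,x,\xi,\eta)\mid \xi\subset\eta,\ \eta/\xi \text{ punctual at } x\}$, whose dimension is one less than that of the cycle class defining the operators, so its pushforward vanishes whenever $n_1+n_2\neq 0$, regardless of signs.

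Second, the heart of the theorem --- that in the case $n_1=-n_2=n>0$ the diagonal-supported component contributes exactly $n\int_X\alpha_1\alpha_2\cdot\Id_\BH$ --- is only named, not proved. Deferring it to $T^2$-equivariant localization on $(\BC^2)^{[m]}$ plus ``locality'' presupposes the key universality statement (that the commutator is a multiple of the identity correspondence with a coefficient $c_n$ independent of the surface and of the classes), which is precisely Nakajima's reduction and itself requires the irreducibility/dimension analysis of the punctual locus; the coefficient was then computed (originally by Ellingsrud--Str\o mme, or by localization in later treatments). As written, your proposal assumes this universality rather than deriving it. Finally, in the quasi-projective setting you should note why the statement is even well-posed: the projections must be proper on the supports of the correspondences for the pushforwards to exist, and the pairing $\int_X\alpha_1\alpha_2$ is defined because exactly one of $\alpha_1,\alpha_2$ lies in $H^*_c(X,\BQ)$ (the operators of negative conformal weight use compact supports); this is the content of the extension in \cite{N1} and is silently elided in your sketch. (Also, the ``$n$'' in the displayed formula should be read as $n_1$; your normalization $n=|n_1|$ only matches after the super-antisymmetry sign is tracked.)
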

G\"ottsche's formula represents the (bi)graded ring $\BH$ as a symmetric product as follows.
\begin{thm}{\cite[Theorem 3.6]{LS}}
There is an isomorphism between graded vector spaces
\begin{align*}
\textnormal{Sym}^*(H^*(X,\BQ)\otimes t\BQ[t])&\cong\bigoplus_{n\ge0}H^*(X^{[n]},\BQ), \\
(\alpha_1t^{n_1})\cdots (\alpha_st^{n_s})&\mapsto \Fq_{n_1}(\alpha_1)\cdots\Fq_{n_s}(\alpha_s)\mathbf{1}.
\end{align*}
\end{thm}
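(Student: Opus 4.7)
The plan is to apply Nakajima's theorem (Theorem \ref{nak}) to identify $\BH$ with a Fock-space representation of an infinite-dimensional Heisenberg super-Lie algebra, and then deduce the isomorphism with the symmetric algebra by combining standard Heisenberg representation theory with a Betti-number comparison. The statement asserts only an isomorphism of graded vector spaces, so I would not attempt to track ring structures, only linear dimensions in each bidegree.

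First I would fix a homogeneous basis of $H^*(X,\BQ)$ and, via Poincar\'e duality, a dual basis of $H^*_c(X,\BQ)$. Nakajima's relations show that the operators $\Fq_n(\alpha)$ for $n\neq 0$ generate a Heisenberg super-Lie algebra $\Fh$ acting on $\BH$. I would then record the \emph{vacuum property} $\Fq_{-n}(\beta)\mathbf{1}=0$ for $n>0$: such an operator would land in $H^*(X^{[-n]},\BQ)$, which is zero, so it kills $\mathbf{1}\in H^*(X^{[0]},\BQ)=\BQ\cdot\mathbf{1}$.

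Next I would verify that the proposed map $\Phi:\textnormal{Sym}^*(H^*(X,\BQ)\otimes t\BQ[t])\to\BH$ is well defined. For creation operators ($n_1,n_2>0$) one has $n_1\neq -n_2$, so Theorem \ref{nak} gives $[\Fq_{n_1}(\alpha_1),\Fq_{n_2}(\alpha_2)]=0$ in the super sense. Hence the ordered product $\Fq_{n_1}(\alpha_1)\cdots\Fq_{n_s}(\alpha_s)\mathbf{1}$ is super-symmetric in the pairs $(n_i,\alpha_i)$, which is precisely the relation defining the super-symmetric algebra $\textnormal{Sym}^*(H^*(X,\BQ)\otimes t\BQ[t])$. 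The map $\Phi$ manifestly respects the grading by $n=\sum n_i$ and by total cohomological degree, since $\Fq_{n_i}(\alpha_i)$ raises $n$ by $n_i$ and raises cohomological degree by $\deg\alpha_i+2n_i-2$.

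Surjectivity would follow from Nakajima's theorem: the creation operators $\{\Fq_n(\alpha)\}_{n>0}$, together with the vacuum relations above and the Heisenberg commutation relations, make $\BH$ a highest-weight $\Fh$-module with cyclic vector $\mathbf{1}$, generated over the creation subalgebra by $\mathbf{1}$. The main obstacle is injectivity, which I would handle by a dimension count: compare $\sum_n\dim H^*(X^{[n]},\BQ)\,q^n$, given by G\"ottsche's formula, with the Hilbert series of the super-symmetric algebra on $H^*(X,\BQ)\otimes t\BQ[t]$, and check that the two match in each bidegree. The delicate point is the parity and degree bookkeeping: the factor $t\BQ[t]$ contributes a cohomological shift of $2n-2$, and the super-symmetric algebra treats even and odd classes differently, so the match must be verified bidegree-by-bidegree rather than just in total dimension. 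Once the Hilbert series agree, the surjection $\Phi$ is forced to be an isomorphism.
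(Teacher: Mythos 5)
The paper itself does not prove this statement---it is quoted from Lehn--Sorger (ultimately G\"ottsche--Soergel plus Nakajima)---so your proposal is measured against the standard argument, whose architecture (Heisenberg representation theory plus a Betti-number comparison) you have correctly identified. However, there is one genuine gap, and it sits at the crux of the theorem: you claim that surjectivity ``follows from Nakajima's theorem'' because the commutation relations and the vacuum property make $\BH$ a cyclic highest-weight module generated by $\mathbf{1}$ over the creation operators. That cyclicity is not a consequence of Theorem \ref{nak}: the relations are perfectly consistent with $\BH$ containing additional vectors of positive conformal weight annihilated by all $\Fq_{-m}(\beta)$, $m>0$ (extra ``vacuum vectors''), in which case $\BH$ would decompose as a direct sum of several Fock modules and $\Phi$ would fail to be surjective. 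Ruling this out is exactly the content of the theorem, so as written your surjectivity step is circular; and since your injectivity step (``the surjection is forced to be an isomorphism once the Hilbert series agree'') presupposes surjectivity, the logical chain breaks.

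The repair is standard and keeps both of your ingredients, but with their roles reversed. First prove injectivity representation-theoretically: $\Phi$ intertwines creation and annihilation operators (the latter computed on monomials via Lemma \ref{comm}, the relations of Theorem \ref{nak}, and $\Fq_{-m}(\beta)\mathbf{1}=0$), so $\ker\Phi$ is a submodule of the Fock space; because the pairing $\int_X\alpha\beta$ between $H^*(X,\BQ)$ and $H^*_c(X,\BQ)$ is perfect, any nonzero submodule of the Fock space contains the vacuum (apply suitable annihilation operators to a nonzero element of minimal conformal weight), while $\Phi(\mathbf{1})=\mathbf{1}\neq 0$; hence $\ker\Phi=0$. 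Then the G\"ottsche--Soergel formula, which computes the bigraded dimensions of $\bigoplus_{n\ge0}H^*(X^{[n]},\BQ)$ independently of any operator theory and matches them bidegree-by-bidegree with those of $\textnormal{Sym}^*(H^*(X,\BQ)\otimes t\BQ[t])$, upgrades the injection to an isomorphism. (Equivalently one can use local nilpotency of the annihilation operators---forced by $H^*(X^{[n]},\BQ)=0$ for $n<0$---and the structure theory of Heisenberg modules, but some dimension count is unavoidable for surjectivity.) Your remaining steps---well-definedness from the super-commutativity of the creation operators and the bookkeeping of the bigrading---are correct as stated.
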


In particular, fix a linear basis $B$ of $H^*(X,\BQ)$, the set
\[
\{\Fq_{n_1}(\alpha_1)\cdots\Fq_{n_s}(\alpha_s)\mathbf{1}\mid n_1+\cdots+n_s=n,\,\alpha_i\,\in B,\,n_i>0,\,1\le i\le s\}
\]
is a linear basis of $H^*(X^{[n]},\BQ)$. Let $Z_n\subset X^{[n]}\times X$ be the universal subscheme, and let $p:Z_n\to X^{[n]}$ and $q:Z_n\to X$ be the natural projections. For any element $\alpha\in H^*(X,\BQ)$, denote
\[
\alpha^{[n]}=p_*(\textrm{ch}(\CO_{Z_n})\cdot q^*(\alpha\cdot \textrm{td}(S)))
\]
and
\begin{equation}\label{taut}
\begin{split}
\alpha ^{[n]}_k=&p_*(\ch_k(\CO_{Z_n})\cdot q^*(\alpha\cdot \textrm{td}_0(S))\\
&+\ch_{k-1}(\CO_{Z_n})\cdot q^*(\alpha\cdot \textrm{td}_1(S))\\
&+\ch_{k-2}(\CO_{Z_n})\cdot q^*(\alpha\cdot \textrm{td}_2(S))).
\end{split}
\end{equation}
We have
\[
\alpha^{[n]}=\sum_{k\ge0}\alpha^{[n]}_k.
\]
It follows from the definition that $\deg \alpha^{[n]}_k=\deg\alpha+2k-4$. We denote by $\alpha^{[\bullet]}\in \textrm{End}_\BQ{\BH}$ the linear operator which is multiplication by $\alpha^{[n]}$ on $H^*(X^{[n]},\BQ)$. We denote the homogeneous degree 2 component of $1^{[\bullet]}$ as $\partial$. 

\begin{thm}{\cite[Theorem 4.2]{L}} \label{pr}
For $\alpha,y\in H^*(X,\BQ)$.
\begin{equation}\label{pr1}
[\alpha^{[\bullet]},\Fq_{1}(y)]=\textnormal{exp}(\textnormal{ad}\,\partial)\Fq_1(\alpha\cdot y),
\end{equation}
where $\textup{ad}(\partial)(-):=[\partial,-]$.
\end{thm}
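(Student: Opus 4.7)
The plan is to pass to the nested Hilbert scheme $X^{[n,n+1]}$, which parametrizes flags $\xi\subset\eta$ with $\operatorname{length}(\eta/\xi)=1$. This scheme is equipped with projections $\pi_n,\pi_{n+1}$ to $X^{[n]},X^{[n+1]}$ and a residue map $\rho:X^{[n,n+1]}\to X$, whose combined map realizes $X^{[n,n+1]}$ as the incidence subvariety $Z_{n,n+1}$ that defines $\Fq_1$. Under this identification, the Nakajima operator rewrites as $\Fq_1(y)(\gamma)=\pi_{n+1,*}(\rho^*y\cdot \pi_n^*\gamma)$, and the first step is to express the commutator as a single correspondence from $X^{[n,n+1]}$: by the projection formula,
\[
[\alpha^{[\bullet]},\Fq_1(y)](\gamma)=\pi_{n+1,*}\bigl((\pi_{n+1}^*\alpha^{[n+1]}-\pi_n^*\alpha^{[n]})\cdot\rho^*y\cdot\pi_n^*\gamma\bigr).
\]
So the entire question is to compute the difference class $\Delta\alpha:=\pi_{n+1}^*\alpha^{[n+1]}-\pi_n^*\alpha^{[n]}$ on $X^{[n,n+1]}$.

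The main computation takes place on $X\times X^{[n,n+1]}$, where the pullbacks of the two universal ideals fit into the tautological short exact sequence
\[
0\to \CI_\eta\to \CI_\xi\to \CO_{\Gamma_\rho}\to 0,
\]
with $\Gamma_\rho$ the graph of $\rho$. Taking Chern characters and combining with the defining formula \eqref{taut}, one obtains $\Delta\alpha=p_*\bigl(\ch(\CO_{\Gamma_\rho})\cdot q^*(\alpha\cdot\operatorname{td}(X))\bigr)$, where $p,q$ are the projections from $X\times X^{[n,n+1]}$. Since $\Gamma_\rho$ is a section of $p$ with normal bundle $\rho^*T_X$, the Koszul resolution of its ideal (equivalently Grothendieck--Riemann--Roch) gives
\[
\ch(\CO_{\Gamma_\rho})=\Gamma_{\rho*}\bigl(\operatorname{td}(\rho^*T_X)^{-1}\bigr),
\]
which, after expansion, exhibits an exponential structure in a single universal divisor class $\ell$ on $X^{[n,n+1]}$, namely the first Chern class of the tautological quotient line bundle $\CI_\xi/\CI_\eta$ supported along $\Gamma_\rho$.

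The final and crucial task is to identify this geometric exponential with $\exp(\operatorname{ad}\partial)$ on the algebraic side. The operator $\partial$ is cup product with the degree-two component of $1^{[\bullet]}$; by the definition \eqref{taut}, its pullback through $\pi_{n+1}$ is, up to lower order corrections coming from $\operatorname{td}(X)$, precisely the divisor class $\ell$ appearing in the Koszul expansion. Granting this identification, the power series in $\ell$ produced by $\ch(\CO_{\Gamma_\rho})$, once inserted into the pushforward and combined with $\Fq_1$, translates into $\sum_k\tfrac{1}{k!}(\operatorname{ad}\partial)^k$ applied to $\Fq_1(\alpha\cdot y)$, which is exactly the right-hand side. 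The hard part is this last reconciliation: one must show that cupping with $\ell$ on $X^{[n,n+1]}$ and then pushing forward via $\pi_{n+1}$ is \emph{the same operator} as commuting with $\partial$ on $\mathbb{H}$. I would handle this by induction on the order of the expansion, using that $\operatorname{ad}\partial$ is a derivation of the creation algebra, so that the first-order identity $[\partial,\Fq_1(\alpha y)]$ --- the coefficient of $\ell^1$ in the Koszul expansion --- bootstraps, through careful bookkeeping of boundary divisors on the nested Hilbert schemes, to force all higher-order coefficients to match.
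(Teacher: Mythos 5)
This statement is not proved in the paper at all---it is imported from Lehn \cite{L} (with the quasi-projective extension in \cite{N1})---so the only benchmark is Lehn's original argument, and your outline is essentially a sketch of that argument: the nested Hilbert scheme $X^{[n,n+1]}$ as a correspondence, the tautological exact sequence on $X\times X^{[n,n+1]}$, and Grothendieck--Riemann--Roch along the graph of the residue map. Two points, however, are genuine gaps rather than omitted routine detail. First, the exact sequence as you display it is wrong: the quotient $\CI_\xi/\CI_\eta$ is not $\CO_{\Gamma_\rho}$ but $\Gamma_{\rho*}L$ for a nontrivial tautological line bundle $L$ on $X^{[n,n+1]}$, and it is precisely $\ell=c_1(L)$ that produces the exponential, since GRR gives $\ch(\Gamma_{\rho*}L)=\Gamma_{\rho*}\bigl(e^{\ell}\cdot\operatorname{td}(\rho^*T_X)^{-1}\bigr)$ and the Todd factor cancels against the twist built into the definition (\ref{taut}), leaving $\pi_{n+1}^*\alpha^{[n+1]}-\pi_n^*\alpha^{[n]}=e^{\ell}\cdot\rho^*\alpha$. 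With $\CO_{\Gamma_\rho}$ as written, no power of $\ell$ appears anywhere and the right-hand side of (\ref{pr1}) could not possibly come out; this is not a typo but the place where the content lives.

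Second---and this is the real issue---after the GRR step the theorem is exactly the statement that for every $k$ the correspondence $\gamma\mapsto\pi_{n+1,*}\bigl(\ell^{k}\cdot\rho^*(\alpha y)\cdot\pi_n^*\gamma\bigr)$ coincides with $(\operatorname{ad}\partial)^{k}\Fq_1(\alpha y)$, and your proposal leaves this entirely unproved. You assert the $k=1$ case (``the coefficient of $\ell^1$ is $[\partial,\Fq_1(\alpha y)]$'') without argument, and the claim that the higher coefficients are then ``forced'' because $\operatorname{ad}\partial$ is a derivation is not a valid deduction: the derivation property of $\operatorname{ad}\partial$ on $\operatorname{End}_\BQ(\BH)$ says nothing about cup products with powers of $\ell$ on the nested Hilbert scheme unless one already knows the identity being proved. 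Likewise your assertion that $\pi_{n+1}^*$ of the boundary class is ``up to lower order corrections precisely $\ell$'' is false as stated; the correct input is a divisor relation on $X^{[n,n+1]}$ comparing $\pi_{n+1}^*[\partial X^{[n+1]}]$, $\pi_n^*[\partial X^{[n]}]$ and $\ell$, and in Lehn's proof the identification of the $\ell^{k}$-correspondences with $(\operatorname{ad}\partial)^{k}\Fq_1$ is the hard geometric core, proved by induction on $k$ using that relation, the projection formula, and an excess-intersection analysis along the incidence loci (the same circle of computations that yields $[\partial,\Fq_n(\alpha)]=nL_n(\alpha)+\binom{n}{2}\Fq_n(K_X\alpha)$). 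Without supplying these inputs your argument only rederives the reduction to the key lemma, not the lemma itself.
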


We recall the definition of the Virasoro operators $L_n(\alpha)$ for quasi-projective surfaces following \cite{N1}. Let $\Delta_+$ and $\Delta_-$ be the two ``$\otimes$-Hom'' adjoints of the cup product 
\[
\cup:H^*_c(X,\BQ)\otimes H^*(X,\BQ)\to H^*_c(X,\BQ)
\]
with respect to the first and the second factor. Concretely, if $\{\beta_i\}$ is a basis of $H^*(X,\BQ)$ with dual basis $\{\beta^i\}$ in $H^*_c(X,\BQ)$ with respect to the Poincar\'e pairing, then
\begin{align}
\Delta_+: H^*(X,\BQ)&\to H^*(X,\BQ)\otimes H^*_c(X,\BQ)\label{Delta+}\\
\alpha&\mapsto \sum_i\beta_i\otimes\beta^i\alpha,\nonumber
\end{align}
and
\begin{align}
\Delta_-: H^*_c(X,\BQ)&\to H^*_c(X,\BQ)\otimes H^*_c(X,\BQ)\label{Delta-}\\
\alpha&\mapsto \sum_i\beta^i\otimes\beta_i\alpha.\nonumber
\end{align}
For integer $m,n\in\BZ$ and 
$\alpha,\beta\in H^*(X,\BQ)$ or $H^*_c(X,\BQ)$, formally denote
\[
\Fq_{m}\Fq_{n}(\alpha\otimes\beta):=\Fq_{m}(\alpha)\Fq_{n}(\beta)
\footnote{When $m>0$ and $\alpha\in H^*_c(X,\BQ)$, the operator $\Fq_{m}(\alpha)$ is understood as $\Fq_{m}(\iota\alpha)$ where $\iota:H^*_c(X,\BQ)\to H^*(X,\BQ)$ is the forgetful map. Similar for $n$ and $\beta$.}
\]
and extend it linearly by
\[
\Fq_m\Fq_n(\sum\alpha_i\otimes\beta_i):=\sum \Fq_m\Fq_n(\alpha_i\otimes\beta_i).
\]
For $\alpha\in H^*(X,\BQ)$ if $n\ge0$, and $\alpha\in H^*_c(X,\BQ)$ if $n<0$, define the Virasoro operators as
\[
L_n(\alpha)=
\begin{cases}
\frac{1}{2}\sum_{k\in\BZ}:\Fq_k\Fq_{n-k}:\Delta_+(\alpha), &n\ge0\\
\frac{1}{2}\sum_{k\in\BZ}:\Fq_k\Fq_{n-k}:\Delta_-(\alpha), &n<0\\
\end{cases}
\]
where $:-:$ denotes the normal ordered product, (\emph{i.e.} $:\Fq_k\Fq_{n-k}:=\Fq_k\Fq_{n-k}$ if $k\ge n-k$, and $\Fq_{n-k}\Fq_k$ otherwise).

The interactions between Nakajima operators and Virasoro operators are
\begin{equation} \label{lp}
[L_m(\beta),\Fq_n(\alpha)]=n\Fq_{m+n}(\beta\alpha).
\end{equation}

The following theorem was first proved in \cite{L} for the projective case and then generalized to the quasi-projective case in \cite{N1}.  

\begin{thm}{\cite[Corollary 4.3]{N1}} \label{2.4}
For $n\in\BZ$ and $\alpha\in H^*(X,\BQ)$, one has
\begin{equation}\label{partialp}
[\partial,\Fq_n(\alpha)]=nL_n(\alpha)+\binom{n}{2}\Fq_n(K_X\alpha),
\end{equation}
where $K_X$ is the canonical divisor class of $X$.
\end{thm}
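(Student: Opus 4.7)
The claim is an operator equation on the Fock space $\BH$, linear in $\alpha$. The plan is to prove it by induction on $|n|$, reducing the $n\le -1$ case to the $n\ge 1$ case via Poincar\'e duality, which interchanges $H^*(X,\BQ)$ with $H^*_c(X,\BQ)$ and $\Delta_+$ with $\Delta_-$, so the bulk of the work is the case $n\ge 1$. For the base case $n=1$ the binomial coefficient vanishes and I must show $[\partial,\Fq_1(\alpha)]=L_1(\alpha)$. I would obtain this from Theorem \ref{pr} by comparing cohomological degrees. Decompose $\alpha^{[\bullet]}=\sum_k \alpha^{[\bullet]}_k$, where $\alpha^{[\bullet]}_k$ has degree $\deg\alpha+2k-4$, so that in particular $1^{[\bullet]}_3=\partial$. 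Setting $y=1$ in \eqref{pr1} and separating the degree-$(\deg\alpha+2)$ components on both sides yields
\[
[\alpha^{[\bullet]}_3,\Fq_1(1)] = [\partial,\Fq_1(\alpha)].
\]
The left-hand side can then be evaluated by unpacking the definition \eqref{taut} of $\alpha^{[n]}_3$: a Grothendieck--Riemann--Roch expansion of $\ch(\CO_{Z_n})$ together with Nakajima's commutator (Theorem \ref{nak}) and the definition \eqref{Delta+} of $\Delta_+$ reassembles the result into exactly $L_1(\alpha)$.

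For the inductive step, assume the formula for all $k$ with $1\le k\le n$. Using \eqref{lp} with $m=1$, $\beta=1$, I write $\Fq_{n+1}(\alpha)=n^{-1}[L_1(1),\Fq_n(\alpha)]$, take $[\partial,\,\cdot\,]$ of both sides, and invoke the Jacobi identity:
\[
n[\partial,\Fq_{n+1}(\alpha)] = \bigl[[\partial,L_1(1)],\Fq_n(\alpha)\bigr] + \bigl[L_1(1),[\partial,\Fq_n(\alpha)]\bigr].
\]
Expanding $L_1(1)$ via its Nakajima definition and applying the inductive hypothesis to each Heisenberg operator that appears gives access to $[\partial,L_1(1)]$, while the second bracket, by the inductive hypothesis and \eqref{lp}, becomes $n[L_1(1),L_n(\alpha)]+\binom{n}{2}n\,\Fq_{n+1}(K_X\alpha)$. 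The Virasoro--Virasoro bracket $[L_1(1),L_n(\alpha)]$ reduces to a multiple of $L_{n+1}(\alpha)$ plus an anomaly proportional to $\Fq_{n+1}(K_X\alpha)$, which can be verified independently by applying the same inductive procedure to the Nakajima definition of $L_n(\alpha)$. Assembling the pieces and using the combinatorial identity $\binom{n+1}{2}-\binom{n}{2}=n$ delivers the formula at $n+1$.

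The most delicate point, and the expected obstacle, is tracking the coefficient $\binom{n}{2}$ of the canonical-class correction and matching it against the Virasoro anomaly in $[L_1(1),L_n(\alpha)]$. Geometrically, this coefficient traces back to the Grothendieck--Riemann--Roch contribution $\mathrm{td}_1(X)=-\tfrac12 K_X$ appearing in the definition \eqref{taut} of $\alpha^{[n]}_k$, and it is precisely what distinguishes the quasi-projective setting from the Calabi--Yau case. A cleaner alternative, which I would fall back on if the coefficient bookkeeping becomes unwieldy, is to package the $\Fq_n$ and $L_n$ into vertex-operator generating series and verify the identity as an equality of formal Laurent series in a spectral parameter; this is the approach taken in \cite{N1} and it automates the combinatorial step by converting finite binomials into derivatives of exponentials.
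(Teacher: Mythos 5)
This is a quoted result (Lehn \cite{L} in the projective case, \cite[Corollary 4.3]{N1} in the quasi-projective case); the paper gives no proof of Theorem \ref{2.4}, so your argument has to stand on its own, and as written it has two genuine gaps. First, the base case. The degree bookkeeping extracting $[\alpha^{[\bullet]}_3,\Fq_1(1)]=[\partial,\Fq_1(\alpha)]$ from \eqref{pr1} is fine, but the claim that unpacking \eqref{taut} via Grothendieck--Riemann--Roch together with Theorem \ref{nak} ``reassembles'' the left-hand side into $L_1(\alpha)$ is exactly the hard geometric core of the theorem, not a formal consequence: the Heisenberg relations cannot convert a cup-product operator into the quadratic expression $\tfrac12\sum_{k}:\Fq_k\Fq_{1-k}:\Delta_+(\alpha)$; what is actually needed is an intersection-theoretic analysis of the incidence varieties $Z_{n,n+1}$ (this is where $K_X$ enters, through the excess/Ext bundle), and none of it is sketched. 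Worse, in both \cite{L} and \cite{N1} the formula \eqref{pr1} is \emph{deduced from} the statement you are trying to prove, so starting from Theorem \ref{pr} makes the proposal circular unless you supply an independent proof of it, which you do not.

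Second, the induction does not close. To use the Jacobi identity you need $[\partial,L_1(1)]$, but $L_1(1)=\tfrac12\sum_{k\in\BZ}:\Fq_k\Fq_{1-k}:\Delta_+(1)$ contains Nakajima operators of arbitrarily large index, so ``applying the inductive hypothesis to each Heisenberg operator that appears'' presupposes the theorem for all $k$, which is what is being proved. Moreover the expected anomaly is misplaced: the $L_n(\alpha)$ are by definition quadratic in the $\Fq$'s, so $[L_1(1),L_n(\alpha)]$ is again quadratic plus at most a central term, and for $n\neq -1$ it equals $(1-n)L_{n+1}(\alpha)$; no term proportional to $\Fq_{n+1}(K_X\alpha)$ can occur there. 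Comparing your recursion with the known answer shows that the entire canonical-class correction at level $n+1$ would have to come from $[[\partial,L_1(1)],\Fq_n(\alpha)]$, namely one needs $[[\partial,L_1(1)],\Fq_n(\alpha)]=2n^2L_{n+1}(\alpha)+n^2\Fq_{n+1}(K_X\alpha)$ -- an identity essentially of the same strength as the theorem, and precisely the piece your induction cannot reach. This is why the actual proofs compute the commutators $[[\partial,\Fq_1(\alpha)],\Fq_m(\beta)]$ geometrically and then determine $[\partial,\Fq_n(\alpha)]$ from the Fock-space structure; your duality reduction for $n<0$ is reasonable in spirit but also needs the $H^*$ versus $H^*_c$ adjointness in the quasi-projective setting spelled out rather than asserted.
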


Let $n=1$ and $\beta=1$ in (\ref{lp}), one may represent $\Fq_n(\alpha)$ by $L_1(1)$ and $\Fq_{n-1}(\alpha)$. Equation (\ref{partialp}) replaces $L_1(1)$ by $[\partial,\Fq_1(1)]$. Iterating this process leads to an identity
\[
\frac{1}{n!}\textrm{ad}^n([\partial,\Fq_1(1)])(\Fq_1(\alpha))=\Fq_{n+1}(\alpha).
\]
Therefore, we have
\[
\BH=\partial\BH+\sum_\alpha\Fq_1(\alpha)\BH,
\]
where $\alpha$ runs through a linear basis of $H^*(X,\BQ)$.

The cohomology ring $H^*(X^{[n]},\BQ)$ is generated by the tautological classes $\alpha^{[n]}_k$ when $\alpha$ runs over a linear basis and $k\ge 0$, \cite[Theorem 5.31]{LQW}. Therefore, to describe the cup product in $H^*(X^{[n]},\BQ)$ it suffices to interpret $\alpha^{[n]}_k\cdot \Fq_{n_1}(\alpha_1)\cdots\Fq_{n_s}(\alpha_s)\mathbf{1}$ in terms of Nakajima operators and Virasoro operators. Since the relations among various operators are given in terms of their commutators, we will iterate the following lemma later.

\begin{lem}\label{comm}
Let $\mathfrak{F}$ be a superalgebra acting on a vector space $V$ with supercommutator  
\[
[\Ff,\Fh]=\Ff\Fh-(-1)^{\deg\Ff\cdot\deg\Fh}\Fh\Ff.
\]
Let $\Ff_1,\cdots,\Ff_s,\Fh\in\mathfrak{F}$. Then we have an equality of operators
\begin{equation} \label{c}
\begin{split}
\Fh\Ff_1\cdots \Ff_s=&\sum_{i=1}^s(-1)^{c_i}\Ff_1\cdots \Ff_{i-1}[\Fh,\Ff_i]\Ff_{i+1}\cdots \Ff_s\\ 
&+(-1)^c\Ff_1\cdots \Ff_s\Fh,
\end{split}
\end{equation}
where $c_i=\deg \Fh\cdot(\deg \Ff_1+\cdots +\deg \Ff_{i-1})$ and $c=\deg \Fh\cdot(\deg \Ff_1+\cdots +\deg \Ff_{s})$ are two constants.
\end{lem}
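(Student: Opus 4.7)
The plan is to prove the identity by straightforward induction on $s$, which amounts to iteratively moving $\Fh$ from the left past each $\Ff_i$ using the defining identity of the supercommutator. First I would establish the base case $s=1$ directly: rearranging $[\Fh, \Ff_1] = \Fh\Ff_1 - (-1)^{\deg\Fh\cdot\deg\Ff_1}\Ff_1\Fh$ yields
\[
\Fh\Ff_1 = [\Fh,\Ff_1] + (-1)^{\deg\Fh\cdot\deg\Ff_1}\Ff_1\Fh,
\]
which is exactly the $s=1$ instance of (\ref{c}), since $c_1 = 0$ and $c = \deg\Fh\cdot\deg\Ff_1$.

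For the inductive step, assuming the formula for $s-1$, I would write $\Fh\Ff_1\cdots\Ff_s = (\Fh\Ff_1\cdots\Ff_{s-1})\Ff_s$, apply the induction hypothesis to the parenthesized factor, and then right-multiply each of the resulting $s$ summands by $\Ff_s$. The first $s-1$ commutator summands already sit in the correct form, with the signs $c_1,\ldots,c_{s-1}$ unchanged by the insertion of $\Ff_s$ at the end. The remaining term $(-1)^{c_s}\Ff_1\cdots\Ff_{s-1}\Fh\Ff_s$ is handled by applying the base case once more to the subword $\Fh\Ff_s$; this produces the missing $i=s$ commutator summand, together with the final $(-1)^c\Ff_1\cdots\Ff_s\Fh$ term. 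The only arithmetic to verify is that the sign exponents combine correctly, namely
\[
c_s + \deg\Fh\cdot\deg\Ff_s \;=\; \deg\Fh\cdot(\deg\Ff_1+\cdots+\deg\Ff_s) \;=\; c,
\]
which is immediate from the definitions of $c_s$ and $c$.

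There is no real obstacle in the argument: the identity is a super-Leibniz-type rearrangement, and the induction is purely formal once the sign bookkeeping is done carefully. The only point requiring vigilance is to ensure that the exponents $c_i$ correctly track the partial degree sums as $\Fh$ is commuted past each $\Ff_j$; this is the standard sign convention forced on any $\BZ/2$-graded algebra, so the proof is essentially a matter of consistent notation rather than any nontrivial algebraic input.
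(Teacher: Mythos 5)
Your induction is correct: the base case is just the definition of the supercommutator, the first $s-1$ summands pass unchanged under right multiplication by $\Ff_s$, and the sign bookkeeping $c_s+\deg\Fh\cdot\deg\Ff_s=c$ closes the argument. The paper states Lemma \ref{comm} without proof, treating it as a routine formal identity, and your argument is exactly the standard telescoping induction that is being left implicit, so there is nothing to correct or compare further.
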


\begin{rmk}
In applications, the sign before each term in (\ref{c}) is irrelevant. Instead of keeping track with the complicated constants $c_i$ and $c$, we will simply write
\begin{equation*}
\Fh\Ff_1\cdots \Ff_s=\pm\Ff_1\cdots \Ff_s\Fh+\sum_{i=1}^s\pm\Ff_1\cdots \Ff_{i-1}[\Fh,\Ff_i]\Ff_{i+1}\cdots \Ff_s.
\end{equation*}
\end{rmk}

\subsection{G-decomposition for Hilbert schemes}
In this section, we introduce the notion of $G$-decomposition on cohomology groups of smooth quasi-projective varieties, generalizing perverse decompositions and Hodge decompositions. 

\begin{defn} \label{du}
Let $F$ be a field of characteristic 0. Let $V$ be a finite $F$-algebra. Let
\begin{equation}\label{du1}
V=\bigoplus_{k=0}^{m} \bigoplus_{d=0}^{2n}V_k^d
\end{equation}
be a direct sum decomposition.
\begin{enumerate}[label=(\roman*)]
    \item A vector $v\in V$ is called \emph{pure} with respect to the decomposition \ref{du1} if $v\in V_k^d$ for some $k$ and $d$.
    \item A nonzero pure vector $v\in V$ is of \emph{bidegree} $(d,k)$ if $v\in V_k^d$.
    \item A basis $\beta_i\in V$ is \emph{adapted to the decomposition} (\ref{du1}) if
    \[
    V_k^d=\langle\beta_i\mid \beta_i\in V_k^d\rangle.
    \]
    \item The decomposition (\ref{du1}) is called \emph{strongly multiplicative} if the multiplication satisfies
    \[
    V_k^d\times V_{k'}^{d'}\xrightarrow{\cdot} V_{k+k'}^{d+d'}.
    \]
    \item Let $B:V\times V'\to F$ be a non-degenerate pairing of finite-dimensional $F$-vector spaces. Let 
\begin{equation}\label{9}
V=\bigoplus_{k=0}^{m}\bigoplus_{d=0}^{2n} V_i^d \textrm{ and } W=\bigoplus_{k'=0}^m\bigoplus_{d'=0}^{2n} W_{i'}^{d'}    
\end{equation}
be two decompositions. They are called \emph{dual decompositions with respect to the paring $B$} if the induced pairing on $V_k^d\times W_{k'}^{d'}$ is non-degenerate if $k+k'=m$ and $d+d'=2n$, and is 0 otherwise. 
\end{enumerate}

\end{defn}

\begin{defn}\label{dfn}
Let $X$ be a connected smooth quasi-projective variety of dimension $n$. Let $F$ be a field of characteristic 0.
\begin{enumerate}
\item A \emph{G-decomposition of length $m$ with coefficient $F$} is a pair of dual decompositions 
\begin{align*}
H^*(X,F)=&\bigoplus_{k=0}^m\bigoplus_{d=0}^{2n}G_k H^d(X,F) \\ H^*_c(X,F)=&\bigoplus_{k=0}^m\bigoplus_{d=0}^{2n}G'_{k} H^{d}_c(X,F)
\end{align*}
with respect to the Poincar\'e paring in the sense of Definition \ref{du}.(5) with the properties that
(i) $1\in G_0H^0(X,F)$ and (ii) the forgetful map $\iota:H^*_c(X,F)\to H^*(X,F)$ is compatible with the decompositions $G$ and $G'$, \emph{i.e.} 
    \[\iota:G'_kH^d_c(X,F)\to G_kH^d(X,F).\]  

\item A non-zero pure class $\alpha\in H^*(X,F)$ or $H^*_c(X,F)$ is of {\it $G$-degree $k$}, denoted as $\Fg(\alpha)=k$, if $\alpha\in G_kH^d(X,F)$ or $ G_kH^d_c(X,F)$ for some $d$. 

\end{enumerate}
\end{defn}

\begin{rmk} \label{G}
Since the decomposition $G'$ in Definition \ref{dfn} is uniquely determined by $G$, we will simply say a $G$-decomposition $G_\bullet H^*(X,F)$ without mentioning $G'$ when no confusing arises. The concept of the  $G$-decomposition can be also defined concretely as follows. Let $G_\bullet H^*(X,F)$ and $G'_\bullet H^*_c(X,F)$ be two direct sum decompositions such that $1\in G_0H^0(X,F)$. They form a $G$-decomposition if and only if there exists a basis $\{\beta_i\}$ adapted to the $G_\bullet H^*(X,F)$  and a basis $\{\beta^i\}$ adapted to $G'_\bullet H^*_c(X,F)$ such that
\begin{enumerate}
    \item $\int_X\beta_i\cdot\beta^j=\delta_{i,j},$
    \item $\iota\beta^i$ are pure for all $i$, and
    \item $\Fg(\iota\beta^i)=\Fg(\beta^i)=m-\Fg(\beta_i)$ whenever $\iota\beta^i\neq0$. 
\end{enumerate}
\end{rmk}

\begin{rmk}
The motivation and main examples of $G$-decomposition are the following.
\begin{enumerate}
    \item Let $f:X\to Y$ be a proper flat morphism of relative dimension $r$ between smooth varieties. Then $H^*(X,\BQ)$ and $H^*_c(X,\BQ)$ are equipped with perverse filtrations, whose graded pieces form a $G$-decomposition of length $m=2r$ with coefficient $\BQ$. See Section 4.1 for details. 
    \item Let $X$ be a smooth projective variety of dimension $n$. Then $H^*(X)$ and $H^*_c(X)$ are naturally identified. The Hodge decomposition 
    \[
    G_pH^*(X,\BC)=\bigoplus_{q=0}^n H^{p,q}_{\bar{\partial}}(X)
    \]
    is a $G$-decomposition of length $m=n$ with coefficient $\BC$. Similarly, the conjugate Hodge filtration is also a $G$-decomposition of length $n$ with coefficient $\BC$.
    \item Let $X$ be a complex algebraic variety of dimension $n$ such that the weight filtration in the mixed Hodge structure splits over field $F$. Suppose the 1-dimensional space $H^{2n}_c(X,F)$ is of pure weight $m$. Since the Poincar\'e pairing is a map of mixed Hodge structures, the splitting of the weight filtration is a $G$-decomposition of length $m$ with coefficient $F$.
\end{enumerate}
\end{rmk}

\noindent
{\bf Convention.} 
In this paper, fix the length $m=\dim X$ and coefficient $\BQ$.

\medskip

Let $^1G_\bullet H^*(X,\BQ)$ and $^2G_\bullet H^*(Y,\BQ)$ be two $G$-decompositions. Then we define the $G$-decomposition on the the product $X\times Y$ following the K\"unneth formula
\[
G_kH^*(X\times Y, \BQ)=\sum_{i+j=k} {^1G}_i H^*(X,\BQ)\otimes {^2G}_j H^*(Y,\BQ).\\
\]
It is straightforward to check the compatibility with the compactly supported cohomology.

Let $S$ be a smooth quasi-projective surface. Then a $G$-decomposition on $H^*(S,\BQ)$ canonically induces $G$-decompositions on the Cartesian powers $H^*(S^n,\BQ)$, the symmetric power $H^*(S^{(n)},\BQ)$, and the Hilbert scheme $H^*(S^{[n]},\BQ)$. By abuse of notation, we denote all of them by $G$. On the Cartesian power $S^n$, we define 
\[
G_kH^*(S^n,\BQ)=\langle\alpha_1\boxtimes\cdots\boxtimes\alpha_n\mid \alpha_i\in G_{k_i}H^*(S,\BQ),\sum{k_i}=k\rangle.
\]
By taking the $\mathfrak{S}_n$-invariant part, the decomposition descend to the one for the symmetric product $S^{(n)}$,
\[
G_kH^*(S^{(n)},\BQ)=\langle \textrm{P}(\alpha_1\boxtimes\cdots\boxtimes\alpha_n)\mid \alpha_i\in G_{k_i}H^*(S,\BQ),\sum{k_i}=k\rangle.
\]
The operator $\textrm{P}$ is the symmetrization operator
\begin{equation}\label{P}
\textrm{P}(\alpha_1\boxtimes\cdots\boxtimes\alpha_n)=\frac{1}{n!}\sum_{\sigma\in S_n}(-1)^{\nu(\sigma,\alpha_\bullet)} \alpha_{\sigma(1)}\boxtimes\cdots\boxtimes\alpha_{\sigma(n)},
\end{equation}
where $\nu(\sigma,\alpha_\bullet)=\sum_{i<j,\sigma(i)>\sigma(j)}\deg\alpha_i\deg\alpha_j$, and hence the image of $\textrm{P}$ is in the symmetric product $\textrm{Sym}^nH^*(S,\BQ)=H^*(S^{(n)},\BQ)$. The $G$-decomposition on the product of symmetric products $S^{(a_1)}\times\cdots\times S^{(a_n)}$ is defined similarly by using the K\"unneth formula.

Now we turn to the Hilbert scheme $S^{[n]}$. We use the notation $\nu=1^{a_1}\cdots n^{a_n}$ to denote a partition of $n$, where $a_i$ is the number of times that the number $i$ appears in the partition $\nu$. In particular, we have $\sum_{i=1}^nia_i=n$. Let $l(\nu)=a_1+\cdots+a_n$ be the length of the partition $\nu$. For $\nu=1^{a_1}\cdots n^{a_n}$, denote 
\[
S^{(\nu)}=S^{(a_1)}\times\cdots\times S^{(a_n)}.
\]
By \cite[Theorem 1,4]{GS}, there is a canonical decomposition
\[
H^d(S^{[n]},\BQ)=\bigoplus_\nu H^{d-2n+2l(\nu)}(S^{(\nu)},\BQ).
\]

\begin{prop} \label{2.11}
Let $S$ be a smooth quasi-projective surface. Then
\begin{equation} \label{hilb}
G_kH^d(S^{[n]},\BQ)=\bigoplus_\nu G_{k-n+l(\nu)} H^{d-2n+2l(\nu)}(S^{(\nu)},\BQ)
\end{equation}
defines a the $G$-decomposition on $S^{[n]}$.
\end{prop}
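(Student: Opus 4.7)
The plan is to verify the three axioms of a $G$-decomposition (Definition \ref{dfn}) for \eqref{hilb}, after first writing down the companion decomposition on compactly supported cohomology. Concretely, I would set
\[
G'_k H^d_c(S^{[n]},\BQ) := \bigoplus_\nu G'_{k-n+l(\nu)} H^{d-2n+2l(\nu)}_c(S^{(\nu)},\BQ),
\]
where the $G'$-decompositions on each $S^{(\nu)}$ are induced from the one on $S$ by the same K\"unneth/symmetrization procedure used just before the proposition for $G_\bullet$. The admissible ranges match: the condition $0 \le k-n+l(\nu)\le 2l(\nu) = \dim S^{(\nu)}$ for each $\nu$ translates, as $\nu$ varies over partitions of $n$, to $0 \le k \le 2n = \dim S^{[n]}$.

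The conditions concerning the unit and the forgetful map are immediate. The class $1 \in H^0(S^{[n]},\BQ)$ sits in the G\"ottsche summand for $\nu = 1^n$ (so $l(\nu)=n$), where the index shift $k-n+l(\nu)=k$ forces $k=0$; what remains is $1 \in G_0 H^0(S^{(n)},\BQ)$, which follows from the K\"unneth construction of $G_\bullet$ on $S^{(n)}$ together with $1 \in G_0H^0(S,\BQ)$. Analogously, G\"ottsche's isomorphism is natural with respect to $H^*_c \to H^*$ and decomposes it as a direct sum of forgetful maps $H^*_c(S^{(\nu)}) \to H^*(S^{(\nu)})$, each of which is already $G$-compatible by construction.

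The substantive step is the Poincar\'e duality axiom. I would argue that the Poincar\'e pairing on $S^{[n]}$ is block-diagonal along G\"ottsche's decomposition and, on the $\nu$-piece, equals a non-zero scalar multiple of the Poincar\'e pairing on $S^{(\nu)}$. To see this, I would use the Nakajima basis $\Fq_{n_1}(\alpha_1)\cdots\Fq_{n_s}(\alpha_s)\mathbf{1}$ and its compactly supported analogue: repeated application of the commutation relation in Theorem \ref{nak} reduces the pairing of two such monomials either to zero (if the underlying partitions differ) or to a product of $S$-pairings $\int_S \alpha_i \cdot \beta_j$ times a non-zero combinatorial factor $\prod_i i^{a_i}a_i!$, which after symmetrization by the operator $\mathrm{P}$ of \eqref{P} recovers the Poincar\'e pairing on $S^{(\nu)}$ up to that scalar. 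Since the index shift $k \mapsto k-n+l(\nu)$ is symmetric on the two sides of the pairing, the conditions $k+k'=2n$ and $d+d'=4n$ on $S^{[n]}$ translate exactly to $(k-n+l(\nu))+(k'-n+l(\nu))=\dim S^{(\nu)}$ and $(d-2n+2l(\nu))+(d'-2n+2l(\nu))=2\dim S^{(\nu)}$ on each $S^{(\nu)}$, so duality of $G$ and $G'$ on each $S^{(\nu)}$ implies the required duality on $S^{[n]}$.

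The main obstacle is the combinatorial bookkeeping needed to identify the Nakajima-monomial pairing with the Poincar\'e pairing on $S^{(\nu)}$, in particular tracking signs and the scalar $\prod_i i^{a_i}a_i!$ coming from the commutator relations. Since only non-vanishing is needed for duality, I expect this to be routine rather than a genuine difficulty; if the combinatorics become heavy, a cleaner alternative is to build bases of $H^*(S^{[n]},\BQ)$ and $H^*_c(S^{[n]},\BQ)$ directly out of adapted bases on each $S^{(\nu)}$ and apply Remark \ref{G}.
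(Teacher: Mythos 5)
Your proposal is correct, and its overall strategy coincides with the paper's: both reduce everything through G\"ottsche's decomposition to the $G$-decompositions on the symmetric products $S^{(\nu)}$ and exploit their duality, with the index shift $k\mapsto k-n+l(\nu)$ doing the bookkeeping exactly as you record. The difference is in how the duality axiom is handled. The paper is purely formal: it \emph{defines} $G'_kH^d_c(S^{[n]},\BQ)$ as the dual $\left(G_{2n-k}H^{4n-d}(S^{[n]},\BQ)\right)^\vee$, rewrites this via G\"ottsche and duality on each $S^{(\nu)}$ as $\bigoplus_\nu G'_{k-n+l(\nu)}H^{d-2n+2l(\nu)}_c(S^{(\nu)},\BQ)$, and then reads off compatibility of the forgetful map from the corresponding compatibility on each $S^{(\nu)}$; in doing so it implicitly uses that the G\"ottsche decompositions of $H^*(S^{[n]},\BQ)$ and $H^*_c(S^{[n]},\BQ)$ are block-dual under the Poincar\'e pairing, with blocks matching the pairings on the $S^{(\nu)}$. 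You instead take that block-diagonality as the substantive statement and propose to verify it directly, via the Nakajima monomial bases on $\BH$ and its compactly supported analogue and the commutation relations (picking up the nonzero factor $\prod_i i^{a_i}a_i!$); since only nondegeneracy block-by-block is needed, this is a standard and sound computation (one uses the adjointness of $\Fq_{-k}$ and $\Fq_k$ with respect to the $H^*$--$H^*_c$ pairing rather than Theorem \ref{nak} verbatim, but the mechanism is the same). So your write-up is a more explicit, self-contained version of the same argument: it buys a concrete justification of the duality step the paper leaves implicit, at the cost of some combinatorial bookkeeping that the paper's dualize-and-compare formulation avoids; your closing alternative (adapted bases on each $S^{(\nu)}$ plus Remark \ref{G}) is essentially the paper's route.
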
 

\begin{proof}
The proof is formal. Since $\dim S^{[n]}=2n$ and $\dim S^{(\nu)}=2l(\nu)$, 
\begin{equation}\label{hilb1}
\begin{split}
G_kH^d_c(S^{[n]},\BQ)=&\left(G_{2n-k}H^{4n-d} (S^{[n]},\BQ)\right)^\vee\\
=&\left(\bigoplus_\nu G_{n-k+l(\nu)}H^{2n-d+2l(\nu)} (S^{(\nu)},\BQ)\right)^\vee\\
=&\bigoplus_\nu G_{k-n+l(\nu)}H^{d-2n+2l(\nu)}_c (S^{(\nu)},\BQ),
\end{split}
\end{equation}
where $(-)^\vee$ is the corresponding component in the dual decomposition in the sense of Lemma \ref{du}. Since the decomposition on $S^{(\nu)}$ is a $G$-decomposition, 
\[
\iota:G_{k-n+l(\nu)}H^{d-2n+2l(\nu)}_c (S^{(\nu)},\BQ)\to G_{k-n+l(\nu)}H^{d-2n+2l(\nu)} (S^{(\nu)},\BQ),
\]
we conclude by comparing (\ref{hilb}) and (\ref{hilb1}) that 
\[
\iota: G_kH^d_c(S^{[n]},\BQ)\to G_kH^d(S^{[n]},\BQ),
\]
as desired.
\end{proof}

Proposition \ref{2.11} endows a third grading, the $G$-grading, on the vector space  
\[
\BH=\bigoplus_{n,d,k}\BH_{n,d,k}=\bigoplus_{n,d,k}G_kH^d(S^{[n]},\BQ),
\]
where $n,d,k$ are called the conformal weight, the cohomological degree, and the $G$-degree respectively. We say that a linear operator $A:\BH\to\BH$ is \emph{of degree} $(\Fn,\Fd,\Fk)$ if
\[
A:\BH_{n,d,k}\to\BH_{n+\Fn,d+\Fd,k+\Fk}
\]
for any tridegree $(n,d,k)$.

\section{Tautological classes and multiplicativity of G-decompositions}
\subsection{Nakajima operators and the boundary operator}
In this section we study how Nakajima operators $\Fq_{m}(\alpha)$ and the boundary operator $\partial$ act on $G$-decompositions. We first recall that the coefficients of $t^n$ in the isomorphism
\begin{align*}
\textrm{Sym}^*(H\otimes t\BQ[t])&\cong\BH\\
(\alpha_1t^{n_1})\cdots (\alpha_st^{n_s})&\mapsto \Fq_{n_1}(\alpha_1)\cdots\Fq_{n_s}(\alpha_s)\mathbf{1}.    
\end{align*}
induces an isomorphism
\begin{align}
H^*(S^{[n]},\BQ)&=\bigoplus_{\nu}\bigotimes_{i=1}^n \textrm{Sym}^{a_i} H^*(S,\BQ)[2l(\nu)-2n],\label{4}\\
 \Fq_{n_1}(\alpha_1)\cdots\Fq_{n_s}(\alpha_s)\mathbf{1}&\mapsto \displaystyle\bigotimes_{i=1}^n\textrm{P}\left(\boxtimes_{n_j=i}\alpha_j\right)[2s-2n], \label{5}    
\end{align}
where $\nu=(n_1,\cdots,n_s)=1^{a_1}\cdots n^{a_n}$. By the description of the decomposition (\ref{hilb}), we have the following.

\begin{prop} \label{4.1}
Let $S$ be a smooth quasi-projective surface. Let $G_\bullet H^*(S,\BQ)$ be any $G$-decomposition. Let $(n_1,\cdots,n_s)$ be a partition of $n$, and let $\alpha_i\in G_{k_i}H^{d_i}(S,\BQ)$. Then  
\[
\Fq_{n_1}(\alpha_1)\cdots\Fq_{n_s}(\alpha_s)\mathbf{1}\in G_kH^d(S^{[n]},\BQ),
\]
where $d=\sum(d_i+2n_i-2)=d_1+\cdots+d_s+2n-2s$ and $k=\sum(k_i+n_i-1)=k_1+\cdots+k_s+n-s$.
\end{prop}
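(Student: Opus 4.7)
The proof is essentially a bookkeeping argument: the plan is to transfer the statement, via the Göttsche--Nakajima isomorphism (\ref{4})--(\ref{5}), from the Hilbert scheme to a product of symmetric powers where the $G$-decomposition was defined directly, and then to match the two indices using formula (\ref{hilb}).

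Concretely, I would first set $\nu=(n_1,\dots,n_s)=1^{a_1}\cdots n^{a_n}$, so $l(\nu)=s$, and use (\ref{5}) to identify
\[
\Fq_{n_1}(\alpha_1)\cdots\Fq_{n_s}(\alpha_s)\mathbf{1} \;\longleftrightarrow\; \bigotimes_{i=1}^{n}\mathrm{P}\bigl(\boxtimes_{n_j=i}\alpha_j\bigr)
\]
inside the summand $H^{*}(S^{(\nu)},\BQ)$ of the Göttsche decomposition, shifted by $2l(\nu)-2n$. Next I would verify the cohomological degree: the class $\bigotimes_i \mathrm{P}(\boxtimes_{n_j=i}\alpha_j)$ has degree $\sum_j d_j$ in $H^{*}(S^{(\nu)},\BQ)$, and the Göttsche shift $[2l(\nu)-2n]=[2s-2n]$ places it in $H^{d}(S^{[n]},\BQ)$ with $d=\sum_j d_j+2n-2s$, which is exactly the stated formula.

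For the $G$-degree, the essential point is that the symmetrization operator $\mathrm{P}$ defined in (\ref{P}) preserves $G$-degree: this is immediate because the $G$-decomposition on a Cartesian/symmetric power was built from the Künneth formula using basis elements adapted to $G_\bullet H^*(S,\BQ)$, and $\mathrm{P}$ is a signed average of coordinate permutations which permutes such basis tensors among themselves within a fixed $G$-piece. Therefore $\bigotimes_i \mathrm{P}(\boxtimes_{n_j=i}\alpha_j)$ lies in $G_{\sum_j k_j} H^{*}(S^{(\nu)},\BQ)$. Plugging $l(\nu)=s$ into the formula (\ref{hilb})
\[
G_k H^d(S^{[n]},\BQ)=\bigoplus_\nu G_{k-n+l(\nu)} H^{d-2n+2l(\nu)}(S^{(\nu)},\BQ),
\]
the image lands in $G_k H^d(S^{[n]},\BQ)$ precisely when $k-n+s=\sum_j k_j$, i.e.\ $k=\sum_j k_j+n-s$, as claimed.

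There is no real obstacle beyond double-checking the index bookkeeping and the fact that $\mathrm{P}$ is $G$-pure. The statement is essentially a translation lemma, but it is the foundation on which the later degree computations for the Virasoro, boundary, and cupping operators in Theorem \ref{1.3} rest, so the proof needs only to make the identifications (\ref{5}) and (\ref{hilb}) fit together cleanly.
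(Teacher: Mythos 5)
Your proposal is correct and follows essentially the same route as the paper: translate the Nakajima monomial via the isomorphism (\ref{4})--(\ref{5}) into the summand $H^*(S^{(\nu)},\BQ)$ of the G\"ottsche decomposition, note that the symmetrized pure tensor has $G$-degree $\sum_j k_j$ there (which in the paper is essentially the definition of the $G$-decomposition on symmetric powers), and then read off both indices from the shift $[2s-2n]$ and formula (\ref{hilb}). No gaps; the bookkeeping matches the paper's proof.
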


\begin{proof}
The cohomological degree $d$ follows directly from the decomposition (\ref{4}) and (\ref{5}). To calculate the $G$-degree, the factor $\textrm{P}\left(\prod_{n_j=i}\alpha_j\right)$ has $G$-degree $\sum_{n_j=i}k_j$ in $H^*(S^{(a_i)},\BQ)$. Therefore by (\ref{hilb}),  
\[
\bigotimes_{i=1}^n\textup{P}\left(\boxtimes_{n_j=i}\alpha_j\right)[2s-2n]
\]
is of $G$-degree 
\[
\sum_{i=1}^n\sum_{n_j=i}k_j+n-l(\nu)=\sum_{j=1}^s k_j+n-s
\]
as desired.
\end{proof}

\begin{prop}\label{4.2}
Let $S$ be a smooth quasi-projective surface equipped with a $G$-decomposition on $H^*(S,\BQ)$. Let $\alpha\in G_kH^d(S,\BQ)$ if $n\ge0$, $\alpha\in G_kH^d_c(S,\BQ)$ if $n<0$. Then $\Fq_n(\alpha)\in \textup{End}_\BQ\BH$ is a linear operator of degree $(n,d+2n-2,k+n-1)$.
\end{prop}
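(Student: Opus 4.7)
The plan is to check the claimed tridegree $(n,d+2n-2,k+n-1)$ by applying $\Fq_n(\alpha)$ to a linear basis of $\BH$ of the form $\gamma=\Fq_{n_1}(\alpha_1)\cdots\Fq_{n_s}(\alpha_s)\mathbf{1}$ where each $\alpha_i$ lies in a basis of $H^*(S,\BQ)$ adapted to the $G$-decomposition, so that Proposition \ref{4.1} computes $\deg\gamma$ and $\Fg(\gamma)$ in closed form. The conformal weight is visible from the definition, so only the cohomological degree and $G$-degree need checking; I split into the three cases $n>0$, $n=0$, $n<0$.

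For $n>0$, the product $\Fq_n(\alpha)\gamma$ is again a basis monomial with one extra factor prepended. Proposition \ref{4.1} applied before and after then directly gives cohomological degree $(d+2n-2)+\deg\gamma$ and $G$-degree $(k+n-1)+\Fg(\gamma)$, which is exactly the claim. The case $n=0$ is vacuous since $\Fq_0=0$ by definition.

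For $n<0$, the expression $\Fq_n(\alpha)\gamma$ is no longer a basis monomial, so the strategy is to commute $\Fq_n(\alpha)$ past the $\Fq_{n_i}(\alpha_i)$'s using Lemma \ref{comm}. This produces a main term $\pm\Fq_{n_1}(\alpha_1)\cdots\Fq_{n_s}(\alpha_s)\Fq_n(\alpha)\mathbf{1}$ which vanishes because $\Fq_n(\alpha)\mathbf{1}\in H^*(S^{[n]})=0$ for $n<0$, plus correction terms involving commutators $[\Fq_n(\alpha),\Fq_{n_i}(\alpha_i)]$. By the Heisenberg relation (Theorem \ref{nak}), each such commutator is a scalar multiple of the identity, nonzero only when $n_i=-n$ and $\int_S\alpha\alpha_i\neq 0$; by Definition \ref{du}.(5) applied to the $G$-decomposition on $S$, this latter condition forces $k+k_i=2$ and $d+d_i=4$. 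Each surviving summand is therefore a scalar times the basis monomial with the $i$-th factor deleted, and a direct bookkeeping with Proposition \ref{4.1} gives cohomological degree $(\sum_{j\ne i}d_j)+2(N+n)-2(s-1)=\deg\gamma+2n-2+d$ and $G$-degree $(\sum_{j\ne i}k_j)+(N+n)-(s-1)=\Fg(\gamma)+n-1+k$, matching the claim.

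The routine part is the three-line arithmetic of the previous paragraph; the only substantive step is the observation that the duality component of the $G$-decomposition (the interaction between $G$ on $H^*$ and $G'$ on $H^*_c$ via the Poincaré pairing) is precisely what converts the Heisenberg constraint $\int_S\alpha\alpha_i\neq 0$ into the degree-matching identities $k+k_i=2$ and $d+d_i=4$. This is the only place where the defining properties of a $G$-decomposition (beyond being a bigraded direct sum decomposition) are used, and it is the main obstacle in the sense that without the duality axiom the $n<0$ case would fail.
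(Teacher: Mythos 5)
Your proposal is correct and follows essentially the same route as the paper: compute on the Nakajima monomial basis adapted to the $G$-decomposition, read off the $n>0$ case from Proposition \ref{4.1}, and for $n<0$ commute $\Fq_n(\alpha)$ through via Lemma \ref{comm} and Theorem \ref{nak}, using the vanishing of $\Fq_n(\alpha)\mathbf{1}$ and the duality axiom of the $G$-decomposition to turn $\int_S\alpha\beta_i\neq 0$ into $d+d_i=4$, $k+k_i=2$. Your degree bookkeeping matches the paper's, so nothing further is needed.
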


\begin{proof}
The conformal weight and cohomological degree of $\Fq_n(\alpha)$ follows from the definition. We calculate the $G$-degree of $\Fq_n(\alpha)$ from its action of on a linear basis \[\Fq_{m_1}(\beta_1)\cdots\Fq_{m_s}(\beta_s)\mathbf{1},\]
where $m_i$ are positive integers and $\beta_i$ run over a linear basis adapted to the $G$-decomposition $G_\bullet H^*(S,\BQ)$. Denote $\Fg(\beta_i)=k_i$. By Proposition \ref{4.1}, we have  
\[
\Fg(\Fq_{m_1}(\beta_1)\cdots\Fq_{m_s}(\beta_s)\mathbf{1})=\sum_{i=1}^s(k_i+m_i-1).
\]

\begin{enumerate}
\item Case 1: $n>0$. It follows from Proposition \ref{4.1} that 
\[
\Fg\left(\Fq_n(\alpha)\Fq_{m_1}(\beta_1)\cdots \Fq_{m_s}(\beta_s)\mathbf{1}\right)=k+n-1+\sum(k_i+m_i-1).\]
So $\Fq_n(\alpha)$ increases the $G$-degree by $k+n-1$ as desired.
\item Case 2: $n<0$. Then $\Fq_n(\alpha)\mathbf{1}=0$ by degree reason. By Theorem \ref{nak} and Lemma \ref{comm}, we have 
\begin{equation*}
\begin{split}
&\Fq_n(\alpha)\Fq_{m_1}(\beta_1)\cdots \Fq_{m_s}(\beta_s)\mathbf{1}\\
=&\pm\sum_{i=1}^s \Fq_{m_1}(\beta_1)\cdots\Fq_{m_{i-1}}(\beta_{i-1})[\Fq_{n}(\alpha),\Fq_{m_i}(\beta_i)]\Fq_{m_{i+1}}(\beta_{i+1})\cdots\Fq_{m_s}(\beta_s)\mathbf{1}\\
=&\pm\sum_{i=1}^s \Fq_{m_1}(\beta_1)\cdots\Fq_{m_{i-1}}(\beta_{i-1})\delta_{n,-m_i}\int_S{\alpha\beta_i}\Fq_{m_{i+1}}(\beta_{i+1})\cdots\Fq_{m_s}(\beta_s)\mathbf{1}\\
=&\pm\sum_{i=1}^s \delta_{n,-m_i}\int_S{\alpha\beta_i}\cdot\Fq_{m_1}(\beta_1)\cdots\Fq_{m_{i-1}}(\beta_{i-1})\Fq_{m_{i+1}}(\beta_{i+1})\cdots\Fq_{m_s}(\beta_s)\mathbf{1}
\end{split}
\end{equation*}
The constant $\delta_{n,-m_i}\int_S{\alpha\beta_i}$ is nonzero only when $n+m_i=0$ and the Poincar\'e pairing $\int\alpha\beta_i\ne 0$, which is further equivalent to $d+\deg\beta_i=4$ and $k+\Fg(\beta_i)=2$ by Lemma \ref{du}. Thus
\[
\begin{split}
&\Fg\left(\Fq_{m_1}(\beta_1)\cdots\Fq_{m_{i-1}}(\beta_{i-1})\Fq_{m_{i+1}}(\beta_{i+1})\cdots\Fq_{m_s}(\beta_s)\mathbf{1}\right)\\
=&\sum_{j=1}^s(k_j+m_j-1)-(k_i+m_i-1)\\
=&\sum_{j=1}^s(k_j+m_j-1)-(2-k+4-n-1)\\
=&\sum_{j=1}^s(k_j+m_j-1)+(k+n-1).
\end{split}
\]
Therefore all nonzero summands have the same $G$-degree, so does their sum. We conclude that the operator $\Fq_n(\alpha)$ increases the $G$-degree by $k+n-1$.
\end{enumerate}
\end{proof}

\begin{prop} \label{4.3}
Let $S$ be a smooth quasi-projective surface with a $G$-decomposition on $H^*(S,\BQ)$. Let $\alpha\in G_kH^d(S,\BQ)$ if $n\ge0$, and $G_kH^d_c(S,\BQ)$ if $n<0$. 
\begin{enumerate}
    \item $L_n(1)\in\textup{End}_\BQ(\BH)$ is an operator of degree $(n,2n,n)$.
    \item Suppose further that $G_\bullet H^*(S,\BQ)$ is strongly multiplicative. Then $L_n(\alpha)\in \textup{End}_\BQ(\BH)$ is an operator of degree $(n,d+2n,k+n)$.
\end{enumerate}
\end{prop}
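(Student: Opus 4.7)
The plan is to expand $L_n(\alpha)$ via its definition as a sum of normal-ordered products $:\Fq_k\Fq_{n-k}:\Delta_\pm(\alpha)$ (the sum is effectively finite on each element of $\BH$) and then to apply Proposition \ref{4.2} to each Nakajima factor. Once every summand is shown to lie in a single tridegree component $\BH_{n',d',k'}$ of $\BH$, the operator $L_n(\alpha)$ itself has that tridegree.

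For part (1), I would fix a basis $\{\beta_i\}$ of $H^*(S,\BQ)$ adapted to $G$ with dual basis $\{\beta^i\}$ of $H^*_c(S,\BQ)$ adapted to $G'$ as in Remark \ref{G}, so that $\deg\beta^i=4-\deg\beta_i$ and $\Fg(\beta^i)=2-\Fg(\beta_i)$. With this choice $\Delta_+(1)=\sum_i\beta_i\otimes\beta^i$, and Proposition \ref{4.2} applied to each factor of $:\Fq_k(\beta_i)\Fq_{n-k}(\beta^i):$ gives contributions whose cohomological degrees sum to $(\deg\beta_i+2k-2)+(\deg\beta^i+2(n-k)-2)=2n$ and whose $G$-degrees sum to $(\Fg(\beta_i)+k-1)+(\Fg(\beta^i)+(n-k)-1)=n$, both independently of $i$ and $k$. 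Hence every summand, and therefore $L_n(1)$ itself, has tridegree $(n,2n,n)$.

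For part (2), the only new ingredient is the factor $\beta^i\alpha$ appearing in $\Delta_+(\alpha)=\sum_i\beta_i\otimes\beta^i\alpha$. I need $\beta^i\alpha\in H^*_c(S,\BQ)$ to be pure in the $G'$-decomposition with $\Fg(\beta^i\alpha)=\Fg(\beta^i)+k$ and $\deg(\beta^i\alpha)=\deg\beta^i+d$. This is where strong multiplicativity of $G$ on $H^*(S,\BQ)$ enters: for every pure $\gamma\in H^*(S,\BQ)$, strong multiplicativity yields purity of $\alpha\gamma$ in $G$, and then the associativity identity
\[
\int_S(\beta^i\alpha)\cdot\gamma=\int_S\beta^i\cdot(\alpha\gamma),
\]
together with the duality between $G$ and $G'$, forces the expansion of $\beta^i\alpha$ in $\{\beta^j\}$ to be supported on those $\beta^j$ with the predicted bidegree, giving the claimed purity. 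Feeding this purity into Proposition \ref{4.2} for $\Fq_{n-k}(\beta^i\alpha)$ and adding the tridegree of $\Fq_k(\beta_i)$ produces $(n,d+2n,k+n)$ on every summand, hence on $L_n(\alpha)$. The case $n<0$ runs in parallel using $\Delta_-$ in place of $\Delta_+$.

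The main obstacle is the purity claim for $\beta^i\alpha$, since it is the single point at which strong multiplicativity is genuinely used and it must be established in the compactly supported variant $G'$ rather than in $G$. Some bookkeeping is also needed with the $H^*_c/H^*$ distinction, because the cup product in play is the $H^*_c\otimes H^*\to H^*_c$ module action and the sign of $k$ controls which Nakajima convention (and hence which forgetful-map interpretation from the footnote) applies. Once these two points are settled, the rest of the proof is a routine combination of the tridegree formulas of Proposition \ref{4.2} with the duality relations of Remark \ref{G}.
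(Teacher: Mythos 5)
Your proposal is correct and follows essentially the same route as the paper: expand $L_n(\alpha)$ via $\Delta_\pm$ in a dual pair of adapted bases and apply Proposition \ref{4.2} to each Nakajima factor, using $\deg\beta_i+\deg\beta^i=4$ and $\Fg(\beta_i)+\Fg(\beta^i)=2$ to see every summand has the stated tridegree, with $\alpha=1$ requiring no multiplicativity. Your extra step establishing purity of $\beta^i\alpha$ in the $G'$-decomposition on $H^*_c$ via the pairing identity $\int_S(\beta^i\alpha)\gamma=\int_S\beta^i(\alpha\gamma)$ is a legitimate refinement of a point the paper passes over by simply invoking strong multiplicativity.
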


\begin{proof}
We check for $n\ge0$. By Remark \ref{G}, there exists basis $\{\beta_i\}$ of $H^*(S,\BQ)$ and $\{\beta^i\}$ of $H^*_c(S,\BQ)$ both adapted to the $G$-decomposition such that $\Fg(\beta_i)+\Fg(\beta^i)=2$ and $\deg\beta_i+\deg\beta^i=4$. By (\ref{Delta+}), we have 
\[
\Delta_+(\alpha)=\sum_i\beta_i\otimes\beta^i\alpha.
\]
By the definition of Virasoro operator,
\[
L_n(\alpha)=\frac{1}{2}\sum_{m\in\BZ}\sum_i\Fq_m(\beta_i)\Fq_{n-m}(\beta^i\alpha).
\]
For simplicity, we denote $d_i=\deg \beta_i$, $d^i=\deg \beta^i$, $k_i=\Fg(\beta_i)$, and $k^i=\Fg(\beta^i)$. We prove (2) first. It follows from strong multiplicativity of the $G$-decomposition and Proposition \ref{4.2} that the operator $\Fq_m(\beta_i)$ is of degree 
\[
(m,d_i+2m-2,k_i+m-1)
\]
and $\Fq_{n-m}(\beta^i\alpha)$
is of degree 
\[
(n-m,d^i+d+2(n-m)-2,k^i+k+(n-m)-1).
\]
Since $d_i+d^i=4$ and $k_i+k^i=2$,  
\[
\Fq_m(\beta_i)\Fq_{n-m}(\beta^i\alpha)
\]
is an operator of degree $(n,d+2n,k+n)$ for all $i$. We conclude that $L_n(\alpha)$ is of degree $(n,d+2n,k+n)$. 

We see from the proof that when $\alpha=1$, the strong multiplicativity is not used. This proves (1). The case $n<0$ is similar.
\end{proof}

\begin{prop} \label{boundary}
Let $S$ be a smooth quasi-projective surface equipped with a $G$-decomposition on $H^*(S,\BQ)$. Let $\partial S^{[n]}$ be the boundary divisor. Then
\[
\partial S^{[n]}\in G_1H^2(S^{[n]},\BQ).
\]
\end{prop}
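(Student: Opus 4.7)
The plan is to realize $\partial S^{[n]}$ (up to a nonzero scalar, as the class $1^{[n]}_1 \in H^2(S^{[n]},\BQ)$ by which the operator $\partial$ acts) as the result of applying the operator $\partial$ to the fundamental class $\mathbf{1}_{S^{[n]}} \in H^0(S^{[n]},\BQ)$, and then to track the tridegree of each term produced by commuting $\partial$ past Nakajima operators. By G\"ottsche's formula, $\mathbf{1}_{S^{[n]}}$ is a nonzero scalar multiple of $\Fq_1(1)^n \mathbf{1}$, so
\[
\partial S^{[n]} \;\propto\; \partial\, \Fq_1(1)^n\mathbf{1}.
\]

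Using Lemma \ref{comm} to move $\partial$ to the right past each $\Fq_1(1)$, together with the commutator formula from Theorem \ref{2.4} specialized to $n=1$ and $\alpha=1$,
\[
[\partial,\Fq_1(1)] \;=\; 1\cdot L_1(1) + \tbinom{1}{2}\Fq_1(K_S) \;=\; L_1(1),
\]
and noting that $\partial\mathbf{1}=0$ (since $\partial$ acts on $H^*(S^{[0]},\BQ)=\BQ$ by cup product with a class in degree $2$, which must vanish as $H^*(S^{[0]})$ is concentrated in degree $0$), we get
\[
\partial\, \Fq_1(1)^n\mathbf{1} \;=\; \sum_{i=1}^{n}\pm\,\Fq_1(1)^{\,i-1}\,L_1(1)\,\Fq_1(1)^{\,n-i}\,\mathbf{1}.
\]
By Proposition \ref{4.2} applied to $1\in G_0H^0(S,\BQ)$, the operator $\Fq_1(1)$ has tridegree $(1,0,0)$, and by Proposition \ref{4.3}(1) the operator $L_1(1)$ has tridegree $(1,2,1)$. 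Starting from $\mathbf{1}\in G_0H^0(S^{[0]},\BQ)$, each summand lies in $G_1H^2(S^{[n]},\BQ)$, and hence so does their sum. This yields $\partial S^{[n]} \in G_1H^2(S^{[n]},\BQ)$.

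The points requiring care are: identifying the geometric boundary class $\partial S^{[n]}$ with (a nonzero multiple of) the class by which $\partial$ acts on $H^*(S^{[n]})$, and verifying $\partial\mathbf{1}=0$. The key structural reason the argument goes through is the vanishing of the coefficient $\binom{1}{2}=0$, which kills the $\Fq_1(K_S)$ term; this is precisely why the proposition holds \emph{without} any assumption on the $G$-degree of $K_S$, in contrast with the stronger statement in Theorem \ref{1.3}(4) that the full operator $\partial$ has tridegree $(0,2,1)$, which genuinely does require $K_S \in G_1H^2(S,\BQ)$.
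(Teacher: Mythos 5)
Your argument is correct and non-circular, but it is not the paper's route: the paper disposes of this proposition by citing \cite[Lemma 2.1]{SZ}, and the mechanism behind that citation (and behind the paper's later use of the class in Theorem \ref{6.1}) is the direct one: $\partial S^{[n]}$ is, up to a nonzero scalar, $\Fq_1(1)^{n-2}\Fq_2(1)\mathbf{1}$, so Proposition \ref{4.1} immediately gives $G$-degree $(n-2)\cdot 0+(0+2-1)=1$ and cohomological degree $2$, with no operator gymnastics. Your route instead writes the class as $\partial$ applied to the unit $\Fq_1(1)^n\mathbf{1}$ and commutes, using $[\partial,\Fq_1(1)]=L_1(1)$ (the $\binom{1}{2}=0$ observation is exactly the right structural point, and it explains why no hypothesis on $K_S$ is needed here, unlike Proposition \ref{4.7}); the inputs you invoke, Propositions \ref{4.2} and \ref{4.3}(1), precede this statement and use neither strong multiplicativity nor $K_S$, so there is no circularity. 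What your approach costs is one extra nontrivial identification, namely that the degree-$2$ component of $1^{[n]}$ equals $-\tfrac{1}{2}\,\partial S^{[n]}$; this is Lehn's result, which the paper itself uses freely (proof of Theorem \ref{6.1}), so it is admissible, but the Nakajima-basis description of the boundary class makes the conclusion a one-line consequence of Proposition \ref{4.1}. Two small nits: in the paper's indexing $\deg\alpha^{[n]}_k=\deg\alpha+2k-4$, so the degree-$2$ component of $1^{[n]}$ is $1^{[n]}_3$, not $1^{[n]}_1$; and the signs in your commutator expansion are all $+$ anyway since $\partial$ and $\Fq_1(1)$ have even cohomological degree (irrelevant to the conclusion, since each summand lands in $G_1H^2(S^{[n]},\BQ)$ and hence so does the sum).
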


\begin{proof}
This is \cite[Lemma 2.1]{SZ}.
\end{proof}

\begin{prop} \label{4.5}
Let $S$ be a smooth quasi-projective surface equipped with a strongly multiplicative $G$-decomposition on $H^*(S,\BQ)$. Let $\alpha\in G_kH^d(S,\BQ)$ for $n\ge0$, and $G_kH^d_c(S,\BQ)$ if $n<0$. 
\begin{enumerate}
    \item The operator $(\textnormal{ad}\,\partial)\Fq_1(\alpha)=[\partial,\Fq_1(\alpha)]\in \textup{End}_\BQ(\BH)$ is of degree $(1,d+2,k+1)$.
    \item Suppose further that $K_S\in G_1H^2(S,\BQ)$ where $K$ is the canonical class of $S$. Then the operator $(\textnormal{ad}\,\partial)\Fq_n(\alpha)=[\partial,\Fq_n(\alpha)]\in \textup{End}_\BQ(\BH)$ is of degree $(n,d+2n,k+n)$.
\end{enumerate}
\end{prop}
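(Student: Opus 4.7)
The plan is to reduce $[\partial,\Fq_n(\alpha)]$ to operators whose tridegrees are already established. The central input will be Theorem \ref{2.4}, which provides the explicit decomposition
\[
[\partial,\Fq_n(\alpha)] = nL_n(\alpha) + \binom{n}{2}\Fq_n(K_S\alpha).
\]
Once this identity is in hand, the conclusion will follow by applying Propositions \ref{4.2} and \ref{4.3} to each summand separately.

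For part (1), I would set $n=1$ and observe that $\binom{1}{2}=0$, so $[\partial,\Fq_1(\alpha)] = L_1(\alpha)$. Since $G_\bullet H^*(S,\BQ)$ is strongly multiplicative by hypothesis, Proposition \ref{4.3}(2) directly gives that $L_1(\alpha)$ is of tridegree $(1,d+2,k+1)$, which is exactly the claim.

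For part (2), I would analyze the two summands on the right-hand side and verify that they land in the same graded piece. The first summand $nL_n(\alpha)$ has tridegree $(n,d+2n,k+n)$ by Proposition \ref{4.3}(2). For the second, the assumption $K_S \in G_1H^2(S,\BQ)$ combined with the strong multiplicativity of the $G$-decomposition yields $K_S\alpha \in G_{k+1}H^{d+2}(S,\BQ)$, so Proposition \ref{4.2} applied to $\Fq_n(K_S\alpha)$ produces the tridegree
\[
(n,\,(d+2)+2n-2,\,(k+1)+n-1)=(n,d+2n,k+n),
\]
which agrees with that of $nL_n(\alpha)$. Hence the $\BQ$-linear combination also lies in the single graded summand of tridegree $(n,d+2n,k+n)$.

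The main obstacle is little more than bookkeeping: one must verify that the two degree shifts coincide, and this is precisely the point where both hypotheses enter essentially — strong multiplicativity is what makes Proposition \ref{4.3}(2) applicable, while the placement $K_S\in G_1H^2$ is what puts $\Fq_n(K_S\alpha)$ in the desired summand via Proposition \ref{4.2}. Without the $K_S$ hypothesis the two summands would sit in different graded pieces, so no clean tridegree could be assigned to the bracket for $n\ge 2$. The cases $n<0$ require no separate treatment, since Propositions \ref{4.2} and \ref{4.3} are already formulated uniformly in the sign of $n$ with $\alpha$ and $K_S\alpha$ interpreted in compactly supported cohomology as appropriate.
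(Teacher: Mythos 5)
Your proposal is correct and follows essentially the same route as the paper: apply Theorem \ref{2.4} to write $[\partial,\Fq_n(\alpha)]=nL_n(\alpha)+\binom{n}{2}\Fq_n(K_S\alpha)$, handle $n=1$ via Proposition \ref{4.3}, and for general $n$ use strong multiplicativity with $K_S\in G_1H^2(S,\BQ)$ so that Propositions \ref{4.2} and \ref{4.3} place both summands in the same tridegree $(n,d+2n,k+n)$. The degree bookkeeping matches the paper's argument exactly.
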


\begin{proof}
Recall that Proposition \ref{2.4} describes the commutator of boundary and Nakajima operators in terms of Virasoro and Nakajima operators. 
\[
[\partial,\Fq_n(\alpha)]=nL_n(\alpha)+\binom{n}{2}\Fq_n(K\alpha).
\]
\begin{enumerate}
    \item When $n=1$. Proposition \ref{4.3} implies that $[\partial,\Fq_1(\alpha)]=L_1(\alpha)$ is of degree $(1,d+2,k+1)$.
    \item General $n$ with $K\in G_1H^2(S,\BQ)$. Then the strong multiplicativity of $G_\bullet H^*(S,\BQ)$ and Proposition \ref{4.2} implies that $\Fq_n(K\alpha)$ is a linear operator of degree $(n,d+2n,k+n)$. Proposition \ref{4.3} implies that $L_n(\alpha)$ is a linear operator of degree $(n,d+2n,k+n)$. So $(\textrm{ad}\,\partial)\Fq_n(\alpha)=[\partial,\Fq_n(\alpha)]$ is of degree $(n,d+2n,k+n)$.
    \end{enumerate}
\end{proof}

\begin{prop}\label{4.6}
Let $S$ be a smooth quasi-projective surface equipped with a strongly multiplicative $G$-decomposition on $H^*(S,\BQ)$. Suppose further that the canonical class $K\in G_1H^2(S,\BQ)$. Let $\alpha\in G_kH^d(S,\BQ)$ if $n\ge0$ and $G_kH^d_c(S,\BQ)$ if $n<0$. Then the linear operator $(\textnormal{ad}\,\partial)L_n(\alpha):=[\partial,L_n(\alpha)]$ is of degree 
\[
(n,d+2n+2,k+n+1).
\]
\end{prop}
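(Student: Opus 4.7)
The plan is to reduce $[\partial, L_n(\alpha)]$ to a sum of terms of the form $[\partial, \Fq_m(\beta)]\Fq_{n-m}(\gamma)$ and $\Fq_m(\beta)[\partial,\Fq_{n-m}(\gamma)]$, and then read off the tridegree of every such term using Propositions \ref{4.2} and \ref{4.5}(2). The structural point that makes this clean is that $\partial$ has even cohomological degree ($\deg\partial=2$), so the super-commutator $[\partial,-]$ is the ordinary commutator and satisfies the ordinary Leibniz rule
\[
[\partial, AB]=[\partial,A]B+A[\partial,B].
\]

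First I would fix dual bases $\{\beta_i\}\subset H^*(S,\BQ)$ and $\{\beta^i\}\subset H^*_c(S,\BQ)$ adapted to the $G$-decomposition, as in Remark \ref{G}; these satisfy $\deg\beta_i+\deg\beta^i=4$ and $\Fg(\beta_i)+\Fg(\beta^i)=2$. For $n\ge 0$, the definition of $L_n(\alpha)$ (via $\Delta_+$) gives
\[
L_n(\alpha)=\frac{1}{2}\sum_{m\in\BZ}\sum_i :\Fq_m(\beta_i)\,\Fq_{n-m}(\beta^i\alpha):,
\]
and the case $n<0$ is identical with $\Delta_-$ in place of $\Delta_+$. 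Applying the Leibniz rule summand by summand (and noting that the normal ordering only permutes the two factors, which is harmless for the tridegree count) reduces the problem to evaluating the tridegree of $[\partial,\Fq_m(\beta_i)]\Fq_{n-m}(\beta^i\alpha)$ and of $\Fq_m(\beta_i)[\partial,\Fq_{n-m}(\beta^i\alpha)]$.

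The hypothesis $K_S\in G_1H^2(S,\BQ)$ together with Proposition \ref{4.5}(2) shows that $[\partial,\Fq_m(\beta_i)]$ has tridegree $(m,\,\deg\beta_i+2m,\,\Fg(\beta_i)+m)$, while Proposition \ref{4.2}, combined with the strong multiplicativity of $G_\bullet H^*(S,\BQ)$ applied to the product $\beta^i\alpha\in G_{\Fg(\beta^i)+k}H^{\deg\beta^i+d}_c(S,\BQ)$, shows that $\Fq_{n-m}(\beta^i\alpha)$ has tridegree $(n-m,\,\deg\beta^i+d+2(n-m)-2,\,\Fg(\beta^i)+k+(n-m)-1)$. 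Adding these and applying the duality identities $\deg\beta_i+\deg\beta^i=4$ and $\Fg(\beta_i)+\Fg(\beta^i)=2$ produces the uniform tridegree $(n,\,d+2n+2,\,k+n+1)$. The symmetric summand is treated by the same bookkeeping.

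Since every summand appearing in $[\partial,L_n(\alpha)]$ is then pure of the same tridegree, so is the whole operator, which is exactly the claim. The only real obstacle is the bookkeeping around normal ordering and handling the $n<0$ case uniformly; both are formal once one observes that $\partial$ is an even class and that the duality relations $\deg\beta_i+\deg\beta^i=4$, $\Fg(\beta_i)+\Fg(\beta^i)=2$ absorb the contributions of the auxiliary basis precisely.
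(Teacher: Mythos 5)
Your proposal is correct and follows essentially the same route as the paper: expand $L_n(\alpha)$ in dual adapted bases, apply the Leibniz rule for $[\partial,-]$ termwise, compute the tridegree of $[\partial,\Fq_m(\beta_i)]\Fq_{n-m}(\beta^i\alpha)$ (and its mirror) via Proposition \ref{4.5}(2) and Proposition \ref{4.2} together with strong multiplicativity, and cancel via $\deg\beta_i+\deg\beta^i=4$, $\Fg(\beta_i)+\Fg(\beta^i)=2$. Your explicit remarks on normal ordering and the evenness of $\partial$ only make explicit what the paper leaves implicit.
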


\begin{proof}
We use the notation in Proposition \ref{4.3}. We have $d_i+d^i=4$ and $k_i+k^i=2$. By Lemma \ref{comm}, we have
\begin{equation*}
\begin{split}
[\partial,L_n(\alpha)]
=&\frac{1}{2}[\partial,\sum_{m\in\BZ}\sum_i\Fq_m(\beta_i)\Fq_{n-m}(\beta^i\alpha)]\\
=&\frac{1}{2}\sum_{m\in\BZ}\sum_i[\partial,\Fq_m(\beta_i)\Fq_{n-m}(\beta^i\alpha)]\\
=&\frac{1}{2}\sum_{m\in\BZ}\sum_i[\partial,\Fq_m(\beta_i)]\Fq_{n-m}(\beta^i\alpha)\\
&+\frac{1}{2}\sum_{m\in\BZ}\sum_i\pm\Fq_m(\beta_i)[\partial,\Fq_{n-m}(\beta^i\alpha)].
\end{split}
\end{equation*}
So the degree of linear operator $[\partial,\Fq_m(\beta_i)]\Fq_{n-m}(\beta^i\alpha)$ is
\[
\begin{split}
&(m,d_i+2m,k_i+m)\\
&\textrm{by Proposition }\ref{4.5},\\
+&\left(n-m,d^i+d+2(n-m)-2,k^i+k+(n-m)-1\right)\\
&\textrm{by Proposition }\ref{4.3},\\
=&(n,d+2n+2,k+n+1)\\
&\textrm{because }d_i+d^i=4,\,k_i+k^i=2.
\end{split}
\]
The degree of $\Fq_m(\beta_i)[\partial,\Fq_{n-m}(\beta^i\alpha)]$ is calculated similarly by Proposition \ref{4.3} and Proposition \ref{4.5}. Since all the summands have the same degree, the linear operator $[\partial,L_n(\alpha)]$ is of degree $(n,d+2n+2,k+n+1)$.
\end{proof}

\begin{prop} \label{4.7}
Let $S$ be a smooth quasi-projective surface equipped with a strongly multiplicative $G$-decomposition on $H^*(S,\BQ)$. Suppose further that the canonical class $K\in G_1H^2(S,\BQ)$. Then the boundary operator $\partial$ is of degree $(0,2,1)$.
\end{prop}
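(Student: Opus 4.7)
The plan is to compute $\partial$ acting on a linear basis of $\BH$ consisting of vectors of the form
\[
v = \Fq_{m_1}(\beta_1) \cdots \Fq_{m_s}(\beta_s)\mathbf{1},
\]
where the $\beta_i$ are chosen from a basis of $H^*(S,\BQ)$ adapted to the $G$-decomposition (so each $\beta_i$ is $G$-pure of some bidegree $(d_i, k_i)$), and all $m_j>0$ with $\sum m_j = n$. By Proposition \ref{4.1} the vector $v$ is pure of tridegree $(n,d,k)$ with
\[
d = \sum_{j=1}^s (d_j + 2m_j - 2), \qquad k = \sum_{j=1}^s (k_j + m_j - 1),
\]
and these vectors, as the $\beta_i$ and $m_j$ vary, form a basis of $\BH$. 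Showing that $\partial v$ has tridegree $(n, d+2, k+1)$ for each such $v$ establishes the claim.

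First I would note that $\partial\mathbf{1}=0$: since $\mathbf{1} \in H^0(S^{[0]},\BQ)$ and $S^{[0]}$ is a point, its degree $2$ cohomology vanishes, so the homogeneous degree $2$ component of $1^{[0]}$ is zero. Next I would apply Lemma \ref{comm} to move $\partial$ past each Nakajima operator:
\[
\partial \Fq_{m_1}(\beta_1) \cdots \Fq_{m_s}(\beta_s)\mathbf{1} = \sum_{i=1}^s \pm\, \Fq_{m_1}(\beta_1) \cdots [\partial, \Fq_{m_i}(\beta_i)] \cdots \Fq_{m_s}(\beta_s)\mathbf{1},
\]
where the term with $\partial$ sitting at the far right vanishes by the previous remark.

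Now I invoke Proposition \ref{4.5}(2), which applies precisely under the hypotheses at hand (strong multiplicativity of $G_\bullet H^*(S,\BQ)$ and $K_S\in G_1H^2(S,\BQ)$): the operator $[\partial, \Fq_{m_i}(\beta_i)]$ has tridegree $(m_i, d_i + 2m_i, k_i + m_i)$. Compared to $\Fq_{m_i}(\beta_i)$ itself, which by Proposition \ref{4.2} has tridegree $(m_i, d_i + 2m_i - 2, k_i + m_i - 1)$, the commutator contributes an extra $(0, 2, 1)$. Thus each summand in the expansion above is pure of tridegree $(n, d+2, k+1)$, and so is their sum. This gives $\partial v \in \BH_{n, d+2, k+1}$, which is what we want.

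The argument is largely bookkeeping built on top of Proposition \ref{4.5}; the only mild subtlety is checking that $\partial\mathbf{1}=0$ so that the ``leftover'' term in the commutator expansion drops out. All remaining $G$-degree information is already encoded in the inductive commutator calculations carried out in the preceding propositions, so no new structural input is required.
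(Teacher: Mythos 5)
Your proposal is correct and follows essentially the same route as the paper: expand $\partial$ against the adapted Nakajima basis via Lemma \ref{comm}, use $\partial\mathbf{1}=0$ to kill the leftover term, and apply Proposition \ref{4.5} (together with Proposition \ref{4.2} for the untouched factors) to see each summand gains tridegree $(0,2,1)$. No gaps.
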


\begin{proof}
Since the operator $\partial$ is defined as the cup product with a degree $2$ class, it is obvious to see that its conformal weight is 0 and cohomological degree is 2. To calculate the $G$-degree of $\partial$, it suffices to calculate the action of $\partial$ on a linear basis 
\[
\Fq_{m_1}(\beta_1)\cdots\Fq_{m_s}(\beta_s)\mathbf{1},
\] 
where $m_i$ are positive integers and $\{\beta_i\}$ run over a linear basis adapted to the decomposition $G_\bullet H^*(S,\BQ)$. By Proposition \ref{4.1}, 
\[
\Fg\left(\Fq_{m_1}(\beta_1)\cdots\Fq_{m_s}(\beta_s)\mathbf{1}\right)=\sum_{i=1}^s(\Fg(\beta_i)+m_i-1)
\]
By Lemma \ref{comm} and noting that $\partial\,\mathbf{1}=0$, we have
\[
\begin{split}
&\partial\,\Fq_{m_1}(\beta_1)\cdots\Fq_{m_s}(\beta_s)\mathbf{1}\\
=&\sum_{i=1}^s \pm\Fq_{m_1}(\beta_1)\cdots\Fq_{m_{i-1}}(\beta_{i-1})[\partial,\Fq_{m_i}(\beta_i)]\Fq_{m_{i+1}}(\beta_{i+1})\cdots\Fq_{m_s}(\beta_s)\mathbf{1}.\\
\end{split}
\]
By Proposition \ref{4.3} and Proposition \ref{4.5}, 
\[
\begin{split}
&\Fg(\Fq_{m_1}(\beta_1)\cdots\Fq_{m_{i-1}}(\beta_{i-1})[\partial,\Fq_{m_i}(\beta_i)]\Fq_{m_{i+1}}(\beta_{i+1})\cdots\Fq_{m_s}(\beta_s)\mathbf{1})\\
=&\sum_{j=1}^{i-1}(\Fg(\beta_j)+m_j-1)+(\Fg(\beta_i)+m_i)+\sum_{j=i+1}^s(\Fg(\beta_j)+m_j-1)\\
=&\sum_{j=1}^s(\Fg(\beta_j)+m_j-1)+1
\end{split}
\]
holds for any $1\le i\le s$. Therefore 
\[
\Fg(\partial\,\Fq_{m_1}(\beta_1)\cdots\Fq_{m_s}(\beta_s)\mathbf{1})=\Fg(\Fq_{m_1}(\beta_1)\cdots\Fq_{m_s}(\beta_s)\mathbf{1})+1.
\]
We conclude that the boundary operator $\partial$ is of degree $(0,2,1)$.
\end{proof}

\begin{cor} \label{4.8}
Let $S$ be a smooth quasi-projective surface equipped with a strongly multiplicative $G$-decomposition on $H^*(S,\BQ)$. Suppose further that the canonical class $K\in G_1H^2(S,\BQ)$. Let $\alpha\in G_kH^d(S,\BQ)$ if $n\ge0$ and $G_kH^d_c(S,\BQ)$ if $n<0$. Then the linear operator $(\textup{ad}\,\partial)^m\Fq_n(\alpha)$ is of degree 
\[
(n,d+2n+2m-2,k+n+m-1).
\]
\end{cor}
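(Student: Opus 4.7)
The plan is to prove this by induction on $m$, using the propositions already established about the tridegrees of the single operators $\partial$ and $\Fq_n(\alpha)$.

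For the base case $m=0$, we have $(\textup{ad}\,\partial)^0\Fq_n(\alpha) = \Fq_n(\alpha)$, and Proposition \ref{4.2} gives the degree $(n, d+2n-2, k+n-1)$, which matches the formula with $m=0$.

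For the inductive step, the key observation is that the tridegree is additive under composition and hence under commutators: if two linear operators $A, B \in \textup{End}_\BQ(\BH)$ have fixed tridegrees $(\Fn_A, \Fd_A, \Fk_A)$ and $(\Fn_B, \Fd_B, \Fk_B)$, then both $AB$ and (up to sign) $BA$ have tridegree $(\Fn_A + \Fn_B, \Fd_A + \Fd_B, \Fk_A + \Fk_B)$, so their supercommutator $[A,B]$ does as well. Under the hypotheses of the corollary (strong multiplicativity of $G_\bullet H^*(S,\BQ)$ and $K \in G_1H^2(S,\BQ)$), Proposition \ref{4.7} gives that $\partial$ is of degree $(0,2,1)$. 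Assuming inductively that $(\textup{ad}\,\partial)^{m-1}\Fq_n(\alpha)$ has degree $(n, d+2n+2(m-1)-2, k+n+(m-1)-1)$, we conclude
\[
(\textup{ad}\,\partial)^m\Fq_n(\alpha) = [\partial, (\textup{ad}\,\partial)^{m-1}\Fq_n(\alpha)]
\]
has tridegree obtained by adding $(0,2,1)$, i.e. $(n, d+2n+2m-2, k+n+m-1)$, as claimed.

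There is no real obstacle: the proof is a routine application of the degree bookkeeping already set up. The only thing to notice is that the hypotheses on $K$ and on strong multiplicativity are needed precisely to invoke Proposition \ref{4.7}; without them $\partial$ would not have a pure tridegree and the induction would fail at the very first step.
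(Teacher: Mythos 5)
Your proof is correct and follows essentially the same route as the paper: induction on $m$, using Proposition \ref{4.7} for the tridegree $(0,2,1)$ of $\partial$ and the additivity of tridegrees under composition and commutators. The only cosmetic difference is that you start the induction at $m=0$ via Proposition \ref{4.2}, whereas the paper starts at $m=1$ via Proposition \ref{4.5}.(2); both are fine.
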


\begin{proof}
We argue by induction on $m$. The induction base $m=1$ is Proposition \ref{4.5}.(2). Suppose it is proved for $m-1$, i.e. $(\textrm{ad}\,\partial)^{m-1}\Fq_n(\alpha)$ has degree $(n,d+2n+2m-4,k+n+m-2)$. Then by Proposition \ref{4.7}, and induction hypothesis,  
\[
\begin{split}
&(\textrm{ad}\,\partial)^m\Fq_n(\alpha)=[\partial,(\textrm{ad}\,\partial)^{m-1}\Fq_n(\alpha)]\\
=&\partial (\textrm{ad}\,\partial)^{m-1}\Fq_n(\alpha)-(\textrm{ad}\,\partial)^{m-1}\Fq_n(\alpha)\partial.
\end{split}
\]
is a linear operator of degree $(n,d+2n+2m-2,k+n+m-1)$.
\end{proof}

\subsection{Tautological classes}
In this section we will show that for any pure class (see Definition \ref{dfn}) $\alpha\in H^*(S,\BQ)$, the tautological classes $\alpha^{[n]}$ is also pure, and calculate their $G$-degrees. Recall that 
\[
\alpha^{[n]}=\sum_{l\ge 0}\alpha^{[n]}_l,
\]
where $\alpha^{[n]}_l$ is the degree $\deg\alpha+2l-4$ component of $\alpha^{[n]}$ defined in (\ref{taut}). 

\begin{prop}\label{5.1}
Let $S$ is a smooth quasi-projective surface equipped with a strongly multiplicative $G$-decomposition on $H^*(S,\BQ)$. Suppose further that the canonical class $K_S\in G_1H^2(S,\BQ)$. Let $\alpha\in G_kH^d(S,\BQ)$, and $x\in G_KH^D(S^{[n]},\BQ)$. Then 
\[
\alpha^{[n]}_l\cdot x\in G_{K+k+l-2}H^{D+d+2l-4}(S^{[n]},\BQ).
\]
\end{prop}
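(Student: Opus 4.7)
The plan is to interpret the statement as the claim that the operator $\alpha^{[\bullet]}_l \in \textup{End}_\BQ(\BH)$ of cup product with $\alpha^{[n]}_l$ on each summand $H^*(S^{[n]},\BQ)$ has tridegree $(0, d+2l-4, k+l-2)$. The cohomological part follows immediately from $\deg \alpha^{[n]}_l = d+2l-4$, as noted after (\ref{taut}), so all the work lies in pinning down the $G$-degree.

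The two key inputs are that $\alpha^{[\bullet]}_l$ commutes with $\partial$ (both act by cup product on each $H^*(S^{[n]},\BQ)$), and Lehn's formula (Theorem \ref{pr})
\[
[\alpha^{[\bullet]},\Fq_1(y)] = \sum_{m\ge 0}\frac{1}{m!}(\textup{ad}\,\partial)^m\Fq_1(\alpha\cdot y).
\]
Matching cohomological degrees on the two sides forces $m = l-2$, yielding
\[
[\alpha^{[\bullet]}_l,\Fq_1(y)] = \frac{1}{(l-2)!}(\textup{ad}\,\partial)^{l-2}\Fq_1(\alpha\cdot y)
\]
for $l\ge 2$, and zero for $l<2$. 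By strong multiplicativity of $G_\bullet H^*(S,\BQ)$, one has $\alpha\cdot y \in G_{k+\Fg(y)}H^{d+\deg y}(S,\BQ)$, and Corollary \ref{4.8} then identifies this commutator as a pure linear operator of $G$-degree shift $k+\Fg(y)+l-2$.

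With these in hand, I would argue by induction on the word length of a presentation of $x$ as a polynomial in $\partial$ and the operators $\Fq_1(\beta_i)$ applied to $\mathbf{1}$. Such a presentation exists because the Nakajima basis combined with the identity $\Fq_{m+1}(\beta) = \tfrac{1}{m!}(\textup{ad}[\partial,\Fq_1(1)])^m\Fq_1(\beta)$, recorded in the excerpt just before Lemma \ref{comm}, writes every monomial in this form. The base case is $x = \mathbf{1}$, for which $\alpha^{[\bullet]}_l\mathbf{1} = \alpha^{[0]}_l = 0$ since the universal subscheme above $S^{[0]}$ is empty and the formula (\ref{taut}) vanishes. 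For the inductive step, write $x = \partial z$ or $x = \Fq_1(y_j)z$ with $z$ of strictly shorter length. The first case reduces to $\partial(\alpha^{[\bullet]}_l z)$ by commutativity, with purity concluded from the inductive hypothesis and Proposition \ref{4.7}. The second case uses Lemma \ref{comm} to split
\[
\alpha^{[\bullet]}_l\cdot\Fq_1(y_j)z = \pm\Fq_1(y_j)(\alpha^{[\bullet]}_l z) + [\alpha^{[\bullet]}_l,\Fq_1(y_j)]z,
\]
and both summands are pure of $G$-degree $\Fg(x)+k+l-2$: the first by induction and Proposition \ref{4.2}, the second by the commutator identity displayed above.

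The main technical step is the extraction of the single cohomological-degree component of Lehn's formula; once that is settled, the inductive bookkeeping goes through because the supercommutator signs in Lemma \ref{comm} do not affect $G$-degrees. The strong multiplicativity of $G_\bullet H^*(S,\BQ)$ enters the argument at exactly one point, to conclude $\Fg(\alpha\cdot y) = k+\Fg(y)$ inside the commutator, while the hypothesis $K_S \in G_1H^2(S,\BQ)$ is used only indirectly, through its role in Proposition \ref{4.7} and Corollary \ref{4.8}.
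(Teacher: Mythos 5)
Your proposal is correct and follows essentially the same route as the paper: extract the cohomological-degree component $m=l-2$ of Lehn's formula, reduce via $\BH=\partial\,\BH+\sum_\beta\Fq_1(\beta)\BH$, and induct using Propositions \ref{4.2}, \ref{4.7} and Corollary \ref{4.8}, with strong multiplicativity entering only through $\Fg(\alpha\cdot y)$ and $K_S\in G_1H^2(S,\BQ)$ entering only through those degree computations. The only differences (operator-level extraction of the commutator rather than after application to a class, and induction on word length in $\partial$ and $\Fq_1$'s rather than on the lexicographic pair of conformal weight and cohomological degree) are cosmetic.
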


\begin{proof}
We prove by induction on the lexicographic order of the pair $(n,D)$, the conformal weight and the cohomological degree of $x$. Since 
\[
\BH=\sum_\beta\Fq_1(\beta)\BH+\partial\,\BH,
\]
it suffices to calculate $\alpha^{[n]}_l\cdot \Fq_1(\beta)y$ and $\alpha^{[n]}_l\cdot \partial y$.
\begin{enumerate}
    \item $x=\Fq_1(\beta)y$, $\beta\in G_{k'}H^{d'}(S,\BQ)$. Then $y\in G_{K-k'}H^{D-d'}(S^{[n-1]},\BQ)$.     We have
    \[
    \begin{split}
    \alpha^{[n]}\cdot \Fq_1(\beta)y=&\alpha^{[\bullet]}\Fq_1(\beta)y\\
    =&[\alpha^{[\bullet]},\Fq_1(\beta)]y+\Fq_1(\beta)\alpha^{[\bullet]}y\\
    =&\left(\textrm{exp}(\textrm{ad}\,\partial)\Fq_1(\alpha\beta)\right)y+\Fq_1(\beta)(\alpha^{[n-1]}\cdot y)\\
    &\textrm{by Theorem }\ref{pr},\\
    =&\sum_{m\ge0}\frac{1}{m!}\left((\textrm{ad}\,\partial)^m\Fq_1(\alpha\beta)\right)y+\Fq_1(\beta)(\alpha^{[n-1]}\cdot y).\\
    \end{split}
    \]
    The cohomological degree $D+d+2l-4$ components yield an equation
    \[
    \alpha_l^{[n]}\cdot \Fq_1(\beta)y=\frac{1}{(l-2)!}\left((\textrm{ad}\,\partial)^{l-2}\Fq_1(\alpha\beta)\right)y+\Fq_1(\beta)(\alpha^{[n-1]}_l\cdot y)
    \]
    Both $(\textrm{ad}\,\partial)^{l-2}\Fq_1(\alpha\beta)y$ (by Corollary \ref{4.8}) and $\Fq_1(\beta)(\alpha^{[n-1]}\cdot y)$ (by induction hypothesis and Proposition \ref{4.2}) are of degree 
    \[(n,D+d+2l-4,K+k+l-2).\]
     Therefore
    \[
    \alpha^{[n]}_l\cdot \Fq_1(\beta)y\in G_{K+k+l-2}H^{D+d+2l-4}(S^{[n]},\BQ).
    \]
    \item $x=\partial y$. Then $y\in G_{K-1}H^{D-2}(S^{[n]},\BQ)$. The operator $\partial$ commutes with $\alpha^{[\bullet]}$ because both of them are defined by cup products with classes of even degree. So we have 
    \[
    \alpha^{[n]}\cdot \partial y=\alpha^{[\bullet]}\partial y
    =\partial\alpha^{[\bullet]}y.\]
    The cohomological degree $D+d+2l-4$ components yield an equation
    \[
    \alpha_l^{[n]}\cdot \partial y=\partial (\alpha_l^{[n]}\cdot y)
    \]
    By induction hypothesis $\alpha_k^{[n]}\cdot y$ is of degree
    \[
    (n,D+d+2l-6,K+k+l-3).
    \]
    Since $\partial$ is an operator of degree $(0,2,1)$ (Corollary \ref{4.8}), we have 
    \[
    \alpha_l^{[n]}\cdot \partial y\in G_{K+k+l-2}H^{D+d+2l-4}(S^{[n]},\BQ).
    \]
    \end{enumerate} 
\end{proof}

\begin{prop} \label{5.2}
Let $S$ be a smooth quasi-projective surface equipped with a strongly multiplicative $G$-decomposition on $H^*(S,\BQ)$. Suppose further that the canonical class $K_S\in G_1H^2(S,\BQ)$. Let $\alpha\in G_kH^d(S,\BQ)$. Then the tautological classes 
\[
\alpha^{[n]}_l\in G_{k+l-2}H^{d+2l-4}(S^{[n]},\BQ).
\]
\end{prop}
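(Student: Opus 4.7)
The plan is to deduce Proposition \ref{5.2} as an immediate specialization of Proposition \ref{5.1}. The key observation is that Proposition \ref{5.1} already establishes that cupping with $\alpha^{[n]}_l$ shifts the tridegree of any pure class in $H^*(S^{[n]},\BQ)$ by exactly $(0, d+2l-4, k+l-2)$. Thus, to pin down the $G$-degree of the tautological class $\alpha^{[n]}_l$ itself, it suffices to apply this shift to a pure class of known tridegree in $H^*(S^{[n]},\BQ)$, namely the unit $\mathbf{1}$.

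First I would recall that, by condition (i) in Definition \ref{dfn}, the induced $G$-decomposition on $H^*(S^{[n]},\BQ)$ (constructed in Proposition \ref{2.11}) satisfies $\mathbf{1} \in G_0 H^0(S^{[n]},\BQ)$. Then I would invoke Proposition \ref{5.1} with $x = \mathbf{1}$, so that $K = 0$ and $D = 0$. Since $\alpha^{[n]}_l \cdot \mathbf{1} = \alpha^{[n]}_l$, the conclusion of Proposition \ref{5.1} reads
\[
\alpha^{[n]}_l = \alpha^{[n]}_l \cdot \mathbf{1} \in G_{k+l-2} H^{d+2l-4}(S^{[n]},\BQ),
\]
which is precisely the assertion of Proposition \ref{5.2}.

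There is essentially no obstacle here, because the hard work, controlling the $G$-degree of the cup-product operator $\alpha^{[\bullet]}$, has already been done in Proposition \ref{5.1} via the induction on $(n,D)$ using Lehn's commutator formula (Theorem \ref{pr}), the $G$-purity of Nakajima operators (Proposition \ref{4.2}), and the $G$-purity of $\partial$ (Proposition \ref{4.7}). The role of the hypotheses (strong multiplicativity and $K_S \in G_1 H^2(S,\BQ)$) has already been absorbed into Proposition \ref{5.1}; here they appear only through that citation. In short, Proposition \ref{5.2} is the ``operator-to-class'' shadow of Proposition \ref{5.1}, obtained by evaluating the operator at the distinguished pure class $\mathbf{1}$.
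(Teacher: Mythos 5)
Your proposal is correct and coincides with the paper's own proof, which likewise specializes Proposition \ref{5.1} to $x=1\in G_0H^0(S^{[n]},\BQ)$. Nothing further is needed.
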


\begin{proof}
Let $x=1\in G_0H^0(S^{[n]},\BQ)$ in Proposition \ref{5.1}.
\end{proof}

\begin{rmk}
Note that $1\in G_0H^0(S^{[n]},\BQ)$ is of the form $(\Fq_1(1))^n\mathbf{1}$. By iterating Lemma \ref{comm}, we have an explicit formula 
\[
\alpha^{[n]}_l=\frac{1}{(l-2)!}\sum_{i=0}^{n-1}(\Fq_1(1))^i\left((\textrm{ad}\partial)^{l-2} (\Fq_1(\alpha)\right)(\Fq_1(1))^{n-1-i}\mathbf{1}.
\]
\end{rmk}

\subsection{Strong multiplicativity}

\begin{thm} \label{6.1}
Let $n\ge2$. Let $S$ be a smooth quasi-projective surface with a $G$-decomposition on $H^*(S,\BQ)$. If the induced $G$-decomposition $G_\bullet H^*(S^{[n]},\BQ)$ is strongly multiplicative, then the canonical class $K\in G_1H^2(S,\BQ)$. 
\end{thm}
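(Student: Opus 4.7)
The plan is to test strong multiplicativity on a single well-chosen cup product, namely $\partial\cdot\Fq_n(1)\mathbf{1}\in H^{2n}(S^{[n]},\BQ)$, and extract the $G$-position of $K$ from the commutator identity in Theorem \ref{2.4}. Decompose $K=K_0+K_1+K_2$ according to $H^2(S,\BQ)=\bigoplus_{j=0}^{2} G_jH^2(S,\BQ)$; the task is to show $K_0=K_2=0$.

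First, I record the $G$-degrees of the two factors in the cup product. By Proposition \ref{boundary}, $\partial S^{[n]}\in G_1H^2(S^{[n]},\BQ)$, and by Proposition \ref{4.1}, $\Fq_n(1)\mathbf{1}\in G_{n-1}H^{2n-2}(S^{[n]},\BQ)$. The strong multiplicativity hypothesis on $H^*(S^{[n]},\BQ)$ therefore forces
\[
\partial\cdot\Fq_n(1)\mathbf{1}\in G_nH^{2n}(S^{[n]},\BQ).
\]
On the other hand, since $\partial\mathbf{1}=0$, Theorem \ref{2.4} gives the identity
\[
\partial\cdot\Fq_n(1)\mathbf{1}=[\partial,\Fq_n(1)]\mathbf{1}=nL_n(1)\mathbf{1}+\binom{n}{2}\Fq_n(K)\mathbf{1}.
\]

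Next I analyze $G$-degrees on the right-hand side. Proposition \ref{4.3}(1) gives $L_n(1)\mathbf{1}\in G_nH^{2n}(S^{[n]},\BQ)$, consistent with the constraint above. For the remaining term, write $\Fq_n(K)\mathbf{1}=\sum_{j=0}^{2}\Fq_n(K_j)\mathbf{1}$ with each $\Fq_n(K_j)\mathbf{1}\in G_{j+n-1}H^{2n}(S^{[n]},\BQ)$ by Proposition \ref{4.1}. Because the left-hand side lies purely in $G_n$ and $nL_n(1)\mathbf{1}$ already accounts for the $G_n$-part on the right, the $G_{n-1}$- and $G_{n+1}$-components of $\binom{n}{2}\Fq_n(K)\mathbf{1}$ must vanish separately; that is, $\Fq_n(K_0)\mathbf{1}=0$ and $\Fq_n(K_2)\mathbf{1}=0$.

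Finally, under G\"ottsche's isomorphism (\ref{5}), the map $\beta\mapsto\Fq_n(\beta)\mathbf{1}$ identifies $H^2(S,\BQ)$ with the summand attached to the partition $\nu=(n)$ (where $l(\nu)=1$ and $S^{(\nu)}=S$) in $H^{2n}(S^{[n]},\BQ)$, so it is injective. Since $n\ge 2$ gives $\binom{n}{2}\neq 0$, we conclude $K_0=K_2=0$, i.e.\ $K=K_1\in G_1H^2(S,\BQ)$. There is no significant obstacle in this argument: it is a direct consequence of the $G$-degree bookkeeping from Propositions \ref{4.1}, \ref{4.3}, and \ref{boundary}, combined with the commutator identity of Theorem \ref{2.4}. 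The only subtle point is the choice of test product: strong multiplicativity is invoked just once, on $\partial\cdot\Fq_n(1)\mathbf{1}$, because that is the minimal test that isolates $\Fq_n(K)\mathbf{1}$ as a single $G$-homogeneous class modulo known contributions.
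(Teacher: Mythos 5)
Your argument is correct and is essentially the paper's own proof: both test strong multiplicativity on a single cup product against the boundary class $\partial S^{[n]}\in G_1H^2(S^{[n]},\BQ)$ and use the commutator formula $[\partial,\Fq_n(\alpha)]=nL_n(\alpha)+\binom{n}{2}\Fq_n(K\alpha)$ of Theorem \ref{2.4}, together with the degree bookkeeping of Propositions \ref{4.1}, \ref{4.2}, \ref{4.3}(1) and \ref{boundary}, to force the $\Fq(K)$-term into the $G$-degree compatible only with $K\in G_1H^2(S,\BQ)$. The only difference is the choice of test class: the paper computes the self-intersection $\partial S^{[n]}\cdot\partial S^{[n]}=-2\,\partial(\Fq_1(1))^{n-2}\Fq_2(1)\mathbf{1}$, which requires iterating Lemma \ref{comm} through a string of Nakajima operators, whereas you cup with $\Fq_n(1)\mathbf{1}$, giving a one-step commutator, a simpler injectivity argument via the G\"ottsche--Nakajima basis, and a slightly more careful treatment of a possibly non-pure $K$ by splitting it into its $G$-components.
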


\begin{proof}
If the $G$-decomposition is strongly multiplicative, then Proposition \ref{boundary} implies that the self-intersection of the boundary divisor $\partial S^{[n]}$ should be in $G_2H^4(S^{[n]},\BQ)$. We use two ways to express $\partial S^{[n]}$. On one hand, the linear operator $\partial$ is defined as taking the cup product with $-\frac{1}{2}\partial S^{[n]}$. On the other hand, $\partial S^{[n]}$ can be represented by Nakajima operators as $(\Fq_1(1))^{n-2}\Fq_2(1)\mathbf{1}.$ Therefore, the self intersection of boundary operator is
\[
\begin{split}
\partial S^{[n]}\cdot \partial S^{[n]}=&-2\partial(\Fq_1(1))^{n-2}\Fq_2(1)\mathbf{1}\\
=&-2\sum_{i=0}^{n-3}\pm(\Fq_1(1))^i[\partial, \Fq_1(1)](\Fq_1(1))^{n-3-i}\Fq_2(1)\mathbf{1}\\
&\pm2(\Fq_1(1))^{n-2}[\partial,\Fq_2(1)]\mathbf{1}\\
&\textrm{by Lemma }\ref{comm}\\
=&-2\sum_{i=0}^{n-3}\pm(\Fq_1(1))^iL_1(1)(\Fq_1(1))^{n-3-i}\Fq_2(1)\mathbf{1}\\
&\pm2(\Fq_1(1))^{n-2}(2L_2(1)+\Fq_2(K))\mathbf{1}.\\
&\textrm{by Proposition }\ref{2.4}.
\end{split}
\]
By Proposition \ref{4.2} and Proposition \ref{4.3}.(1), we have
\[
\Fg\left((\Fq_1(1))^iL_1(1)(\Fq_1(1))^{n-3-i}\Fq_2(1)\mathbf{1}\right)=2,
\]
\[
\Fg\left((\Fq_1(1))^{n-2}L_2(1)\mathbf{1}\right)=2,
\]
\[
\Fg\left((\Fq_1(1))^{n-2}\Fq_2(K)\mathbf{1}\right)=1+\Fg(K).
\]
The strong multiplicativity forces that $\Fg(K)=1$, or equivalently, $K\in G_1H^2(S,\BQ)$. 
\end{proof}

\begin{thm} \label{6.2}
Let $n\ge2$. Let $S$ be a smooth quasi-projective surface equipped with a strongly multiplicative $G$-decomposition on $H^*(S,\BQ)$. If $K_S\in G_1H^2(S,\BQ)$, then the $G$-decomposition on $H^*(S^{[n]},\BQ)$ is strongly multiplicative.
\end{thm}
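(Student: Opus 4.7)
The plan is to bootstrap from Proposition \ref{5.1}, which establishes that cup product with any individual tautological class $\alpha^{[n]}_l$ defines an endomorphism of $H^*(S^{[n]},\BQ)$ of precisely the expected $G$-degree, combined with the fact from \cite[Theorem 5.31]{LQW} that tautological classes generate $H^*(S^{[n]},\BQ)$ as a $\BQ$-algebra. Fix a basis $\{\beta_i\}$ of $H^*(S,\BQ)$ adapted to $G_\bullet H^*(S,\BQ)$; then the classes $\alpha^{[n]}_l$ with $\alpha$ ranging over this basis and $l\ge 0$ generate the cohomology ring of $S^{[n]}$. By Proposition \ref{5.2} these generators are pure, and iterating Proposition \ref{5.1} starting from $1\in G_0H^0(S^{[n]},\BQ)$ shows that every monomial $M$ in them is pure of $G$-degree $k_M$ equal to the sum of the $G$-degrees of its factors, and that cup product with $M$ is an operator on $\BH$ of $G$-degree $k_M$.

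To finish, I would take arbitrary pure classes $x\in G_KH^D(S^{[n]},\BQ)$ and $y\in G_{K'}H^{D'}(S^{[n]},\BQ)$ and write $y=\sum_j c_j M_j$ as a $\BQ$-linear combination of such monomials. Regrouping by $G$-degree gives $y=\sum_k y^{(k)}$, with $y^{(k)}:=\sum_{k_{M_j}=k} c_j M_j \in G_kH^*(S^{[n]},\BQ)$, so by uniqueness of the $G$-decomposition of $y$ together with the hypothesis that $y$ is pure of $G$-degree $K'$, one must have $y^{(k)}=0$ for $k\ne K'$ and $y=y^{(K')}$. Multiplying by $x$ and using that each $x\cdot M_j$ lies in $G_{K+k_{M_j}}$, we obtain $x\cdot y=\sum_k x\cdot y^{(k)}=x\cdot y^{(K')}\in G_{K+K'}H^{D+D'}(S^{[n]},\BQ)$, which is the desired strong multiplicativity.

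The main substance of the argument is really already packaged into Proposition \ref{5.1}, whose proof uses both the strong multiplicativity of $G_\bullet H^*(S,\BQ)$ and the hypothesis $K_S\in G_1H^2(S,\BQ)$ (the latter entering through Proposition \ref{4.7}, which ensures that the boundary operator $\partial$ has $G$-degree $1$). The only delicate point in the argument above is that a polynomial expression in tautological classes can a priori contain monomial summands of many different $G$-degrees; however, the cancellation of all $G$-components of $y$ other than $y^{(K')}$ is forced by uniqueness of the $G$-decomposition, and this cancellation transfers term-by-term to $x\cdot y$ because cup product with $x$ is a single fixed linear operator applied to each summand.
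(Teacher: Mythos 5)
Your proposal is correct and follows essentially the same route as the paper's own proof: iterate Proposition \ref{5.1} together with Proposition \ref{5.2} and the generation of $H^*(S^{[n]},\BQ)$ by tautological classes for a basis adapted to $G_\bullet H^*(S,\BQ)$, then handle the fact that a pure class may be expressed with monomials of mixed $G$-degrees by regrouping and using the direct-sum property. The paper compresses this last step into the assertion that a pure class can be written as a sum of monomials all lying in the same summand; your explicit cancellation argument via uniqueness of the decomposition is exactly what justifies that assertion.
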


\begin{proof}
We see from Proposition \ref{5.1} and Proposition \ref{5.2} that 
$\Fg(\alpha_l^{[n]}\cdot x)=K+k+l-2$ and $\Fg(\alpha_l^{[n]})=k+l-2$ for any $\alpha\in G_kH^*(S,\BQ)$ and $x\in G_KH^*(S^{[n]},\BQ)$. So 
\[
\Fg(\alpha_l^{[n]}\cdot x)=\Fg(\alpha_l^{[n]})+\Fg(x)
\]
holds for any pure class $\alpha$.
By an induction argument on $t$, we obtain 
\begin{equation} \label{eq}
\Fg\left(\prod_{j=1}^{t}(\alpha_j)_l^{[n]}\cdot x\right)=\Fg\left(\prod_{j=1}^{t}(\alpha_j)_l^{[n]}\right)+\Fg(x)
\end{equation}
for pure classes $\alpha_j$. By \cite{LQW}, $H^*(S^{[n]},\BQ)$ is an $\BQ$-algebra generated by tautological classes $\alpha^{[n]}_l$ where $\alpha$ runs over a linear basis $B$ of $H^*(S,\BQ)$. We may choose $B$ to be adapted to the $G$-decomposition $G_\bullet H^*(S,\BQ)$ since the assignment $\alpha\mapsto\alpha^{[n]}$ is $\BQ$-linear. Therefore, any pure class $z\in G_KH^D(S^{[n]},\BQ)$ can be written as 
\[
z=\sum_{i=1}^s\prod_{j=1}^{t_s}{(\alpha_{ij})}^{[n]}_{l_{ij}}.
\]
such that each summand is in $G_KH^D(S^{[n]},\BQ)$. We conclude from linearity of cup product and (\ref{eq}) that $\Fg(z\cdot x)=\Fg(z)+\Fg(x)$ holds for any pure classes $z$ and $x$. Therefore the $G$-decomposition is strongly multiplicative. 
\end{proof}

The following corollary is a generalization of \cite[Theorem 0.1]{SZ}.

\begin{cor}
Let $S$ be a smooth quasi-projective surface equipped with a strongly multiplicative $G$-decomposition on $H^*(S,\BQ)$. Suppose that the canonical class $K\in G_1H^2(S,\BQ)$. Let $Z_n$ be the universal subscheme in $S\times S^{[n]}$. Then we have
\[
\ch_l(\CO_{Z_n}) \in G_lH^{2l}(S\times S^{[n]},\BQ).
\]
\end{cor}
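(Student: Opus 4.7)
The plan is to argue by induction on $l$, using the tautological class formula \eqref{taut} to invert for $\ch_l$ modulo lower-order Chern character terms. For $l < 2$, the class $\ch_l(\CO_{Z_n})$ vanishes on $S \times S^{[n]}$ since $\iota_*\CO_{Z_n}$ is supported on the codimension-$2$ subvariety $Z_n$, so both $\ch_0$ and $\ch_1$ are zero. Hence the base cases are trivial and the induction starts at $l = 2$.

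For the inductive step at $l \geq 2$, assume $\ch_{l'}(\CO_{Z_n}) \in G_{l'}H^{2l'}(S \times S^{[n]}, \BQ)$ for all $l' < l$. By the duality characterization of $G$-decompositions (Definition \ref{dfn}, Remark \ref{G}), it suffices to show that the Poincar\'e pairing $\int_{S \times S^{[n]}} \ch_l \cdot \xi$ vanishes for every pure $\xi \in G'_b H^{4 + 4n - 2l}_c(S \times S^{[n]}, \BQ)$ with $b \ne 2 + 2n - l$. By K\"unneth compatibility of the product $G$-decomposition, I reduce to $\xi = \alpha^* \boxtimes \eta^*$ with $\alpha^* \in G'_{b_1}H^p_c(S)$, $\eta^* \in G'_{b_2}H^q_c(S^{[n]})$, $b_1 + b_2 = b$, $p + q = 4 + 4n - 2l$.

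Using the proper map $p\colon Z_n \to S^{[n]}$, the pairing unfolds as
\[
\int_{S \times S^{[n]}} \ch_l \cdot (\alpha^* \boxtimes \eta^*) = \int_{S^{[n]}} p_*\bigl(\ch_l \cdot q^*(\iota\alpha^*)\bigr) \cdot \eta^*.
\]
Formula \eqref{taut} gives $p_*(\ch_l \cdot q^*(\iota\alpha^*)) = (\iota\alpha^*)^{[n]}_l - \sum_{k=1}^{2} p_*(\ch_{l-k} \cdot q^*(\iota\alpha^* \cdot \textrm{td}_k(S)))$. Proposition \ref{5.2} places the first term in $G_{b_1 + l - 2}H^*(S^{[n]})$; its pairing with $\eta^* \in G'_{b_2}$ vanishes unless $b_1 + b_2 = 2 + 2n - l$. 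For each correction term, I would rewrite it via projection formula as $\int_{S \times S^{[n]}} \ch_{l-k} \cdot ((\alpha^* \cdot \textrm{td}_k(S)) \boxtimes \eta^*)$, then invoke $\textrm{td}_1(S) = -K_S/2 \in G_1 H^2(S)$ (by hypothesis), $\textrm{td}_2(S) = \frac{1}{12}(K_S^2 + c_2(S)) \in G_2 H^4(S)$ (using $K_S^2 \in G_2$ by strong multiplicativity and the tautological equality $H^4(S,\BQ) = G_2 H^4(S,\BQ)$), together with the compatibility $G_k H^* \cdot G'_{k'}H^*_c \subset G'_{k+k'}H^*_c$, a formal consequence of strong multiplicativity and $G$-duality, to deduce $\alpha^* \cdot \textrm{td}_k(S) \in G'_{b_1 + k}H^*_c(S)$. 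The inductive hypothesis then forces the correction to vanish unless $(l - k) + (b_1 + k + b_2) = 2 + 2n$, i.e.\ $b_1 + b_2 = 2 + 2n - l$ once more. Combining the two vanishings yields $\ch_l(\CO_{Z_n}) \in G_l H^{2l}(S \times S^{[n]}, \BQ)$.

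The main obstacle is the bookkeeping around Todd classes and compact support: confirming $\textrm{td}_2(S) \in G_2 H^4(S)$ without auxiliary hypotheses, and justifying that the projection formula is applicable to the non-proper projection $\pi_2\colon S \times S^{[n]} \to S^{[n]}$ when restricted to $\ch_{l-k}$, which is supported on $Z_n$ and hence pushes forward properly. Both are essentially formal, but they are where a direct K\"unneth-extraction argument threatens to get tangled, and routing the verification through the pairing characterization keeps the dimension count transparent.
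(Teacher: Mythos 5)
Your argument is correct in substance and lands on the same inductive skeleton as the paper --- induction on $l$ through formula (\ref{taut}) and the purity of tautological classes (Proposition \ref{5.2}) --- but you run the duality in the opposite direction. The paper expands the whole class at once: choosing dual bases $\{\beta_i\}$, $\{\beta^i\}$ adapted to $G$ and $G'$, the defining relation $(\beta_i)^{[n]}=p_*(\ch(\CO_{Z_n})\cdot q^*(\beta_i\cdot\textrm{td}(S)))$ gives $\ch(\CO_{Z_n})\cdot q^*\textrm{td}(S)=\sum_i\iota\beta^i\otimes(\beta_i)^{[n]}$; taking the degree-$2l$ piece, Proposition \ref{5.2} shows the right-hand side is pure of $G$-degree $l$, and then one solves for $\ch_l(\CO_{Z_n})$ by induction on $l$, absorbing the terms $\ch_{l-1}\cdot q^*\textrm{td}_1$ and $\ch_{l-2}\cdot q^*\textrm{td}_2$ using $\textrm{td}_1\in G_1$, $\textrm{td}_2\in G_2$ and the strong multiplicativity of the induced decomposition on $S\times S^{[n]}$ (Theorem \ref{6.2} plus the factor-wise cup product). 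You instead test $\ch_l$ against pure compactly supported K\"unneth classes and move the Todd factors onto the test class via the module property $G_aH^*\cdot G'_bH^*_c\subset G'_{a+b}H^*_c$, which indeed follows formally from strong multiplicativity on $H^*(S,\BQ)$ and the duality axiom; this buys you independence from Theorem \ref{6.2} (you only use strong multiplicativity on $S$ itself), at the price of extra bookkeeping: the base cases $\ch_0=\ch_1=0$, the verification $H^4(S,\BQ)=G_2H^4(S,\BQ)$ (which does follow from duality against $H^0_c$, and which the paper also uses without comment), and the projection-formula step over the non-proper factor, where one must use that $\ch(\CO_{Z_n})$ lifts to a class supported on $Z_n$, proper over $S^{[n]}$ --- the same convention the paper already relies on in (\ref{taut}), so this is a shared, essentially formal, point rather than a gap in your argument.
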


\begin{proof}
Let $\{\beta_i\}$ be a basis of $H^*(X,\BQ)$ with dual basis $\{\beta^i\}$ of $H^*_c(X,\BQ)$, both adapted to the decomposition $G_\bullet H^*(S,\BQ)$. So
\[
(\beta_i)^{[n]}=p_*(\ch(\CO_{Z_n})\cdot q^*(\beta_i\cdot \textrm{td}(S)))
\]
implies that
\[
\ch(\CO_{Z_n})\cdot q^*\textrm{td}(S)=\sum_i \iota\beta^i\otimes (\beta_i)^{[n]}.
\]
The cohomological degree $2l$ component is   
\[
\begin{split}
\ch_l(\CO_{Z_n})q^*\textrm{td}_0(S)+\ch_{l-1}(\CO_{Z_n})q^*\textrm{td}_1(S)&\\
+\ch_{l-2}(\CO_{Z_n})q^*\textrm{td}_2(S)&=\sum_i \iota\beta^i\otimes (\beta_i)^{[n]}_l. 
\end{split}
\]
By Proposition \ref{5.2}, the $G$-degree of the right side is $2l$. Note that $\textrm{td}_0(S)=1$, $\textrm{td}_1(S)=K_S\in G_1H^2(S,\BQ)$, and $\textrm{td}_2(S)\in H^4(S,\BQ)=G_2H^4(S,\BQ)$. By the K\"unneth property for $G$-decompositions, we have 
\[
q^*\textrm{td}_1(S)\in G_1H^2(S\times S^{[n]},\BQ)
\]
and
\[
q^*\textrm{td}_2(S)\in G_2H^4(S\times S^{[n]},\BQ)
\]
Since the cup product on $S\times S^{[n]}$ are calculated factor-wisely, Theorem \ref{6.1} implies that the $G$-decomposition $G_\bullet H^*(S\times S^{[n]},\BQ)$ is strongly multiplicative. The claim follows from an induction on $l$.
\end{proof}

\section{Applications to perverse decompositions}

\subsection{Perverse filtrations}
We first recall some basic facts about perverse filtrations. Let $D^b_c(Y)$ be the bounded derived category of constructible sheaves. Denote $^\Fp\tau_{\le k}$ the perverse truncation functor. For any object $\CC\in D^b_c(Y)$, we have a canonical morphism
\[
^\Fp\tau_{\le k}\CC\to \CC.
\]
For any proper morphism $f:X\to Y$ between smooth algebraic varieties, we have
\[
^\Fp\tau_{\le k}Rf_*\BQ_X\to Rf_*\BQ_X.
\]
By taking the hypercohomology, there is a natural map 
\begin{equation}\label{p}
\BH^{d-\dim X+r(f)}(X,{^\Fp\tau}_{\le k}Rf_*\BQ_X[\dim X-r(f)])\to H^d(X,\BQ),
\end{equation}
where 
\[
r(f)=\dim X\times_Y X-\dim X
\]
is the defect of semismallness. Define $P_kH^*(X,\BQ)\subset H^*(X,\BQ)$ to be the image of (\ref{p}). By definition, the filtration $P_\bullet H^*(X,\BQ)$ is an increasing filtration, and is called the \emph{perverse filtration} associated with the morphism $f:X\to Y$. The perversity of a class $\alpha\in H^*(X,\BQ)$, denoted as $\Fp^{f}(\alpha)$, is defined to be the number $k$ such that $\alpha\in P_kH^*(X,\BQ)$ and $\alpha\not\in P_{k-1}H^*(X,\BQ)$. The perverse filtration is concentrated in $[0,2r(f)]$, \emph{i.e.} 
\[
0\le \Fp^f(\alpha)\le 2r(f)
\]
for any class $\alpha\in H^*(X,\BQ)$. By the decomposition theorem \cite{BBD}, there is a (non-canonical) decomposition
\[
Rf_*\BQ_X[\dim X-r(f)]=\bigoplus_{i=0}^{2r(f)}\CP_i[-i],
\]
where $P_i$ are perverse sheaves on $Y$. Let $G_iH^*(X,\BQ):=\BH (\CP_i[-i])$, then we have a decomposition
\[
H^*(X,\BQ)=\bigoplus_i G_iH^*(X,\BQ).
\]
Any decomposition obtained this way is called a \emph{perverse decomposition} associated with the morphism $f:X\to Y$. It follows from the definition that it splits the perverse filtration, i.e.
\[
P_kH^*(X,\BQ)=\bigoplus_{i\le k}G_i H^*(X,\BQ).
\]
The following proposition is a generalization of \cite[Proposition 3.1]{Z}. 

\begin{prop} \label{pisg}
Let $f:X\to Y$ be a proper flat morphism between smooth quasi-projective varieties. Then any perverse decomposition associated with $f$ is a $G$-decomposition.
\end{prop}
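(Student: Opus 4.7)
The plan is to verify the three defining properties of a $G$-decomposition from Definition \ref{dfn}. Fix a splitting $Rf_*\BQ_X[\dim X-r(f)]=\bigoplus_{i=0}^{2r(f)}\CP_i[-i]$ coming from the decomposition theorem and set
\[
G_iH^d(X,\BQ)=\BH^{d-\dim X+r(f)}(Y,\CP_i[-i]),\qquad G'_iH^d_c(X,\BQ)=\BH^{d-\dim X+r(f)}_c(Y,\CP_i[-i]).
\]
The resulting $G$-decomposition has length $m=2r(f)$, which coincides with $\dim X$ in all applications considered in the paper. Compatibility with the forgetful map (condition (ii)) is automatic: the natural transformation $\BH^*_c(Y,-)\to\BH^*(Y,-)$ is additive on direct sums, so $\iota$ restricts to $G'_iH^*_c\to G_iH^*$ summand by summand.

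Condition (i), that $1\in G_0H^0(X,\BQ)$, follows from a standard $t$-structure estimate. Each $\CP_i$ is a perverse sheaf on the smooth variety $Y$ of dimension $\dim Y=\dim X-r(f)$, so it lies in cohomological degrees $[-\dim Y,0]$; in particular $\BH^k(Y,\CP_i)=0$ for $k<-\dim Y$. Writing
\[
H^0(X,\BQ)=\BH^{-\dim Y}(Y,Rf_*\BQ_X[\dim X-r(f)])=\bigoplus_{i=0}^{2r(f)}\BH^{-\dim Y-i}(Y,\CP_i),
\]
only the $i=0$ summand survives, forcing $1\in G_0H^0(X,\BQ)$.

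For the duality condition I would invoke Verdier duality on $Y$. Since $f$ is proper and $X$ is smooth of dimension $\dim X$, the Verdier dual satisfies $\BD(Rf_*\BQ_X)=Rf_!\BD\BQ_X=Rf_*\BQ_X[2\dim X]$, so after shifting by $\dim X-r(f)$,
\[
\BD\Bigl(\bigoplus_i\CP_i[-i]\Bigr)=\bigoplus_i\BD(\CP_i)[i]=\bigoplus_j\CP_j[2r(f)-j],
\]
and matching summands yields $\BD\CP_i=\CP_{2r(f)-i}$. The Poincar\'e--Verdier duality pairing $\BH^k_c(Y,\CC)\times\BH^{-k}(Y,\BD\CC)\to\BQ$ applied to $\CC=\CP_i[-i]$ then identifies $G'_iH^{d'}_c(X,\BQ)$ with the $\BQ$-dual of $G_{2r(f)-i}H^{2\dim X-d'}(X,\BQ)$, and this identification agrees with the Poincar\'e pairing on $X$ under the standard comparison $\BD\BQ_X=\BQ_X[2\dim X]$. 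Pairings between summands with noncomplementary indices vanish by the direct sum decomposition, completing the verification of Definition \ref{du}(5).

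The main obstacle is the careful bookkeeping of degree shifts leading to $\BD\CP_i=\CP_{2r(f)-i}$ with the correct normalization $[\dim X-r(f)]$ of the perverse $t$-structure; once that arithmetic is pinned down, all three conditions of Definition \ref{dfn} fall out essentially mechanically.
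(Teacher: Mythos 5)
Your argument is essentially the paper's own proof: you define $G'$ by taking compactly supported hypercohomology of the same splitting, get compatibility with the forgetful map from functoriality of $\BH^*_c(Y,-)\to\BH^*(Y,-)$, and obtain the duality from the self-duality of $Rf_*\BQ_X[\dim X]$ giving $\CP_i\cong\CP_{2r(f)-i}^\vee$, while your $t$-structure estimate for $1\in G_0H^0(X,\BQ)$ simply spells out what the paper compresses into ``follows from the flatness of $f$'' (flatness being what guarantees $\dim Y=\dim X-r(f)$, so only the $i=0$ summand can contribute in that degree). The one step you assert rather than verify---that the Poincar\'e pairing vanishes on \emph{all} summand pairs with non-complementary indices, not just those with $i+j<2r(f)$ where vanishing of negative Ext groups between perverse sheaves applies, and that the Verdier-induced pairing agrees with the Poincar\'e pairing for the chosen, possibly non-self-dual, splitting---is treated with exactly the same brevity in the paper's proof, so your write-up matches both the approach and the level of rigor of the original.
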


\begin{proof}
$1\in P_0H^0(X,\BQ)$ follows from the flatness of $f$. Let 
\[
Rf_*\BQ_X[\dim X-r(f)]=\bigoplus_{i=0}^{2r(f)} \CP_{i}[-i]
\]
be a decomposition in $D_c^b(Y)$, where $\CP_i$ are perverse sheaves. We define
$G'_iH^d_c(X,\BQ):=\BH_c(\CP[-i])$. Then 
\[
\iota:G'_iH^d_c(X,\BQ)\to G_iH^d(X,\BQ),
\]
where $\iota:H^*_c(X,\BQ)\to H^*(X,\BQ)$ is the forgetful functor. The self-duality of $Rf_*\BQ_X[\dim X]$ yields isomorphisms $\CP_i\cong\CP_{2r(f)-i)}^\vee$, where $\vee$ is the Verdier dual functor. Therefore, we have the duality equalities
\[
G_iH^d(X,\BQ)\cong G'_{2r(f)-i}H^{2\dim X-i}_c(X,\BQ)^\vee, ~~\forall i\ge0,
\]
which proves that the decompositions $G$ and $G'$ are dual to each other in the sense of Definition \ref{du}.(5).
\end{proof}

The perverse filtration $P_\bullet H^*(X,\BQ)$ associated with a morphism $f:X\to Y$ is called \emph{multiplicative} if 
\[
P_kH^d(X,\BQ)\times P_{k'}H^{d'}(X,\BQ)\xrightarrow{\cup}P_{k+k'}H^{d+d'}(X,\BQ),
\]
or equivalently, $\Fp^f(\alpha\beta)\le\Fp^f(\alpha)+\Fp^f(\beta)$.
For a surface fibered over a curve, the associated perverse filtration is always multiplicative. 
\begin{prop}{\cite[Proposition 4.17]{Z}} \label{pro}
Let $f:S\to C$ be a surjective map from a smooth quasi-projective surface to a smooth curve. Then the perverse filtration associated with $f$ is multiplicative.
\end{prop}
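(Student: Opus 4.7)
\emph{Proof proposal.} The argument is essentially structural and leverages the short length of the perverse filtration. Since $f$ is a proper surjection onto a curve, the generic fiber has dimension $1$ and the defect of semismallness is $r(f) = \dim(S\times_C S) - \dim S = 1$. By the BBD decomposition theorem the perverse filtration therefore has at most three non-trivial steps
\[
P_0 H^*(S,\BQ) \subset P_1 H^*(S,\BQ) \subset P_2 H^*(S,\BQ) = H^*(S,\BQ).
\]
Because $P_2 = H^*$, the containment $P_k \cup P_{k'} \subset P_{k+k'}$ is automatic as soon as $k+k'\ge 2$. The verification thus reduces to two inclusions: $(\mathrm{i})\ P_0\cdot P_0\subset P_0$ and $(\mathrm{ii})\ P_0\cdot P_1\subset P_1$.

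For these, I would first fix a splitting $Rf_*\BQ_S[1]\cong \mathcal{P}_0\oplus\mathcal{P}_1[-1]\oplus\mathcal{P}_2[-2]$ and invoke the fact that every perverse sheaf on a smooth curve is a direct sum of middle extensions $j_{!*}(L[1])$ of shifted local systems with skyscrapers at isolated points. Using this, I would identify $P_0 H^*(S,\BQ)$ concretely as $f^*H^*(C,\BQ)$ plus contributions supported cohomologically at fibers where $f$ is not a topological submersion (or whose constituents are not geometrically connected). The pullback part $f^*H^*(C,\BQ)$ multiplies any class in $H^*(S,\BQ)$ while preserving the perverse filtration, essentially by the projection formula — this covers inclusions $(\mathrm{i})$ and $(\mathrm{ii})$ for the pullback summand. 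The skyscraper part is handled by a localisation argument at each special fibre $f^{-1}(p)$: such classes vanish on $S\setminus f^{-1}(U)$ for a small analytic neighbourhood $U\ni p$, and an excision computation identifies their cup product with any class with a class in the correct perverse step.

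The main obstacle is precisely the borderline case $(\mathrm{ii})$ for the skyscraper part of $P_0$. A naive perversity estimate based on the tensor-product map $\mu\colon Rf_*\BQ_S[1]\otimes^L Rf_*\BQ_S[1]\to Rf_*\BQ_S[1][1]$ only gives $P_k\cdot P_{k'}\subset P_{k+k'+1}$, since the derived tensor of two perverse sheaves on $C$ can carry an $\mathcal{H}^0$-part at points where the two skyscraper loci meet. To remove this one extra unit of perversity I would use relative hard Lefschetz for $f$: an $f$-ample class gives isomorphisms $\mathcal{P}_0\xrightarrow{\sim}\mathcal{P}_2$ pairing the skyscraper summand of $\mathcal{P}_0$ with the corresponding summand of $\mathcal{P}_2$, and a direct check shows that the pairing of this skyscraper summand of $\mathcal{P}_0$ with $\mathcal{P}_1$ under $\mu$ is zero. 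This ``orthogonality'' is essentially a feature of the curve base, and it is what I expect to take the most care to verify.
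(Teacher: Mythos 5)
Your reduction to the two inclusions $P_0\cdot P_0\subset P_0$ and $P_0\cdot P_1\subset P_1$ is correct (the paper itself gives no argument here, citing [Z, Proposition 4.17]), but the way you then locate the difficulty contains a genuine error. By the decomposition quoted twice in this paper ((\ref{p2}) and (\ref{dec}), from [dCM1, Theorem 3.2.3]), for a proper surjection from a smooth surface onto a smooth curve every point-supported summand of $Rf_*\BQ_S[1]$ sits in the \emph{middle} perverse degree: $Rf_*\BQ_S[1]=\BQ_C[1]\oplus\bigl(j_*L[1]\oplus\oplus_p\BQ_p^{n_p-1}\bigr)[-1]\oplus\bigl(\BQ_C[1]\bigr)[-2]$ for connected fibers, and in general $\BQ_C[1]$ is replaced by the (shifted) pushforward of the constant sheaf from the Stein factorization. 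Consequently $P_0H^*(S,\BQ)$ is exactly $f^*H^*(C,\BQ)$ (resp.\ the pullback from the Stein factorization); it has no skyscraper part, and classes of components of special fibers have perversity $1$, not $0$. Your declared ``main obstacle'' --- case (ii) for a skyscraper part of $P_0$ --- therefore does not exist, and the machinery you propose for it (excision near special fibres, and the relative-hard-Lefschetz ``orthogonality'' of a skyscraper summand of $\CP_0$ against $\CP_1$, which you assert but do not verify) is aimed at a non-issue.

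Meanwhile the genuinely nontrivial point is the one you dispatch in half a sentence: that cupping with $f^*\alpha$, $\alpha\in H^{\ge1}(C,\BQ)$, does not raise perversity. The formal truncation argument only gives $f^*H^a(C,\BQ)\cup P_k\subset P_{k+a}$, so the off-by-one loss you worry about occurs already for the pullback summand (a nonzero map $\BQ_C\to\BQ_C[1]$ lowers perverse degree), not only where skyscraper loci meet; ``essentially by the projection formula'' is therefore not yet a proof of (ii). It can be completed, and the projection formula is indeed the right tool: reduce to connected fibers by Stein factorization (finite pushforward is perverse $t$-exact, so the filtration is unchanged); the third summand above shows that the trace map $Rf_*\BQ_S[1]\to\BQ_C[1][-2]$ kills ${}^\Fp\tau_{\le1}$ and identifies $\mathrm{Gr}^P_2H^d(S,\BQ)$ with $H^{d-2}(C,\BQ)$, whence $P_1H^d(S,\BQ)=\ker\bigl(f_*:H^d(S,\BQ)\to H^{d-2}(C,\BQ)\bigr)$, equivalently the annihilator of $P_0H^*_c(S,\BQ)$ under the Poincar\'e pairing; then $f_*(f^*\alpha\cup y)=\alpha\cup f_*y=0$ for $y\in P_1$ gives (ii), while (i) is immediate because $P_0$ is the image of a ring homomorphism. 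So the skeleton (decomposition theorem over the curve plus projection formula) is right, but as written the proposal misidentifies $P_0$, spends its effort on a spurious case, and leaves the actual crux unproved.
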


\begin{rmk}
We do not know in general whether every multiplicative perverse filtration admits a strongly multiplicative decomposition to split it. In low dimensional case, perverse decomposition can be constructed explicitly. For example, we will give various equivalent conditions for a Hitchin-type fibration (Definition \ref{Hit}) to admit a strongly multiplicative perverse decomposition. Other examples with the affirmative answer include fibrations $S\to C$ such that $H^1(S,\BQ)=0$ or isotrivial families of curves over curves. 
\end{rmk}

\subsection{Perverse decomposition for Hilbert schemes of fibered surfaces}
Let $f:S\to C$ be a proper surjective morphism from a smooth quasi-projective surface to a smooth  curve. The defect $r(f)=1$, so the perverse filtration associated with $f$ has length 2:
\[
P_0H^*(S,\BQ)\subset P_1H^*(S,\BQ)\subset P_2H^*(S,\BQ)=H^*(S,\BQ).
\]
The fibration $f$ induces a map
\[
\pi_n:S^{[n]}\to C^{(n)},
\]
which is the composition of the Hilbert-Chow morphism $S^{[n]}\to S^{(n)}$ and the induced morphism on the symmetric products $S^{(n)}\to C^{(n)}$. We briefly review the description of the perverse filtration in \cite{Z} and the corresponding perverse decomposition constructed in \cite{SZ}. On the Cartesian product $f^n:S^n\to C^n$, the perverse filtration is
\[
P_kH^*(S^n,\BQ)=\langle \alpha_1\boxtimes\cdots\boxtimes\alpha_n\mid \Fp^f(\alpha_1)+\cdots+\Fp^f(\alpha_n)\le k\rangle.
\]
By taking the $\mathfrak{S}_n$-invariant part, the perverse filtration descends to the ones for the symmetric product $f^{(n)}:S^{(n)}\to C^{(n)}$. 
\[
P_kH^*(S^{(n)},\BQ)=\langle \textrm{P}(\alpha_1\boxtimes\cdots\boxtimes\alpha_n)\mid \Fp^f(\alpha_1)+\cdots+\Fp^f(\alpha_n)\le k\rangle,
\]
where the symmetrization operator $\textrm{P}$ is defined in (\ref{P}). The perverse filtration on the product of symmetric products $S^{(a_1)}\times\cdots\times S^{(a_n)}$ is defined similarly by the K\"unneth formula.

Now we turn to the Hilbert scheme $S^{[n]}$. Recall that for a partition $\nu=1^{a_1}\cdots n^{a_n}$ of $n$, we denote 
\[
S^{(\nu)}=S^{(a_1)}\times\cdots\times S^{(a_n)}.
\]
\begin{thm}{\cite[Corollary 4.14]{Z}}\label{505}
Let $\pi:S\to C$ be a proper map from a smooth quasi-projective surface onto a smooth curve. Then 
\begin{equation} \label{504}
    P_kH^d(S^{[n]},\BQ)=\bigoplus_\nu P_{k-n+l(\nu)}H^{d-2n+2l(\nu)}(S^{(\nu)},\BQ),
\end{equation}
where the perverse filtration is defined by the natural map $h:S^{[n]}\to C^{(n)}$.
\end{thm}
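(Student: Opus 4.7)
The plan is to factor $h$ through the Hilbert--Chow morphism and combine the sheaf-theoretic form of the G\"ottsche--Soergel decomposition with the further pushforward to $C^{(n)}$, tracking the shifts carefully. Write $h = f^{(n)} \circ \epsilon_n$ where $\epsilon_n : S^{[n]} \to S^{(n)}$ is the Hilbert--Chow morphism and $f^{(n)} : S^{(n)} \to C^{(n)}$ is the induced map on symmetric products. For each partition $\nu = 1^{a_1}\cdots n^{a_n}$ of $n$, let $\phi_\nu : S^{(\nu)} \to S^{(n)}$ denote the natural finite map, and set $f^{(\nu)} := f^{(n)} \circ \phi_\nu$. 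Because $C^{(\nu)} \to C^{(n)}$ is finite, the perverse filtration on $H^*(S^{(\nu)},\BQ)$ induced by $f^{(\nu)}$ agrees with the one induced by $S^{(\nu)} \to C^{(\nu)}$ used in the excerpt.

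The first key input is the sheaf-theoretic upgrade of the G\"ottsche--Soergel formula, which follows from the decomposition theorem applied to the semismall morphism $\epsilon_n$:
\[
R(\epsilon_n)_* \BQ_{S^{[n]}} \;\cong\; \bigoplus_\nu (\phi_\nu)_* \BQ_{S^{(\nu)}}[-2(n-l(\nu))] \quad \text{in } D^b_c(S^{(n)}).
\]
Pushing this forward along $f^{(n)}$ yields the corresponding direct-sum decomposition of $Rh_*\BQ_{S^{[n]}}$ in $D^b_c(C^{(n)})$. Because it is an isomorphism in the derived category and the perverse $t$-structure is compatible with direct sums, the canonical truncation morphism ${}^\Fp\tau_{\le k}(-) \to (-)$ decomposes summand by summand, so the proof reduces to analyzing each $\nu$-summand separately.

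The remainder is a matter of tracking shifts. The perverse filtration $P_k H^d(S^{[n]},\BQ)$ is obtained after the normalizing shift $[\dim S^{[n]} - r(h)] = [n]$, while on the $\nu$-summand the native shift for $f^{(\nu)}$ is $[\dim S^{(\nu)} - r(f^{(\nu)})] = [l(\nu)]$. Writing $Rf^{(\nu)}_*\BQ_{S^{(\nu)}}[-2(n-l(\nu))+n] = Rf^{(\nu)}_*\BQ_{S^{(\nu)}}[l(\nu)][l(\nu)-n]$ and applying the identity ${}^\Fp\tau_{\le k}(\CC[m]) = ({}^\Fp\tau_{\le k+m}\CC)[m]$ converts the truncation threshold from $k$ to $k - n + l(\nu)$, and the matching of hypercohomological degrees then forces $d' = d - 2n + 2l(\nu)$ on the $\nu$-summand, which is exactly the formula in (\ref{504}).

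The main technical obstacle I anticipate is verifying the two defect-of-semismallness identities $r(h) = n$ and $r(f^{(\nu)}) = l(\nu)$ used to normalize the shifts; these follow from a direct dimension count of the fibers of $h$ (a fiber over $\sum n_i [c_i]$ is $\prod_i (f^{-1}(c_i))^{[n_i]}$, which is $n$-dimensional) and of $f^{(\nu)}$ (a fiber is a product of symmetric powers of fibers of $f$, of total dimension $l(\nu)$). A secondary point is to argue---if one does not invoke G\"ottsche--Soergel as a black box---that the sheaf-level decomposition above actually holds, rather than merely its cohomological shadow; this is standard once one knows $\epsilon_n$ is semismall with trivial local systems on all strata, but requires some care in checking that the direct summands extend across the singular loci of the stratification of $S^{(n)}$.
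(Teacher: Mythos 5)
Your proposal is correct: the sheaf-level G\"ottsche--Soergel decomposition of $R(\epsilon_n)_*\BQ_{S^{[n]}}$ pushed forward to $C^{(n)}$, together with the bookkeeping of defects ($r(h)=n$, $r(f^{(\nu)})=l(\nu)$) and the identity ${}^\Fp\tau_{\le k}(\CC[m])=({}^\Fp\tau_{\le k+m}\CC)[m]$, yields exactly (\ref{504}), and the reduction of the filtration on $S^{(\nu)}$ over $C^{(n)}$ to the one over $C^{(\nu)}$ via perverse exactness of finite pushforward is the right justification. This is essentially the same argument as the one behind the quoted result \cite[Corollary 4.14]{Z} (the present paper only cites it, building on \cite{GS}), so there is nothing substantive to add beyond the standard checks you already flagged.
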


It is straightforward to check that once we fix a strongly multiplicative perverse decomposition $G_\bullet H^*(S,\BQ)$ associated with $f:S\to C$, the $G$-decompositions $G_\bullet H^*(S^n,\BQ)$, $G_\bullet H^*(S^{(n)},\BQ)$, and $G_\bullet H^*(S^{[n]},\BQ)$ constructed in Section 2.2 split the corresponding perverse filtrations. Therefore, they are perverse decompositions associated with maps $f^n:S^n\to C^n$, $f^{(n)}:S^{(n)}\to C^{(n)}$, and $\pi:S^{[n]}\to C^{(n)}$, respectively. Therefore, the main results for $G$-decompositions are valid for perverse decompositions.

\begin{prop}\label{7.5}
Let $f:S\to C$ be a proper surjective morphism from a smooth quasi-projective surface to a smooth curve. Then $G_\bullet H^*(S,\BQ)$ induces a perverse decomposition 
\[
\BH=\bigoplus_{\Fn,\Fd,\Fk} G_\Fk H^\Fd(S^{[\Fn]},\BQ).
\]
Let $\alpha\in G_kH^d(S,\BQ)$ if $n\ge0$, and $G_kH^d_c(S,\BQ)$ if $n<0$. We have
\begin{enumerate}
    \item The Nakajima operator $\Fq_n(\alpha)$ is of degree $(n,d+2n-2,k+n-1)$.
    \item The Virasoro operator $L_n(1)$ is of degree $(n,2n,n)$.
\end{enumerate}
Suppose that $G_\bullet H^*(S,\BQ)$ is strongly multiplicative.
\begin{enumerate}
    \item[(3)] The Virasoro operator $L_n(\alpha)$ is of degree $(n,d+2n,k+n)$.
\end{enumerate}
Suppose further that $K_S\in G_1H^2(S,\BQ)$.
\begin{enumerate}
    \item[(4)] The boundary operator $\partial$ is of degree $(0,2,1)$.
    \item[(5)] The ``cupping with $\alpha^{[n]}_l$" operator is of degree $(0,d+2l-4,k+l-2)$. 
\end{enumerate}
\end{prop}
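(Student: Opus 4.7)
The proposal is to reduce Proposition 7.5 to the general $G$-decomposition machinery built in Section 3, once we check that the induced $G$-decomposition on $H^*(S^{[n]},\BQ)$ coincides with (is) a perverse decomposition associated with $\pi_n : S^{[n]} \to C^{(n)}$.

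First, start with any perverse decomposition $G_\bullet H^*(S,\BQ)$ associated with $f:S\to C$. By Proposition \ref{pisg}, this is automatically a $G$-decomposition in the sense of Definition \ref{dfn} (of length $2 = \dim S$). Applying Proposition \ref{2.11} produces an induced $G$-decomposition
\[
G_kH^d(S^{[n]},\BQ)=\bigoplus_\nu G_{k-n+l(\nu)} H^{d-2n+2l(\nu)}(S^{(\nu)},\BQ).
\]
I then want to identify this with a perverse decomposition associated with $\pi_n$. The key observation is that the formula from Theorem \ref{505} for the perverse filtration on $H^*(S^{[n]},\BQ)$ has exactly the same shape as Proposition \ref{2.11}: one takes K\"unneth pieces on $S^{(\nu)}$ with a shift in both $k$ and $d$ by $n-l(\nu)$ and $2n-2l(\nu)$ respectively. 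Since the $G$-decomposition induced on $H^*(S^{(\nu)},\BQ)$ from its construction on the factors $H^*(S^{(a_i)},\BQ)$ splits the perverse filtration (this is immediate from the K\"unneth formula applied term by term, and from the fact that the perverse filtration on $S^{(a_i)}$ is induced from the $\mathfrak{S}_{a_i}$-invariants on $S^{a_i}$), the induced $G$-decomposition on the right-hand side above splits $P_\bullet H^d(S^{[n]},\BQ)$. Hence it is indeed a perverse decomposition associated with $\pi_n$, and the Fock space $\BH$ acquires a tridegree grading $(n,d,k)$ as claimed.

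Given this identification, statements (1)--(5) are not new results but are direct applications of the propositions proved in Section 3 to this particular $G$-decomposition. Namely, (1) is Proposition \ref{4.2}, (2) and (3) are Proposition \ref{4.3} (with (3) requiring the hypothesis of strong multiplicativity of the decomposition on $S$), (4) is Proposition \ref{4.7} (requiring strong multiplicativity of $G_\bullet H^*(S,\BQ)$ together with $K_S\in G_1H^2(S,\BQ)$), and (5) is Proposition \ref{5.1} specialized to $x=1$, or rather the operator form of it that Proposition \ref{5.1} literally asserts. No further calculation is needed.

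The only genuinely non-trivial point is the first step, i.e.\ verifying that the $G$-decomposition produced abstractly by Proposition \ref{2.11} coincides with a decomposition of $Rh_*\BQ_{S^{[n]}}$ into shifted perverse sheaves, which is exactly the content of Theorem \ref{505} (and its symmetric-product predecessors in \cite{Z}). Everything else is routine invocation of the Section 3 results, so the proposition is essentially a packaging statement translating the $G$-language back into the perverse-filtration language in which the main applications of Section 4 will be phrased.
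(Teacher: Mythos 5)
Your proposal is correct and takes essentially the same route as the paper: the paper likewise observes (via Proposition \ref{pisg} and the discussion surrounding Theorem \ref{505}) that the induced $G$-decomposition on $H^*(S^{[n]},\BQ)$ splits the perverse filtration associated with $\pi_n$ and is therefore a perverse decomposition, and then cites Propositions \ref{4.2}, \ref{4.3}, \ref{4.7}, and \ref{5.1} for statements (1)--(5). Your attribution of each item, including reading (5) as the operator-form statement of Proposition \ref{5.1}, matches the paper's proof.
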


\begin{proof}
Since the perverse decomposition $G_\bullet H^*(S,\BQ)$ is a $G$-decomposition by Proposition \ref{pisg}, the statements follows from Propositions \ref{4.2}, \ref{4.3}, \ref{4.7}, and \ref{5.1}.
\end{proof}

\begin{prop} \label{7.6}
Let $n\ge2$. Let $f:S\to C$ be a proper surjective morphism from a smooth quasi-projective surface to a smooth curve equipped with a perverse decomposition $G_\bullet H^*(S,\BQ)$. If the induced perverse decomposition $G_\bullet H^*(S^{[n]},\BQ)$ is strongly multiplicative, then the canonical class $K_S\in G_1H^2(S,\BQ)$. If $G_\bullet H^*(S,\BQ)$ is strongly multiplicative, then the converse is true.
\end{prop}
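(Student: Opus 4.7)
The plan is to deduce Proposition \ref{7.6} as an essentially immediate consequence of the two main general $G$-decomposition theorems already established, namely Theorem \ref{6.1} and Theorem \ref{6.2}. The only point that requires verification is that the hypotheses of those theorems genuinely apply to perverse decompositions, and that the perverse decomposition induced on $H^*(S^{[n]},\BQ)$ in the sense of Theorem \ref{505} agrees with the $G$-decomposition induced on the Hilbert scheme in the sense of Proposition \ref{2.11}.

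First, I would invoke Proposition \ref{pisg} to observe that every perverse decomposition associated with the flat morphism $f: S \to C$ is automatically a $G$-decomposition of length $2 = \dim S$ with coefficient $\BQ$. Hence the hypothesis ``perverse decomposition $G_\bullet H^*(S,\BQ)$'' immediately qualifies as an input to the $G$-decomposition machinery developed in Sections 2--3. Next, I would compare the formula in Theorem \ref{505},
\[
P_k H^d(S^{[n]},\BQ) = \bigoplus_\nu P_{k-n+l(\nu)} H^{d-2n+2l(\nu)}(S^{(\nu)},\BQ),
\]
with the inductive definition of the induced $G$-decomposition in Proposition \ref{2.11}, observing that both are built from the K\"unneth / symmetrization prescription on the strata $S^{(\nu)}$. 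Consequently, for any splitting $G_\bullet H^*(S,\BQ)$ of the perverse filtration, the induced $G$-decomposition on $H^*(S^{[n]},\BQ)$ splits the perverse filtration on $S^{[n]}$ associated with $\pi_n$, and therefore is itself a perverse decomposition. This identifies the two structures.

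With this identification in hand, the first implication is immediate: if $G_\bullet H^*(S^{[n]},\BQ)$ is strongly multiplicative, then by Theorem \ref{6.1} applied to the $G$-decomposition $G_\bullet H^*(S,\BQ)$, we conclude $K_S \in G_1 H^2(S,\BQ)$. Conversely, assuming that $G_\bullet H^*(S,\BQ)$ is itself strongly multiplicative and that $K_S \in G_1 H^2(S,\BQ)$, Theorem \ref{6.2} yields that the induced $G$-decomposition, which as noted above coincides with the induced perverse decomposition on $H^*(S^{[n]},\BQ)$, is strongly multiplicative.

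There is no real obstacle here; the whole statement is a translation of Theorems \ref{6.1} and \ref{6.2} from the abstract $G$-decomposition setting into the geometric setting of perverse decompositions for $\pi_n: S^{[n]} \to C^{(n)}$. The only mildly substantive step is checking that the two ways of inducing a decomposition on $H^*(S^{[n]},\BQ)$ from one on $H^*(S,\BQ)$ -- the K\"unneth/symmetrization recipe for $G$-decompositions from Section 2.2 and the perverse-filtration recipe of Theorem \ref{505} -- produce the same splitting when applied to a perverse decomposition on $S$; this is transparent from comparing the two formulas term by term on each stratum $S^{(\nu)}$.
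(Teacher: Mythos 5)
Your proposal is correct and follows essentially the same route as the paper: the paper's own proof is precisely to invoke Theorems \ref{6.1} and \ref{6.2}, after noting (via Proposition \ref{pisg} and the comparison of Proposition \ref{2.11} with Theorem \ref{505}) that a perverse decomposition on $H^*(S,\BQ)$ is a $G$-decomposition and that the induced $G$-decomposition on $H^*(S^{[n]},\BQ)$ splits the perverse filtration associated with $\pi_n$. Your explicit verification of the identification of the two induced decompositions is exactly the step the paper dismisses as ``straightforward to check,'' so nothing is missing.
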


\begin{proof}
The claim follows from Theorem \ref{6.1} and \ref{6.2}.
\end{proof}

Parallel to Proposition \ref{7.6}, we have the following necessary condition for the perverse filtration associated with $f:S^{[n]}\to C^{(n)}$ to be multiplicative.

\begin{prop} \label{pf}
Let $n\ge2$ Let $f:S\to C$ be a proper surjective morphism from a smooth quasi-projective surface to a smooth curve. Let $\pi:S^{[n]}\to C^{(n)}$ be the induced morphism. If the perverse filtration associated with $\pi$ is multiplicative, then $\Fp^f(K_S)\le1$.
\end{prop}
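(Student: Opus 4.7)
The plan is to imitate the computation of the self-intersection of the boundary divisor from the proof of Theorem \ref{6.1}, but to extract the weaker conclusion using only multiplicativity of the perverse filtration in place of strong multiplicativity of a splitting. Fix any perverse decomposition $G_\bullet H^*(S,\BQ)$ of the perverse filtration associated with $f$, and let $G_\bullet H^*(S^{[n]},\BQ)$ be the induced decomposition described in Section 4.2. Decompose $K_S=K_0+K_1+K_2$ along $H^2(S,\BQ)=\bigoplus_{i=0}^2 G_iH^2(S,\BQ)$. Since $P_1H^2(S,\BQ)=G_0H^2(S,\BQ)\oplus G_1H^2(S,\BQ)$, the inequality $\Fp^f(K_S)\le 1$ is equivalent to $K_2=0$.

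Next, expand $(\partial S^{[n]})^2 = -2\partial\cdot(\Fq_1(1))^{n-2}\Fq_2(1)\mathbf{1}$ by commuting $\partial$ past each Nakajima factor via Lemma \ref{comm} and Theorem \ref{2.4}, using $\partial\mathbf{1}=0$, exactly as in the proof of Theorem \ref{6.1}. The outcome is, up to signs, a linear combination of terms of the shape $(\Fq_1(1))^a L_1(1)(\Fq_1(1))^b\Fq_2(1)\mathbf{1}$ and $(\Fq_1(1))^{n-2}L_2(1)\mathbf{1}$, together with $(\Fq_1(1))^{n-2}\Fq_2(K_S)\mathbf{1}$. By Proposition \ref{7.5}(2), every term in the first group lies in $G_2H^4(S^{[n]},\BQ)$. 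Writing $\Fq_2(K_S)\mathbf{1}=\sum_{i=0}^2\Fq_2(K_i)\mathbf{1}$ and invoking Proposition \ref{7.5}(1), the $i$-th summand of $(\Fq_1(1))^{n-2}\Fq_2(K_S)\mathbf{1}$ lies in $G_{i+1}H^4(S^{[n]},\BQ)$.

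By Proposition \ref{boundary}, $\partial S^{[n]}\in G_1H^2(S^{[n]},\BQ)\subset P_1H^2(S^{[n]},\BQ)$, so the hypothesized multiplicativity of the perverse filtration on $S^{[n]}$ forces $(\partial S^{[n]})^2\in P_2H^4(S^{[n]},\BQ)=G_0\oplus G_1\oplus G_2$. Projecting the identity above onto the $G_3$-summand of $H^4(S^{[n]},\BQ)$ annihilates every contribution except the one proportional to $(\Fq_1(1))^{n-2}\Fq_2(K_2)\mathbf{1}$, which must therefore vanish in $H^*(S^{[n]},\BQ)$. Under G\"ottsche's isomorphism, this class corresponds to a nonzero multiple of $(1\cdot t)^{n-2}(K_2\cdot t^2)$ in $\textup{Sym}^*(H^*(S,\BQ)\otimes t\BQ[t])$ whenever $K_2\neq 0$; hence $K_2=0$, as required.

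The main conceptual point, beyond the calculation already carried out in Theorem \ref{6.1}, is to isolate the canonical contribution without access to strong multiplicativity of any $G$-decomposition. The essential observation is that Proposition \ref{7.5}(1) and (2) hold for \emph{any} perverse decomposition without a strong-multiplicativity hypothesis, so the $G_3$-component of $(\partial S^{[n]})^2$ is unambiguously captured by $(\Fq_1(1))^{n-2}\Fq_2(K_2)\mathbf{1}$, and multiplicativity of the filtration alone suffices to annihilate it.
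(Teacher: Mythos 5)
Your argument is correct and is essentially the paper's own proof: both expand $(\partial S^{[n]})^2$ via Lemma \ref{comm} and Theorem \ref{2.4}, place the $L_1(1)$ and $L_2(1)$ terms in $G_2H^4(S^{[n]},\BQ)\subset P_2H^4(S^{[n]},\BQ)$ by Proposition \ref{7.5}(1),(2), and use multiplicativity of the perverse filtration to constrain the remaining $\Fq_2(K_S)$ term. The only cosmetic difference is the last step: the paper reads off $\Fp^\pi\bigl((\Fq_1(1))^{n-2}\Fq_2(K_S)\mathbf{1}\bigr)=\Fp^f(K_S)+1$ directly from Theorem \ref{505}, while you split $K_S=K_0+K_1+K_2$ along a chosen perverse decomposition and kill the $G_3$-component using the Nakajima/G\"ottsche basis, which yields the same conclusion by the same mechanism.
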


\begin{proof}
Similar to the proof of Theorem \ref{6.1}, we calculate the self-intersection of the boundary divisor $\partial S^{[n]}$. We have seen that
\[
\begin{split}
\partial S^{[n]}\cdot \partial S^{[n]}=&-2\partial(\Fq_1(1))^{n-2}\Fq_2(1)\mathbf{1}\\
=&-2\sum_{i=0}^{n-3}(\Fq_1(1))^iL_1(1)(\Fq_1(1))^{n-3-i}\Fq_2(1)\mathbf{1}\\
&-2(\Fq_1(1))^{n-2}(2L_2(1)+\Fq_2(K_S))\mathbf{1}.
\end{split}
\]
By Proposition \ref{7.5}.(1),(2), we have
\[
(\Fq_1(1))^iL_1(1)(\Fq_1(1))^{n-3-i}\Fq_2(1)\mathbf{1}\in G_2H^4(S^{[n]},\BQ)\subset P_2H^4(S^{[n]},\BQ)
\]
and 
\[
(\Fq_1(1))^{n-2}(2L_2(1))\mathbf{1}\in G_2H^4(S^{[n]},\BQ)\subset P_2H^4(S^{[n]},\BQ).
\]
The multiplicativity of perverse filtration implies that $\Fp^\pi(\partial S^{[n]}\cdot\partial S^{[n]})\le 2$, so
\[
(\Fq_1(1))^{n-2}\Fq_2(K_S)\mathbf{1}\subset P_2H^4(S^{[n]},\BQ).
\]
In the decomposition (\ref{504}), $(\Fq_1(1))^{n-2}\Fq_2(K_S)\mathbf{1}$ is identified with $K_S\boxtimes \textrm{P}(1\boxtimes\cdots\boxtimes1)$ in the summand $\nu=(2,1,\cdots,1)$, Theorem \ref{505} implies that 
\[
\Fp^\pi((\Fq_1(1))^{n-2}\Fq_2(K_S)\mathbf{1})=\Fp^f(K_S)+1.
\]
Therefore $\Fp^f(K_S)\le1$.
\end{proof}

In fact, we have a nice geometric description for $\Fp^{f}(K_S)\le 1$.

\begin{prop} \label{pf2}
Let $f:S\to C$ be a proper surjective map from a quasi-projective surface to a smooth curve. Then $K_S\in P_1H^2(S,\BQ)$ if and only if $f$ is an elliptic fibration.
\end{prop}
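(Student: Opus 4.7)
The plan is to compute the image of $K_S$ in the top graded piece $\mathrm{Gr}_2^P H^2(S, \BQ) := P_2 H^2/P_1 H^2$ of the perverse filtration, and show that vanishing of this image is equivalent to the general fiber of $f$ being a smooth curve of genus $1$.

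First, I will identify the quotient $H^2(S, \BQ) \twoheadrightarrow \mathrm{Gr}_2^P H^2(S, \BQ)$ with the restriction map $\alpha \mapsto \alpha|_F$ to a general fiber $F = f^{-1}(p)$, showing that $\mathrm{Gr}_2^P H^2(S, \BQ) \cong H^2(F, \BQ) \cong \BQ$. The morphism $f$ is proper with defect of semismallness $r(f) = 1$, so the decomposition theorem gives a splitting $Rf_*\BQ_S[1] \cong \bigoplus_{i=0}^{2} \CP_i[-i]$ with $\CP_i$ perverse on $C$. Over an open $U \subset C$ where $f$ is smooth with connected projective fibers, $R^2 f_*\BQ_S|_U \cong \BQ_U$, so its middle extension to $C$ is $\BQ_C[1]$, and any further summands of $\CP_2$ arising from reducible singular fibers are skyscrapers which contribute zero to $\BH^{-1}$. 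Hence $\mathrm{Gr}_2^P H^2(S, \BQ) = \BH^{-1}(C, \CP_2) = H^0(C, \BQ) = \BQ$, and the quotient map coincides with restriction to $F$.

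Second, I will compute $K_S|_F$ by adjunction. For general $p \in C$, the fiber $F$ is a smooth projective curve with normal bundle $N_{F/S} \cong f^*\CO_C(p)|_F$ trivial, so $F^2 = 0$ on $S$. Adjunction yields $K_F = (K_S + F)|_F = K_S|_F$, hence $\deg(K_S|_F) = 2g(F) - 2$, where $g(F)$ denotes the genus of the general fiber. Combining the two steps, $K_S \in P_1 H^2(S, \BQ)$ if and only if its image in $\mathrm{Gr}_2^P H^2 \cong \BQ$ vanishes, if and only if $\deg(K_S|_F) = 0$, if and only if $g(F) = 1$, if and only if $f$ is an elliptic fibration.

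The main obstacle will be the careful identification of $\mathrm{Gr}_2^P H^2(S, \BQ)$ with restriction to a general fiber, which requires the decomposition theorem and an explicit structural analysis of perverse sheaves on the (possibly quasi-projective) base $C$, with attention to the skyscraper contributions from reducible fibers. Should this become technical, I would instead invoke the de Cataldo--Migliorini geometric description of the perverse filtration via preimages of general flags of linear sections, which gives directly $P_1 H^2(S, \BQ) = \ker\!\bigl(H^2(S, \BQ) \to H^2(F, \BQ)\bigr)$ for $F$ a general fiber, thereby reducing the proposition immediately to the adjunction computation above.
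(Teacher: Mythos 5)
Your proposal is correct and follows essentially the same route as the paper: both use the decomposition of $Rf_*\BQ_S[1]$ for a surface fibered over a curve (plus proper base change) to identify $P_1H^2(S,\BQ)$ with the kernel of restriction to a general smooth fiber, with $\mathrm{Gr}_2^PH^2\cong\BQ$ detected by that restriction, and then conclude by the adjunction formula $K_S|_F=K_F$ (trivial normal bundle), so that $K_S\in P_1H^2$ exactly when the general fiber has genus $1$. The only difference is cosmetic: you phrase both implications at once through the graded piece, whereas the paper argues the two directions separately.
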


\begin{proof}
Let $F$ denote any smooth fiber $f^{-1}(x)$, $x\in C$.

If $f:S\to C$ is an elliptic fibration, then $K_F=0$ and the normal bundle $\CN_{F/S}$ is trivial. Then by adjunction formula
\[
K_S|_F=K_F-F|_F=0.
\]
Note that $\textrm{Gr}_2^PH^2(S,\BQ)$ is spanned by a generic section class, which restricts nontrivially to any general fiber. So $K_S\in P_1H^2(S,\BQ)$.

Conversely, Let $K_S\in P_1H^1(S,\BQ)$. Let $C^\circ$ be the open subset of $C$ consists of points whose fibers are smooth. Let $j:C^\circ\to C$ be the open embedding. By \cite[Theorem 3.2.3]{dCM1}, we have a decomposition
\begin{equation} \label{p2}
Rf_*\BQ_S[1]=\BQ_C[1]\bigoplus(j_*L[1]\oplus\oplus_p\BQ_p^{n_p-1})[-1]\bigoplus(\BQ_C[1])[-2]
\end{equation}
where $p$ runs through singular values of $f$ and $L$ is a local system on $C^\circ$. Then 
\[
K_S\in \BH\left(\BQ_C[1]\bigoplus(j_*L[1]\oplus\oplus_p\BQ_p^{n_p-1})[-1]\right).
\]
Applying the proper base change theorem to the Cartesian square
\[
\begin{tikzcd}
F\arrow[r]\arrow[d,"f'"]& S\arrow[d,"f"]\\
\{x\}\arrow[r]&C,
\end{tikzcd}
\]
the stalk of (\ref{p2}) at $c$ yields
\[
Rf'_*\BQ_{F}=\BQ_x\bigoplus\BQ_x^{2g}[-1]\bigoplus\BQ_x[-2]
\]
where $g$ is the genus of the general fiber. Since $\BH^2\left(\BQ_x\bigoplus\BQ_x^{2g}[-1]\right)=0$, $K_S|_F=0$ under the restriction $H^2(S,\BQ)\to H^2(F,\BQ)$. Since $F$ is a smooth fiber of $f:S\to C$, its normal bundle in $S$ is trivial. Therefore 
\[
K_F=(K_S+F)|_F=0
\]
by the adjunction formula and hence $F$ is an elliptic curve. 
\end{proof}

Combining Proposition \ref{pf}, \ref{pf2} and \cite[Proposition 4.17]{Z}, we have:
\begin{thm} \label{pie}
Let $f:S\to C$ be a proper surjective morphism from a smooth quasi-projective to a smooth curve. Let $\pi_n:S^{[n]}\to C^{(n)}$ be the induced morphism. Then
\begin{enumerate}
    \item The perverse filtration associated with $\pi_1$ is always multiplicative.
    \item Let $n\ge 2$. If the perverse filtration associated with $\pi_n$ is multiplicative, then $f$ is an elliptic fibration.
\end{enumerate}
\end{thm}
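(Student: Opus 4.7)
The proof is essentially a synthesis of the three preceding propositions, so my plan is to read off each part of the theorem from results already in hand.

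For part (1), the plan is simply to observe that when $n=1$ the Hilbert scheme $S^{[1]}$ is $S$ itself, the symmetric product $C^{(1)}$ is $C$, and the induced map $\pi_1:S^{[1]}\to C^{(1)}$ coincides with the original fibration $f:S\to C$. Thus Proposition \ref{pro} (i.e.\ \cite[Proposition 4.17]{Z}), which asserts multiplicativity of the perverse filtration for any surjection from a smooth quasi-projective surface to a smooth curve, applies directly and finishes this case.

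For part (2), I would chain together Proposition \ref{pf} and Proposition \ref{pf2}. Assuming multiplicativity of the perverse filtration associated with $\pi_n$ for some $n\ge 2$, Proposition \ref{pf} yields $\Fp^f(K_S)\le 1$, that is $K_S\in P_1H^2(S,\BQ)$. Then Proposition \ref{pf2} translates this cohomological condition into the geometric statement that $f:S\to C$ is an elliptic fibration. Concatenating the two implications gives exactly the conclusion.

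Since every ingredient has been established earlier in the paper, there is no genuine obstacle to overcome at this stage; the work has all been loaded into Propositions \ref{pro}, \ref{pf}, and \ref{pf2}. The only thing to be careful about is bookkeeping: verifying that $\pi_1$ in the statement is indeed the original $f$ (this is immediate from the definition of $\pi_n$ as the composition of Hilbert--Chow with the symmetric-product map, which degenerates to the identity composed with $f$ when $n=1$), and that the hypothesis in Proposition \ref{pf}, namely multiplicativity of the perverse filtration on $H^*(S^{[n]},\BQ)$ with respect to $\pi_n$, is exactly the hypothesis assumed in (2). With these identifications in place the theorem is proved in a few lines.
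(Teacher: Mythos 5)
Your proposal is correct and matches the paper's own argument, which states the theorem as obtained by "Combining Proposition \ref{pf}, \ref{pf2} and \cite[Proposition 4.17]{Z}": part (1) is exactly Proposition \ref{pro} applied to $\pi_1=f$, and part (2) is the concatenation of Proposition \ref{pf} with Proposition \ref{pf2}. Nothing is missing.
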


\begin{rmk}
It is natural to ask whether the perverse filtration associated with $\pi:S^{[n]}\to C^{(n)}$ is multiplicative if and only if $\Fp^f(K_S)\le 1$. We believe that the statement is true. In fact, by a similar argument as Proposition \ref{5.1}, $\Fp(K_S)\le 1$ implies that 
\[
\alpha^{[n]}_l\cdot x\in P_{K+k+l-2}H^*(S^{[n]},\BQ)
\]
and in particular
\[
\alpha^{[n]}_l\in P_{k+l-2}H^*(S^{[n]},\BQ)
\]
for $\alpha\in P_kH^*(S,\BQ)$ and $x\in P_KH^*(S^{[n]},\BQ)$, but it is difficult to determine the precise perversity of $\alpha^{[n]}_l$, which prevents comparing $\Fp^\pi\left(\alpha^{[n]}_l\right)+\Fp^\pi(x)$ and $\Fp^\pi\left(\alpha^{[n]}_l\cdot x\right)$. However, when $f:S\to\BA^1$ with one singular fiber or when $n=2$, the converse of Theorem \ref{pie} is true. See Section 4.3 and 4.4.
\end{rmk}

\subsection{Hitchin-type fibrations}
In this section, we study the perverse filtration of Hilbert schemes of surfaces fibered over $\BA^1$ which behave like the Hitchin fibrations of 2 dimensional moduli spaces of Higgs bundles. We will show that the perverse filtration is multiplicative if and only if the fibration is elliptic.  More precisely, we consider the fibered surfaces satisfying the following condition:

\begin{defn}\label{Hit}
$f:S\to \BA^1$ called a \emph{Hitchin-type fibration} if it is a proper map from a connected smooth surface onto the affine line, such that the restriction
\[
H^*(S,\BQ)\to H^*(f^{-1}(0),\BQ)
\]
is an isomorphism.
\end{defn}

Let $U$ be the largest open set in $\BA^1$ such that the restriction $f:f^{-1}(U)\to U$ is smooth. Let $j:U\to\BA^1$ be the open embedding. We have the following decomposition for Hitchin-type fibrations. 
\begin{equation}\label{dec}
Rf_*\BQ_S[1]=\BQ_{\BA^1}[1]\bigoplus\left(j_*L[1]\oplus\oplus_p\BQ_0^{\oplus k_p-1}\right)[-1]\bigoplus (\BQ_{\BA^1}[1])[-2],
\end{equation}
where $p$ runs through $\BA^1\setminus U$, $k_p$ is the number of irreducible components of the  fiber $f^{-1}(p)$, and $L$ is the local system on $U$ which corresponds to the representation 
\[
\pi_1(U,t)\to \textrm{GL}(H^1(f^{-1}(t),\BQ)).
\]
for $t\in U$. In particular, we have
\begin{gather}
    H^0(S,\BQ)=P_0H^0(S,\BQ)=\BQ\label{432},\\
    H^1(S,\BQ)=P_1H^1(S,\BQ),\quad P_0H^1(S,\BQ)=0,\\
    P_1H^2(S,\BQ)=\textrm{Gr}_1^PH^2(S,\BQ),\quad P_0H^2(S,\BQ)=0,\\
    \textrm{Gr}^P_2H^2(S,\BQ)\cong\BQ\label{434}.
\end{gather}

To study the existence of a strongly multiplicative perverse decomposition associated with Hitchin-type fibrations, we study the following two maps. Let $cl:H_2(f^{-1}(0),\BQ)\to H^2(f^{-1}(0),\BQ)$ be the composition
\begin{equation}
\begin{tikzcd}\label{long}
H_2(f^{-1}(0),\BQ)\arrow{d}{\cong}[swap]{i_*}\arrow[rr,"cl"]& & H^2(f^{-1}(0),\BQ)\\
H_2(S,\BQ)\arrow{r}{\cong}[swap]{PD}&H^2_c(S,\BQ)\arrow[r,"\iota"] &H^2(S,\BQ)\arrow{u}{i^*}[swap]{\cong}.
\end{tikzcd}
\end{equation}
and let
\[
c:H^1(f^{-1}(0),\BQ)\otimes H^1(f^{-1}(0),\BQ)\to H^2(f^{-1}(0),\BQ)
\]
be the cup product on the central fiber.

\begin{prop} \label{cl}
The image $\textup{Im}\,cl$ is $k-1$ dimensional, where $k$ is the number of irreducible components of $f^{-1}(0)$. More precisely, let 
\[
f^{-1}(0)=b_1 E_1+\cdots+b_kE_k\in H_2(f^{-1}(0),\BQ)
\]
be the cycle theoretic fiber in $S$. Then
\begin{equation} \label{in}
\textup{Im}\, cl=\{c_1p_1+\cdots+c_kp_k\in H^2(f^{-1}(0))\mid b_1c_1+\cdots+b_kc_k=0,\,c_i\in\BQ\},
\end{equation}
where $\{p_i\in H^2(f^{-1}(0),\BQ)\}$ is the dual basis of $\{E_i\}$, i.e.
\begin{eqnarray*}
    p_i:&H_2(f^{-1}(0),\BQ)&\to\BQ\\
         &  E_j&\mapsto \delta_{i,j}. 
\end{eqnarray*}

\end{prop}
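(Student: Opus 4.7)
The plan is to choose the obvious basis $\{[E_1],\dots,[E_k]\}$ of $H_2(f^{-1}(0),\BQ)$ (the top homology of the projective curve $f^{-1}(0)$ is spanned by its irreducible components) and compute $cl$ on this basis directly in terms of the intersection form on $S$. The dual basis of $H^2(f^{-1}(0),\BQ)$ under the universal coefficient pairing is then exactly $\{p_1,\dots,p_k\}$, so it suffices to prove the identity
\[
cl([E_j])=\sum_{i=1}^k (E_i\cdot E_j)_S\, p_i,
\]
where $(E_i\cdot E_j)_S$ denotes the intersection number of the divisors $E_i,E_j$ on the smooth surface $S$.

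To establish this formula I would unwind the composition in (\ref{long}). The Poincar\'e isomorphism $PD:H_2(S,\BQ)\to H^2_c(S,\BQ)$ sends $i_*[E_j]$ to the Thom class of the divisor $E_j$, whose image under the forgetful map $\iota:H^2_c(S,\BQ)\to H^2(S,\BQ)$ is the ordinary divisor class $[E_j]$. Pulling back to $H^2(f^{-1}(0),\BQ)$ and evaluating on $[E_i]$ gives, by the projection formula,
\[
\langle i^*[E_j],[E_i]\rangle_{f^{-1}(0)}=\langle[E_j],i_*[E_i]\rangle_S=(E_i\cdot E_j)_S,
\]
which proves the formula above. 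Together with the fiber relation $\sum_i b_i(E_i\cdot E_j)_S=F\cdot E_j=0$ (since the class of the fiber $F$ can be represented by a nearby smooth fiber disjoint from $E_j$), this shows $\mathrm{Im}\,cl$ is contained in the hyperplane on the right-hand side of (\ref{in}).

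To finish, I need the reverse inclusion, and for this it suffices to show $\dim\mathrm{Im}\,cl=k-1$. The Hitchin-type hypothesis forces $H^0(S,\BQ)\xrightarrow{\sim} H^0(f^{-1}(0),\BQ)$, hence $f^{-1}(0)$ is connected, so Zariski's lemma applies: the symmetric matrix $\bigl((E_i\cdot E_j)_S\bigr)_{i,j}$ is negative semidefinite with one-dimensional kernel spanned by $(b_1,\dots,b_k)$. Therefore its rank is exactly $k-1$, matching the dimension of the hyperplane. The main piece of bookkeeping is the careful identification of the Poincar\'e dual of a cycle supported in $f^{-1}(0)$ with its intersection-theoretic divisor class on $S$; once that is in hand the statement reduces to classical fiber intersection theory on fibered surfaces.
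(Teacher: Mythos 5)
Your proposal is correct and follows essentially the same route as the paper's proof: compute $cl$ on the basis $\{[E_i]\}$ to identify it with the intersection matrix $\bigl((E_i\cdot E_j)\bigr)$, use $f^{-1}(0)\cdot E_j=0$ to get the inclusion into the hyperplane, and invoke Zariski's lemma to get rank $k-1$ and conclude by dimension count. Your explicit unwinding of the composition defining $cl$ and the remark that connectedness of $f^{-1}(0)$ (from the Hitchin-type hypothesis) is what makes the kernel one-dimensional are fine refinements of details the paper leaves implicit.
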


\begin{proof}
The properties of the map $cl$ are studied in detail in \cite[Section 2.1]{dCM1} with slightly different notations. For convenience of the reader, we give a self-contained proof here. The map $cl:H_2(f^{-1}(0),\BQ)\to H^2(f^{-1}(0),\BQ)$ induces the refined intersection bilinear form
\[
\eta: H_2(f^{-1}(0),\BQ)\times H_2(f^{-1}(0),\BQ)\to \BQ
\]
with associated symmetric matrix $||E_i\cdot E_j||$. 
By Zariski's lemma \cite[Chapter III, 8.2]{BHPV}, the paring $\eta$ is of rank $k-1$, and hence $cl$ is of rank $k-1$. We have
\begin{equation}\label{211}
cl(E_i)=\sum_{j=1}^k(E_i\cdot E_j)p_j.
\end{equation}
Since $f^{-1}(0)\cdot E_j=0$, 
\begin{equation}\label{212}
\sum_{i=1}^kb_i(E_i\cdot E_j)=0.
\end{equation}
By comparing (\ref{211}) and (\ref{212}), we see that the image of $cl$ is contained in the $(k-1)$-dimensional subspace
\begin{equation} 
\{c_1p_1+\cdots+c_kp_k\in H^2(f^{-1}(0),\BQ)\mid b_1c_1+\cdots+b_kc_k=0,\,c_i\in\BQ\}
\end{equation}
and hence they coincide by dimensional reason.
\end{proof}

\begin{lem}\label{mv}
Let $X=\cup_{i=1}^k E_i$ be a proper singular curve, where $E_i$ are the irreducible components of $X$. Let $\tilde{E}_i$ be the normalization of $E_i$. Then $r:\tilde{X}=\sqcup \tilde{E}_i\to X$ is the normalization of $X$. The restriction map
\[
r^*:H^2(X,\BQ)\xrightarrow{\sim} H^2(\tilde{X},\BQ)=\bigoplus_{i=1}^kH^2(\tilde{E}_i,\BQ)
\]
is an isomorphism and
\[
r^*:H^1(X,\BQ)\twoheadrightarrow H^1(\tilde{X},\BQ)=\bigoplus_{i=1}^kH^1(\tilde{E}_i,\BQ)
\]
is surjective.
\end{lem}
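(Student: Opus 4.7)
\medskip

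The plan is to deduce both statements from the standard short exact sequence comparing $\BQ_X$ and the pushforward of the constant sheaf on the normalization.

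First I would let $\Sigma \subset X$ denote the (finite) set of non-normal points of $X$, which consists of the singular points of each $E_i$ together with the points where distinct components meet. Since $r \colon \tilde X \to X$ is a finite surjective morphism and an isomorphism over $X \setminus \Sigma$, the natural map $\BQ_X \to r_*\BQ_{\tilde X}$ is injective, and fits into a short exact sequence of sheaves on $X$
\[
0 \to \BQ_X \to r_*\BQ_{\tilde X} \to \CF \to 0,
\]
where $\CF$ is a skyscraper sheaf supported on $\Sigma$ whose stalk at $p \in \Sigma$ is $\BQ^{|r^{-1}(p)|}/\BQ$ (the diagonal quotient).

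Next I would take the long exact sequence in cohomology. Because $r$ is finite we have $R^i r_* \BQ_{\tilde X} = 0$ for $i > 0$, so by the Leray spectral sequence $H^\ast(X, r_*\BQ_{\tilde X}) = H^\ast(\tilde X, \BQ) = \bigoplus_i H^\ast(\tilde E_i, \BQ)$. Because $\CF$ is a skyscraper sheaf on a finite set, $H^i(X, \CF) = 0$ for every $i \ge 1$. The long exact sequence therefore reduces to
\[
0 \to H^0(X, \BQ) \to H^0(\tilde X, \BQ) \to H^0(X, \CF) \to H^1(X, \BQ) \xrightarrow{r^*} H^1(\tilde X, \BQ) \to 0
\]
together with the isomorphism $H^2(X, \BQ) \xrightarrow{r^*} H^2(\tilde X, \BQ)$ (and vanishing in higher degree). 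Reading off the two claims from this sequence completes the proof.

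There is essentially no obstacle here: the only point that requires care is justifying that $\CF$ is indeed a skyscraper sheaf with no higher cohomology and that $r$ being finite makes its pushforward cohomologically trivial in positive degree; both are standard. The identification $H^\ast(\tilde X, \BQ) = \bigoplus_i H^\ast(\tilde E_i, \BQ)$ is just the K\"unneth/disjoint union decomposition.
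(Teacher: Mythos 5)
Your argument is correct, but it follows a genuinely different route from the paper. The paper disposes of the lemma in two lines by citing intersection cohomology: for a curve the intersection complex is the (shifted) pushforward of the constant sheaf from the normalization, so $IH^*(X,\BQ)\cong H^*(\tilde X,\BQ)$, and by the proofs of Propositions 8.2.19 and 8.2.32 in Hotta--Takeuchi--Tanisaki the natural maps $H^2(X,\BQ)\to IH^2(X,\BQ)$ and $H^1(X,\BQ)\to IH^1(X,\BQ)$ are an isomorphism and a surjection, respectively. You instead argue directly from the normalization sequence $0\to\BQ_X\to r_*\BQ_{\tilde X}\to\CF\to 0$, using that $r$ is finite (so $R^ir_*\BQ_{\tilde X}=0$ for $i>0$ by proper base change) and that the skyscraper $\CF$ has no higher cohomology; the long exact sequence then yields both claims at once. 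The two arguments are essentially the same computation in disguise: for a curve the comparison maps $H^k(X,\BQ)\to IH^k(X,\BQ)$ are exactly your maps $H^k(X,\BQ_X)\to H^k(X,r_*\BQ_{\tilde X})$. What your version buys is a self-contained, elementary proof avoiding the IH machinery; what the paper's buys is brevity via citation. Two small points to make explicit if you write this up: injectivity of $\BQ_X\to r_*\BQ_{\tilde X}$ uses surjectivity of $r$, and the stalk of $\CF$ at a unibranch singular point (e.g.\ a cusp) vanishes, so $\CF$ is supported only where $|r^{-1}(p)|\ge 2$ --- harmless, but your description of the support should be read with that in mind; also one should note that sheaf cohomology of the constant sheaf computes singular cohomology here (local contractibility of complex varieties), and that cohomology only depends on the reduced structure of the fiber.
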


\begin{proof}
By the proof of \cite[Proposition 8.2.19]{Hotta} and \cite[Proposotion 8.2.32]{Hotta},  
\begin{align*}
H^2(X,\BQ)=IH^2(X,\BQ)=IH^2(\tilde{X},\BQ)=H^2(X,\BQ),\\
H^1(X,\BQ)\twoheadrightarrow IH^1(X,\BQ)=IH^1(\tilde{X},\BQ)=H^1(X,\BQ).
\end{align*}
\end{proof}

\begin{prop} \label{snc}
Let $X=\cup_{i=1}^k E_i$ be a proper singular curve, with irreducible components $E_1,\cdots,E_k$. Let $g$ denote the geometric genus, and let $p_i$ be the generator of $H^2(E_i,\BQ)$. Then
\[
\textup{Im}\,c=\left\langle p_i\in H^2(X,\BQ)\mid g(E_i)>0,\,i=1,\cdots,k.\right\rangle
\]
\end{prop}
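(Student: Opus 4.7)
My plan is to reduce everything to the normalization $r: \tilde{X} = \sqcup_i \tilde{E}_i \to X$ and then invoke Poincaré duality on each smooth component. By Lemma \ref{mv}, $r^*: H^2(X,\BQ) \to H^2(\tilde{X},\BQ) = \bigoplus_i H^2(\tilde{E}_i,\BQ)$ is an isomorphism and $r^*: H^1(X,\BQ) \to H^1(\tilde{X},\BQ) = \bigoplus_i H^1(\tilde{E}_i,\BQ)$ is surjective. Moreover, applying the same lemma to the individual (possibly singular) component $E_i$ identifies its generator $p_i \in H^2(E_i,\BQ)$ with the fundamental class of $\tilde{E}_i$ inside $H^2(\tilde{X},\BQ)$.

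Since pullback in cohomology is a ring homomorphism, $r^*(c(\alpha,\beta)) = r^*\alpha \cup r^*\beta$, so we have a commutative square
\[
\begin{tikzcd}
H^1(X,\BQ) \otimes H^1(X,\BQ) \arrow{r}{c} \arrow[two heads]{d} & H^2(X,\BQ) \arrow{d}{\sim} \\
H^1(\tilde{X},\BQ) \otimes H^1(\tilde{X},\BQ) \arrow{r}{\tilde{c}} & H^2(\tilde{X},\BQ).
\end{tikzcd}
\]
Because the left vertical arrow is surjective and the right one is an isomorphism, $\textup{Im}\,c$ corresponds under $r^*$ to $\textup{Im}\,\tilde{c}$. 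So it suffices to compute $\textup{Im}\,\tilde c$.

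Since $\tilde{X}$ is a disjoint union of the smooth projective curves $\tilde{E}_i$, the K\"unneth-style decomposition gives $\tilde{c} = \bigoplus_i \tilde{c}_i$ where $\tilde{c}_i: H^1(\tilde{E}_i) \otimes H^1(\tilde{E}_i) \to H^2(\tilde{E}_i)$; the cross terms $\tilde c(\alpha_i,\beta_j)$ for $i\ne j$ vanish because classes supported on different connected components have zero cup product. For each smooth projective curve $\tilde{E}_i$ of genus $g_i = g(E_i)$, Poincar\'e duality tells us that $\tilde{c}_i$ is a perfect pairing when $g_i > 0$, and is identically zero when $g_i = 0$ (as $H^1(\tilde{E}_i) = 0$). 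Hence $\textup{Im}\,\tilde{c}_i = H^2(\tilde{E}_i)$ if $g(E_i) > 0$ and is trivial otherwise, which transports via $(r^*)^{-1}$ to the claimed description of $\textup{Im}\,c$.

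There is no real obstacle here; the only thing to be careful about is the identification of the generators $p_i$ under the isomorphism $r^*: H^2(X,\BQ) \xrightarrow{\sim} H^2(\tilde{X},\BQ)$, which follows from naturality of Lemma \ref{mv} applied componentwise.
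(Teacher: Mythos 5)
Your proof is correct and takes essentially the same route as the paper: both reduce to the normalization via Lemma \ref{mv} and use that the cup product $H^1\otimes H^1\to H^2$ on a smooth projective curve is surjective exactly when the genus is positive. Your packaging via the commutative square and the surjectivity of $r^*\otimes r^*$ is just a streamlined version of the paper's component-by-component lifting argument, with the same identification of $p_i$ with the generator of $H^2(\tilde{E}_i,\BQ)$ under the isomorphism $r^*$ on $H^2$.
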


\begin{proof}
We first show that $\textrm{Im}\,c$ is a linear subspace spanned by a subset of $\{p_1,\cdots,p_k\}$, and then determine which ones are in $\textrm{Im}\,c$.

Denote $r_i^*:H^*(X,\BQ)\to H^*(\tilde{E}_i,\BQ)$. We identify $\gamma\in H^2(X,\BQ)$ with $r^*\gamma\in H^2(\tilde{X},\BQ)$ by Lemma \ref{mv}. In particular, $r^*_i\gamma$ can be viewed in $H^2(X,\BQ)$ and $\gamma=\sum_i r^*_i\gamma.$ We claim that if $\gamma\in H^2(X,\BQ)$ is in $\textrm{Im}\, c$, then its restrictions $r_i^*\gamma$ are also in $\textrm{Im}\, c$ for all $i$. In fact, let $\gamma=\sum_s \alpha_s\cup \beta_s$ where $\alpha_s,\beta_s\in H^1(X,\BQ)$, then $r_i^*\gamma=\sum r_i^*\alpha_s\cup r_i^*\beta_s$. By Lemma \ref{mv}, there exists $\alpha_{is}'\in H^1(X,\BQ)$ such that $r^*\alpha_{is}'=(0,\cdots,r^*_i\alpha_s,\cdots,0)$. We obtain $\beta_{is}'$ similarly. Then $r^*_i\gamma=\sum_s r^*(\alpha_{is}'\cup\beta_{is}')$ is an identity in $H^2(\tilde{X},\BQ)$. Since $r^*_i\gamma\in H^2(\tilde{X})$ is a scalar multiple of $r^*p_i$, we denote $r^*_i\gamma=k\cdot r^*p_i$. Therefore we have $k\cdot r^*p_i=r^*(\sum_s\alpha_{is}'\cup\beta_{is}')$. By Lemma \ref{mv} again, we have $k\cdot p_i=\sum_s\alpha_{is}'\cup\beta_{is}'\in \textrm{Im}\,cl$. We conclude that 
\[
\textrm{Im}\,c=\bigoplus_i r_i^*\textrm{Im}\,c=\langle p_i\mid p_i\in\textrm{Im}\,c\rangle.
\] 

Let $E_i$ be a irreducible component such that the geometric genus $g(E_i)>0$. Then there exists $\alpha_i,\beta_i\in H^1(\tilde{E_i},\BQ)$ such that $\alpha_i\cup\beta_i=[pt\in \tilde{E_i}]=r^*p_i$. By Lemma \ref{mv}, there exists $\alpha,\beta\in H^1(X,\BQ)$ such that $\alpha_i=r^*\alpha$ and $\beta_i=r^*\beta$. Therefore $r^*(\alpha\cup\beta)=r^*p_i$, and hence $p_i=\alpha\cup \beta$ by Lemma \ref{mv}. This implies $p_i\in \mathrm{Im}\,c$.

Similarly, if the geometric genus $g(E_i)=0$, then $H^1(\tilde{E}_i,\BQ)=0$. If $p_i=\sum_s\alpha_{is}\cup\beta_{is}$ were in $\textrm{Im}\,c$, then the right side of the restriction $r_i^*p_i=\sum_sr^*_i\alpha_{is}\cup r^*_i\beta_{is}$ would be zero, which is a contradiction.
\end{proof}

\begin{thm} \label{star}
Let $f:S\to \BA^1$ be a Hitchin-type fibration. Then the following are equivalent.
\begin{enumerate}
    \item $f$ admits a strongly multiplicative perverse decomposition.
    \item $\textup{Im}\, c\cap\textup{Im}\,cl =\{0\}$.
    \item $\dim \textup{Im}\,c\le1$.
    \item $f^{-1}(0)$ has at most 1 irreducible components of positive geometric genus.
\end{enumerate}
\end{thm}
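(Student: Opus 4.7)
The plan is to reduce the existence of a strongly multiplicative splitting to a linear-algebra condition on $H^2(S,\BQ)$, then transport that condition across the restriction isomorphism $i^*$, and finally run a small combinatorial argument using Propositions \ref{cl} and \ref{snc}. Because $f$ is Hitchin-type, $H^*(S,\BQ)\cong H^*(f^{-1}(0),\BQ)$ is concentrated in degrees $\le 2$. Combining this with (\ref{432})--(\ref{434}), every splitting $G_\bullet$ of $P_\bullet$ is forced to have $G_0=\BQ\cdot\mathbf{1}$, $G_1H^1=H^1(S,\BQ)$, $G_1H^2=P_1H^2(S,\BQ)$, and $G_2H^2$ an arbitrary complement of $P_1H^2$ in $H^2(S,\BQ)$. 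Any cup product of $G$-pieces of total cohomological degree $\ge 3$ vanishes for free, so the only non-trivial strong-multiplicativity constraint is $\textup{Im}\,c_S\subset G_2H^2$, where $c_S$ is cup product on $H^1(S,\BQ)$. Such a complement exists iff $\textup{Im}\,c_S\cap P_1H^2(S,\BQ)=\{0\}$, which I take as the intermediate form of (1).

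Next I translate this condition through $i^*$. Because $i^*$ is a ring isomorphism it carries $\textup{Im}\,c_S$ onto $\textup{Im}\,c$, so I am left to identify $i^*P_1H^2(S,\BQ)$ with $\textup{Im}\,cl$. For the inclusion $\supset$, each fundamental class $[E_j]\in H^2(S,\BQ)$ is supported on the central fiber, hence lies in $P_1H^2$; unwinding the diagram (\ref{long}) gives $i^*[E_j]=cl(E_j)$, so $\textup{Im}\,cl\subset i^*P_1H^2(S,\BQ)$. For equality I use the decomposition (\ref{dec}): the Hitchin-type hypothesis forces the singular fibers of $f$ to contribute only over $0$ and forces $\BH^1(\BA^1,j_*L)$ to vanish, yielding $\dim P_1H^2(S,\BQ)=k-1$, which matches $\dim\textup{Im}\,cl$ from Proposition \ref{cl}. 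This establishes (1)$\iff$(2).

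For the remaining equivalences, (3)$\iff$(4) is immediate from Proposition \ref{snc}. For (3)$\Rightarrow$(2), if $\textup{Im}\,c$ is at most one-dimensional, say $\textup{Im}\,c=\langle p_i\rangle$ with $g(E_i)>0$, then by Proposition \ref{cl}, $p_i\in\textup{Im}\,cl$ would force $b_i=0$, contradicting $b_i>0$. For the contrapositive of (2)$\Rightarrow$(3), if two distinct generators $p_i,p_j\in\textup{Im}\,c$ exist, then $b_jp_i-b_ip_j$ is a non-zero element of $\textup{Im}\,c$ whose coefficient vector $(c_1,\dots,c_k)$ satisfies $\sum_l b_lc_l=b_ib_j-b_jb_i=0$, hence also lies in $\textup{Im}\,cl$, violating (2).

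The main obstacle will be the dimension count in the second step, where the equality $i^*P_1H^2(S,\BQ)=\textup{Im}\,cl$ needs the full strength of the Hitchin-type hypothesis (to eliminate contributions from $\BH^1(\BA^1,j_*L)$ and from any singular fibers sitting outside the origin); once this identification is in place, the rest of the argument is elementary linear algebra against the explicit descriptions supplied by Propositions \ref{cl} and \ref{snc}.
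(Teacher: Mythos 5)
Your proposal is correct and follows essentially the same route as the paper: use (\ref{432})--(\ref{434}) to reduce (1) to the existence of a line complementary to $P_1H^2(S,\BQ)$ containing the image of the cup product on $H^1$, transport this condition through the restriction isomorphism to the central fiber, identify the restricted $P_1H^2$ with $\operatorname{Im}cl$, and then conclude with the linear algebra supplied by Propositions \ref{cl} and \ref{snc}. The only divergence is minor: where you establish $i^*P_1H^2(S,\BQ)=\operatorname{Im}cl$ by the containment of the component classes plus the dimension count $\dim P_1H^2(S,\BQ)=k-1$ (which does follow from the Hitchin-type hypothesis together with $\operatorname{Gr}^P_2H^2(S,\BQ)\cong\BQ$), the paper instead quotes $P_1H^2(S,\BQ)=\operatorname{Im}\{H^2_c(S,\BQ)\to H^2(S,\BQ)\}$ from \cite{Z1} and reads the identification directly off the diagram (\ref{long}).
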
 

\begin{proof}
(1)$\Leftrightarrow$(2). By (\ref{432})-(\ref{434}), the existence of a strongly multiplicative decomposition associated with $f$ is equivalent to the existence of a class 
\[\Sigma\in H^2(S,\BQ)\setminus P_1H^2(S,\BQ)\]
such that for any $\alpha,\beta\in H^1(S,\BQ)$, $\alpha\cup\beta=k\Sigma$ for some scalar $k$. Since the restriction map 
\[
r:H^*(S,\BQ)\xrightarrow{\sim}H^*(f^{-1}(0),\BQ)
\]
is an isomorphism, the existence of $\Sigma$ is equivalent to the existence of 
$\sigma\in H^2(f^{-1}(0),\BQ)\setminus r(P_1H^2(S,\BQ))$, such that for any $\alpha,\beta\in H^1(f^{-1}(0),\BQ)$, $\alpha\cup\beta=k\sigma$ for some scalar $k$. Since
\[
P_1H^2(S)=\textrm{Im}\{H^2_c(S)\to H^2(S)\},
\]
(\cite[Proposition 3.1]{Z1}) and $H_2(f^{-1}(0),\BQ)=H^2_c(S,\BQ)$ by (\ref{long}), we have
\[
r(P_1H^2(S,\BQ))=\textrm{Im}\,cl.
\]    
Therefore, the existence of such a $\sigma$ is equivalent to $\dim\textrm{Im}\, c=0$, or $\dim\textrm{Im}\, c=1$ and $\textrm{Im}\, c\not\subset\textrm{Im}\, cl$. Since $\dim \textrm{Im}\,cl=k-1$, they are equivalent to the single condition $\textrm{Im}\, c\cap\textrm{Im}\,cl =\{0\}$, as desired.

(2)$\Rightarrow$(3). Since $\dim \textrm{Im}\,cl=k-1$ by Proposition \ref{cl},  $\textrm{Im}\, c\cap\textrm{Im}\,cl =\{0\}$ implies that $\textrm{Im}\, c\le 1$. 

(3)$\Rightarrow$(2). If $\dim\textrm{Im}\, c=0$, then $\textrm{Im}\, c\cap\textrm{Im}\,cl =\{0\}$. If $\dim\textrm{Im}\, c=1$, then by Proposition \ref{snc}, $\textrm{Im}\, c=\langle p_{i_0}\rangle$ for some $1\le i_0\le k$. By Proposition \ref{cl}, $p_i\not\in \textrm{Im}\,cl$ since $b_i>0$ for all $i$. We conclude that $\textrm{Im}\, c\cap\textrm{Im}\,cl =\{0\}$.

(3)$\Leftrightarrow$(4). This follows from Proposition \ref{snc}.
\end{proof}

A direct corollary is the following.
\begin{cor} \label{eis}
All elliptic Hitchin-type fibrations admit strongly multiplicative perverse decompositions.
\end{cor}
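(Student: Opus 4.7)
The plan is to reduce directly to Theorem \ref{star}, whose equivalence (1)$\Leftrightarrow$(4) asserts that a Hitchin-type fibration admits a strongly multiplicative perverse decomposition if and only if the central fiber $f^{-1}(0)$ contains at most one irreducible component of positive geometric genus. Hence the task is to verify this purely geometric condition for an arbitrary elliptic Hitchin-type fibration.

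Since $f\colon S\to \BA^1$ is a proper surjective morphism from a smooth surface to a smooth curve, it is flat, so every fiber has the same arithmetic genus as the general fiber. By hypothesis the general fiber is a smooth elliptic curve, so $p_a(f^{-1}(0))=1$. I would then invoke Kodaira's classification of singular fibers of an elliptic fibration, which can be applied locally near $0\in \BA^1$: after first contracting the $(-1)$-curves contained in $f^{-1}(0)$ (an operation that does not change the count of irreducible components of positive geometric genus, since every $(-1)$-curve is rational), the reduced structure of the resulting central fiber is one of the standard Kodaira types. Explicitly, these are $mI_0$ (a possibly multiple smooth elliptic curve, with exactly one component of positive genus) or one of $I_n$ ($n\ge 1$), $II$, $III$, $IV$, $I_n^\ast$, $II^\ast$, $III^\ast$, $IV^\ast$ (possibly with multiplicities), each of which consists entirely of smooth rational curves.

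In every case the number of irreducible components of $f^{-1}(0)$ with positive geometric genus is at most one. Condition (4) of Theorem \ref{star} is therefore satisfied, so the equivalence (1)$\Leftrightarrow$(4) produces the required strongly multiplicative perverse decomposition. I do not anticipate any real obstacle: the only mild point is the contraction of $(-1)$-curves used to reach a minimal local model, which is harmless for our count because exceptional curves are rational.
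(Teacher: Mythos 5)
Your proposal is correct and follows the same route as the paper, whose entire proof is the one-line citation of Theorem \ref{star}, equivalence (1)$\Leftrightarrow$(4). Your additional verification via Kodaira's classification (after contracting $(-1)$-curves, which are rational and so do not affect the count) simply makes explicit the standard geometric fact the paper leaves implicit, namely that a fiber of an elliptic fibration has at most one component of positive geometric genus.
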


\begin{proof}
Follows from Theorem \ref{star}.(1),(4).
\end{proof}

Combining Proposition \ref{7.6}, Proposition \ref{pf2}, Theorem \ref{pie} and Corollary \ref{eis}, we have the following equivalent condition for the multiplicativity of perverse decomposition for Hilbert schemes of the Hitchin-type fibration.

\begin{thm}\label{4.5*}
Let $n\ge2$. Let $f:S\to\BA^1$ be a Hitchin-type fibration. Let
\[
\pi_n:S^{[n]}\to (\BA^1)^{(n)}=\BA^n
\]
be the induced map. Then the following are equivalent
\begin{enumerate}
    \item $f:S\to \BA^1$ is an elliptic fibration.
    \item $\pi_n$ admits a strongly multiplicative perverse decomposition.
    \item The perverse filtration associated with $\pi_n$ is multiplicative.
    \item The canonical class $K_S\in P_1H^2(S,\BQ)$.
    \item $K_S\in G_1H^2(S,\BQ)$ for any perverse decomposition.
\end{enumerate}
\end{thm}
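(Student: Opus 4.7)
The plan is to prove the equivalence by a short cycle of implications, with essentially all the analytic content imported from results established earlier in the paper. I would organize the proof as the cycle $(2) \Rightarrow (3) \Rightarrow (1) \Rightarrow (2)$ together with the side equivalence $(1) \Leftrightarrow (4) \Leftrightarrow (5)$.

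First, $(2) \Rightarrow (3)$ is immediate from the definitions, since the existence of a strongly multiplicative splitting of a filtration forces the filtration itself to be multiplicative. The implication $(3) \Rightarrow (1)$ is exactly the content of Theorem \ref{pie}(2), which applies for all $n \geq 2$. For $(1) \Rightarrow (4)$ I would invoke Proposition \ref{pf2}, whose hypothesis is satisfied since a Hitchin-type fibration is in particular a proper surjective morphism from a smooth quasi-projective surface to a smooth curve.

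The equivalence $(4) \Leftrightarrow (5)$ is where the Hitchin-type hypothesis enters cleanly: from the canonical decomposition (\ref{dec}) one reads off that $P_0 H^2(S,\BQ) = 0$, so for any perverse decomposition $G_\bullet$ associated with $f$ one has $G_0 H^2(S,\BQ) = 0$ and $G_1 H^2(S,\BQ) = P_1 H^2(S,\BQ)$. Consequently $K_S \in P_1 H^2(S,\BQ)$ if and only if $K_S \in G_1 H^2(S,\BQ)$ in every splitting, and the choice of splitting is irrelevant.

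The main implication to close the cycle is $(1) \Rightarrow (2)$, which combines Corollary \ref{eis} with Proposition \ref{7.6}. Assuming $f$ is an elliptic Hitchin-type fibration, Corollary \ref{eis} produces a strongly multiplicative perverse decomposition $G_\bullet H^*(S,\BQ)$. Applying the already-established implication $(1) \Rightarrow (4) \Leftrightarrow (5)$ gives $K_S \in G_1 H^2(S,\BQ)$ for this chosen decomposition. Proposition \ref{7.6} then upgrades this to strong multiplicativity of the induced perverse decomposition $G_\bullet H^*(S^{[n]},\BQ)$, which is condition (2). None of these steps is genuinely difficult once the earlier machinery is available; the role of this theorem is to assemble the equivalences. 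The only subtle point is making sure the Hitchin-type hypothesis is used at the right moment, namely to deduce $(4) \Leftrightarrow (5)$ (via vanishing of $P_0 H^2$) and to supply the strongly multiplicative splitting on $S$ through Corollary \ref{eis}; without these two inputs the argument would not close.
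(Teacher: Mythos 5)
Your proposal is correct and follows essentially the same route as the paper: the same ingredients (Corollary \ref{eis}, Proposition \ref{7.6}, Theorem \ref{pie}, Proposition \ref{pf2}, and the vanishing $P_0H^2(S,\BQ)=0$ for Hitchin-type fibrations) are assembled into a cycle of implications, differing from the paper's arrangement only in bookkeeping. The paper closes the loop as $(1)\Rightarrow(2)\Rightarrow(3)\Rightarrow(4)\Rightarrow(1)$ with $(4)\Leftrightarrow(5)$ on the side, while you use $(2)\Rightarrow(3)\Rightarrow(1)\Rightarrow(2)$ with $(1)\Leftrightarrow(4)\Leftrightarrow(5)$, which is logically equivalent.
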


\begin{proof}
(1)$\Rightarrow$(2). Corollary \ref{eis} implies that $f$ admits a strongly multiplicative perverse decomposition. Proposition \ref{pf2} implies $K_S\in P_1H^2(S,\BQ)$ and hence in $G_1H^2(S,\BQ)$ (since $P_0H^2(S,\BQ)=0$). Then apply Proposition \ref{7.6}. 

(2)$\Rightarrow$(3). This follows from the definition. 

(3)$\Rightarrow$(4). This is Theorem \ref{pie}.(2). 

(4)$\Rightarrow$(1). This is Proposition \ref{pf2}.

(4)$\Leftrightarrow$(5). This follows from $P_0H^2(S,\BQ)=0$.

\end{proof}

\subsection{A direct calculation for $n=2$}
In this section we give a proof of the converse of Theorem \ref{pie} when $n=2$ by a direct calculation via intersection theory. We will see how the canonical class $K_S$ enters the picture from the geometric perspective.

Let $f:S\to C$ be a proper morphism from a smooth quasi-projective surface to a smooth curve. Let $\Delta$ be the diagonal of $S\times S$ and let $E$ be the exceptional divisor in $\textrm{Bl}_\Delta S\times S$. Denote $h:\textrm{Bl}_\Delta S\times S\to C\times C$. We have the following Cartesian diagrams.
\[
\begin{tikzcd}
E\arrow[r,"j"]\arrow[d,"\pi_E"]&\textrm{Bl}_\Delta S\times S\ar[r,"/\mathfrak{S}_2"]\arrow[d,"\pi"]& S^{[2]}\arrow[d]\\
\Delta\arrow[r,"i"]\arrow[d,"f"]&S\times S\arrow[r,"/\mathfrak{S}_2"]\arrow[d,"f\times f"] & S^{(2)}\arrow[d,"f^{(2)}"]\\
C\arrow[r,"i_C"]&C\times C\arrow[r,"/\mathfrak{S}_2"] &C^{(2)}.
\end{tikzcd}
\]

Since the perverse filtration associated with $S^{[2]}\to C^{(2)}$ is the $\BZ/2\BZ$ quotient of the one associated with $h:\textrm{Bl}_\Delta S\times S\to C\times C$, it suffices to show that  $\Fp^f(K_S)\le 1$ implies the multiplicativity of the perverse filtration associated with $h$. To compute the perverse filtration, we first note that 
\[
\BQ_{S\times S}[4]\bigoplus i_*\BQ_\Delta[2]\xrightarrow{\sim} R\pi_*\BQ_{\textrm{Bl}_\Delta S\times S}[4]
\]
is an isomorphism. Pushforward further to $C\times C$, we have
\[
R(f\times f)_*\BQ_{S\times S}[4]\bigoplus i_{C*}Rf_*\BQ_\Delta[2]\xrightarrow{\sim}Rh_*\BQ_{\textrm{Bl}_\Delta S\times S}[4],
\]
So under the isomorphism 
\[
H^*(S\times S,\BQ)\oplus H^*(\Delta,\BQ)[-2]\xrightarrow{(\pi^*,j_*\pi_E^*)} H^*(\textrm{Bl}_\Delta S\times S,\BQ),
\]
the perverse filtrations are identified as 
\begin{equation} \label{f0}
P_k^{f\times f} H^d(S\times S,\BQ)\oplus P_{k-1}^fH^{d-2}(S,\BQ)=P_k^{h}H^d(\textrm{Bl}_\Delta S\times S,\BQ),    
\end{equation}
So $\pi^*$ preserves the perversity and $j_*\pi^*_E$ increases the perversity by 1, \emph{i.e.}
$\Fp^h(\pi^*\alpha)=\Fp^{f\times f}(\alpha)$ and $\Fp^h(j_*\pi^*_E\gamma)=\Fp^f(\gamma)+1$.

Let $\CN=\CN_{\Delta/S\times S}$ be the normal bundle. Then $E=\BP\CN$ is a $\BP^1$-bundle over $\Delta$. Moreover, $\CN\cong T\Delta$. The cup product on $H^*(\textrm{Bl}_\Delta S\times S,\BQ)$ is calculated by
\begin{align}
    \pi^*\alpha\cdot\pi^*\beta&=\pi^*(\alpha\cdot\beta)\label{f1}\\
    \pi^*\alpha\cdot j_*\pi_E^*\gamma&=j_*\pi_E^*(i^*\alpha\cdot\gamma)\label{f2}\\
    j_*\pi_E^*\gamma\cdot j_*\pi_E^*\delta&=-j_*(\pi_E^*\gamma\cdot\pi_E^*\delta\cdot\xi)\label{f3}
\end{align}
where $\xi= c_1(\CO_{\BP\CN}(1))$. We check the multiplicativity for the three types of cup product. \begin{enumerate}
    \item Since the perverse filtration associated with $f$ and $f\times f$ are multiplicative by \cite[Proposition 4.17]{Z} and \cite[Proposition 2.1]{Z}, it follows from (\ref{f0}) and (\ref{f1}) that
\begin{equation}\label{f10}
\begin{split}
\Fp^h(\pi^*(\alpha)\cdot\pi^*(\beta))=\Fp^h(\pi^*(\alpha\cdot\beta))=\Fp^{f\times f}(\alpha\beta)\\\le\Fp^{f\times f}(\alpha)+\Fp^{f\times f}(\beta)=\Fp^{h}(\pi^*\alpha)+\Fp^h(\pi^*\beta).
\end{split}
\end{equation}
\item By (\ref{f0}) and (\ref{f2}), we have 
\begin{equation}\label{f4}
\begin{split}
&\Fp^h(\pi^*(\alpha)\cdot j_*\pi^*_E\gamma)=\Fp^h(j_*\pi_E^*(i^*\alpha\cdot\gamma))\\
&=\Fp^f(i^*\alpha\cdot\gamma)+1\le\Fp^f(i^*\alpha)+\Fp^f(\gamma)+1\\
&\le\Fp^{f\times f}(\alpha)+\Fp^f(\gamma)+1=\Fp^h(\pi^*\alpha)+\Fp^h(j_*\pi^*_E\gamma).
\end{split}
\end{equation}
The second inequality in (\ref{f4}) is because the pullback along the diagonal $i^*(\phi\otimes\psi)=\phi\cdot\psi$ and hence 
\[
\Fp^f(i^*(\phi\otimes\psi))=\Fp^f(\phi\cdot\psi)\le\Fp^f(\phi_i)+\Fp^f(\psi_i)=\Fp^{f\times f}(\phi\otimes\psi)
\]
for all classes $\phi,\psi$.
\item We need to calculate $\Fp^h(j_*(\pi^*_E(\gamma\delta)\cdot\xi))$. Let $\CQ$ be the universal quotient bundle on $X$. Then we have an exact sequence
\[
0\to \CO_{\BP\CN}(-1)\to \pi^*_E\CN\to \CQ\to0.
\]
So $(1-\xi)c(\CQ)=\pi^*_Ec(\CN)$ and hence 
\begin{equation}\label{f5}
    c_1(\CQ)=\pi_E^*c_1(\CN)+\xi=\pi_E^*c_1(T\Delta)+\xi=-\pi_E^*K_\Delta+\xi
\end{equation}
By \cite[Theorem 13.14]{3264}, we have $\pi^*i_*(\gamma\delta)=j_*(c_1(\CQ)\pi^*_E(\gamma\delta))$. So we have $\pi^*i_*(\gamma\delta)=j_*((\xi-\pi^*_EK_\Delta)\pi^*_E(\gamma\delta))$ and hence
\begin{equation}\label{f6}
j_*(\pi^*_E(\gamma\delta)\cdot\xi)=\pi^*i_*(\gamma\delta)+j_*\pi^*_E(\gamma\delta K_\Delta).
\end{equation}
Since $i_*=(\textrm{id},\iota)\circ\Delta_+$, we have 
\begin{equation}\label{f7}
\Fp^h(\pi^*i_*(\gamma\delta))=\Fp^{f\times f}(i_*(\gamma\delta))\le\Fp^{f}(\gamma\delta)+2\le \Fp^{f}(\gamma)+\Fp^{f}(\delta)+2
\end{equation}
by Remark \ref{G} and
\begin{equation}\label{f8}
\Fp^h(j_*\pi^*_E(\gamma\delta K_\Delta))=\Fp^f(\gamma\delta K_\Delta)+1\le \Fp^f(\gamma)+\Fp^f(\delta)+ \Fp^f(K_\Delta)+1.
\end{equation}
If $\Fp^f(K_\Delta)\le1$, then (\ref{f3}), (\ref{f6}), (\ref{f7}) and (\ref{f8}) imply
\begin{equation}\label{f9}
\begin{split}
\Fp^h(j_*\pi_E^*\gamma\cdot j_*\pi_E^*\delta)=\Fp^h(j_*(\pi_E^*(\gamma\delta)\cdot\xi))\\
\le \Fp^{f}(\gamma)+\Fp^{f}(\delta)+2=\Fp^h(j_*\pi_E^*\gamma)+\Fp^h (j_*\pi_E^*\delta).
\end{split}
\end{equation}
\end{enumerate}

Combining Proposition \ref{pf2}, (\ref{f10}), (\ref{f4}) and (\ref{f9}), we have the converse of Theorem \ref{pie} for $n=2$.
\begin{thm} \label{n=2}
Let $f:S\to C$ be a proper surjective map from a smooth quasi-projective surface to a smooth curve. The perverse filtration associated with $\pi_2:S^{[2]}\to C^{(2)}$ is multiplicative if and only if $f$ is an elliptic fibration.
\end{thm}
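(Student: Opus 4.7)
The plan is to handle the two directions separately. The forward implication ($\Rightarrow$) is already an immediate consequence of Theorem \ref{pie}(2) combined with Proposition \ref{pf2}, so the entire substance lies in the converse: assuming $f$ is elliptic (equivalently, $\Fp^f(K_S)\le 1$), I want to prove that the perverse filtration on $H^*(S^{[2]},\BQ)$ associated with $\pi_2$ is multiplicative. The natural strategy is to pass to the $\mathfrak{S}_2$-cover. There is a map $\pi:\textrm{Bl}_\Delta(S\times S)\to S^{[2]}$ of degree 2, and the perverse filtration for $\pi_2$ is recovered as the $\mathfrak{S}_2$-invariant part of the perverse filtration for $h:\textrm{Bl}_\Delta(S\times S)\to C\times C$. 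So it suffices to establish multiplicativity for $h$.

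Using the blow-up formula $\BQ_{S\times S}[4]\oplus i_*\BQ_\Delta[2]\xrightarrow{\sim} R\pi_*\BQ_{\textrm{Bl}_\Delta S\times S}[4]$ and pushing forward to $C\times C$, the cohomology splits as $\pi^*H^*(S\times S)\oplus j_*\pi_E^*H^*(\Delta)[-2]$, with $\pi^*$ preserving perversity and $j_*\pi_E^*$ increasing it by $+1$. Cup products fall into three types and I would check them in turn. For $\pi^*\alpha\cdot\pi^*\beta=\pi^*(\alpha\beta)$, multiplicativity follows from the multiplicativity of the perverse filtration on $S\times S$, which in turn comes from Proposition \ref{pro} for $f$ via the K\"unneth formula. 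For $\pi^*\alpha\cdot j_*\pi_E^*\gamma=j_*\pi_E^*(i^*\alpha\cdot\gamma)$ (projection formula), one uses that pullback along the diagonal satisfies $\Fp^f(i^*\alpha)\le\Fp^{f\times f}(\alpha)$, again by Proposition \ref{pro} applied to factors.

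The essential case is $j_*\pi_E^*\gamma\cdot j_*\pi_E^*\delta=-j_*(\pi_E^*(\gamma\delta)\cdot\xi)$ with $\xi=c_1(\CO_{\BP\CN}(1))$, and here the canonical class appears. On $E=\BP\CN$ the universal quotient sequence $0\to\CO(-1)\to\pi_E^*\CN\to\CQ\to0$ gives $c_1(\CQ)=\xi-\pi_E^*K_\Delta$ (using $\CN\cong T\Delta$). Combining this with the excess intersection identity $\pi^*i_*(\gamma\delta)=j_*\bigl(c_1(\CQ)\pi_E^*(\gamma\delta)\bigr)$ yields the decomposition
\[
j_*(\pi_E^*(\gamma\delta)\cdot\xi)=\pi^*i_*(\gamma\delta)+j_*\pi_E^*(\gamma\delta K_\Delta).
\]
The first term has perversity bounded by $\Fp^f(\gamma)+\Fp^f(\delta)+2$ because $i_*$ is the Gysin map of a codimension-2 diagonal (its perversity cost is controlled by the duality built into the $G$-decomposition, or concretely by $\Delta_+$). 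The second term contributes $\Fp^f(\gamma)+\Fp^f(\delta)+\Fp^f(K_\Delta)+1$, and the desired bound $\Fp^f(\gamma)+\Fp^f(\delta)+2$ requires precisely $\Fp^f(K_\Delta)\le 1$. Via the identification $\Delta\cong S$ and Proposition \ref{pf2}, this is exactly the ellipticity of $f$.

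The main obstacle is the third case: one has to geometrically identify the self-intersection of the exceptional class with a combination that involves $K_\Delta$, and show that the $K_\Delta$-correction is the \emph{only} place where the hypothesis is needed. The rest is an organised bookkeeping of perversities using the splitting of the blow-up cohomology, Proposition \ref{pro}, and the dual-decomposition description of $i_*$.
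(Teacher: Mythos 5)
Your proposal is correct and follows essentially the same route as the paper: reduce to the $\mathfrak{S}_2$-cover $h:\mathrm{Bl}_\Delta(S\times S)\to C\times C$, split the cohomology via the blow-up formula, check the three types of cup products, and isolate the canonical class through the excess intersection identity $\pi^*i_*(\gamma\delta)=j_*\bigl(c_1(\CQ)\,\pi_E^*(\gamma\delta)\bigr)$ with $c_1(\CQ)=\xi-\pi_E^*K_\Delta$, so that ellipticity (i.e.\ $\Fp^f(K_S)\le1$ by Proposition \ref{pf2}) is exactly what controls the exceptional self-intersection term. This matches the paper's argument in every essential step.
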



\begin{thebibliography}{99}
\bibitem{BHPV}
W. P. Barth, K. Hulek, C. A. M. Peters, A. Van de Ven, \emph{Compact complex surfaces}, Second edition. Ergebnisse der Mathematik und ihrer Grenzgebiete. 3. Folge. 4. Springer-Verlag, Berlin, 2004.
\bibitem{BBD}
A. A. Beilinson, J. Bernstein, P. Deligne, {\em Faisceaux pervers,} in Analysis and Topology
on Singular Spaces, I (Luminy, 1981), Ast\'erisque 100, Soc. Math. France, Paris, 1982,
pp. 5-171.
\bibitem{dCHM}
M. de Cataldo, T. Hausel, L. Migliorini, {\em Topology of Hitchin systems and Hodge theory of character varieties: the case $A1$,} Annals of Mathematics 175 (2012), 1329--1407.
\bibitem{dCMS}
M. de Cataldo, D. Maulik, J. Shen, {\em Hitchin fibrations, abelian surfaces, and the P=W conjecture,} arXiv:1909.11885.
\bibitem{dCM1}
M. A. A. de Cataldo, L. Migliorini, {\em Intersection forms, topology of maps and motivic
decomposition for resolutions of threefolds}, Algebraic cycles and motives. Vol. 1, 102-137,
London Math. Soc. Lecture Note Ser., 343, Cambridge Univ. Press, Cambridge, 2007.
\bibitem{3264}
D. Eisenbud, J. Harris, {\em 3264 and all that--a second course in algebraic geometry}, Cambridge University Press, Cambridge, 2016. xiv+616 pp. ISBN: 978-1-107-60272-4; 978-1-107-01708-5.
\bibitem{GS}
L. G\"ottsche, W. S\"orgel, {\em Perverse sheaves and the cohomology of Hilbert schemes of
smooth algebraic surfaces,} Math. Ann. 296 (1993), 235--245.
surface, Math. Ann. 286 (1990), 193--207.
\bibitem{Hotta}
R. Hotta, K. Takeuchi, T. Tanisaki, {\em $D$-modules, perverse sheaves, and representation theory}, Translated from the 1995 Japanese edition by Takeuchi. Progress in Mathematics, 236. Birkh\"auser Boston, Inc., Boston, MA, 2008. xii+407 pp. ISBN: 978-0-8176-4363-8
\bibitem{L}
M. Lehn, {\em Chern classes of tautological sheaves on Hilbert schemes of points on surfaces,} Inventiones Mathematicae, 1999, 136(1):157--207.
\bibitem{LS}
M. Lehn, C. Sorger, {\em The cup product of the Hilbert scheme for K3 surfaces}, Inventiones Mathematicae May 2003, Volume 152, Issue 2, pp 305--329.
\bibitem{LQW}
W. Li, Z. Qin, W, Wang, {\em Vertex algebras and the cohomology ring structure of Hilbert
schemes of points on surfaces,} Math. Ann. 324 (2002), 105--133.
\bibitem{N}
H. Nakajima, {\em Heisenberg algebra and Hilbert schemes of points on projective surfaces,} Ann. of Math. (2) 145 (1997), no. 2, 379--388.
\bibitem{N1}
M. A. Nieper-Wi{\ss}kirchen, {\em Twisted cohomology of the Hilbert schemes of points on surfaces}, Doc. Math. 14 (2009), 749--770.
\bibitem{S}
C. Simpson {\em Higgs bundles and local systems,} Publ. Math. Inst. Hautes Etudes
Sci. 75 (1992) 5--95.
\bibitem{SZ}
J. Shen, Z. Zhang, {\em Perverse filtrations, Hilbert schemes, and the $P=W$ conjecture for parabolic Higgs bundles,} arXiv:1810.05330.
\bibitem{Z}
Z. Zhang, {\em Multiplicativity of perverse filtration for Hilbert schemes of fibered surfaces,}
Adv. Math. 312 (2017) 636--679.
\bibitem{Z1}
Z. Zhang, {\em The $P=W$ identity for cluster varieties}. arXiv:1903.07014, to appear in Math. Res. Lett.

\end{thebibliography}
\end{document}